\author{Mark Weber}
\thanks{}
\address{Department of Mathematics, Macquarie University}
\title{Operads as polynomial 2-monads}
\keywords{operads; polynomial functors}
\newtheorem{thm}{\bf Theorem}
\newtheorem{prop}{\bf Proposition}
\newtheorem{lem}{\bf Lemma}
\newtheorem{cor}{\bf Corollary}
\newcommand{\tn}[1]{\textnormal{#1}}
\newcommand{\tnb}[1]{\textnormal{\bf #1}}
\newcommand{\tensor}{\otimes}
\newcommand{\N}{\mathbb{N}}
\newcommand{\comp}{\circ}
\newcommand{\id}{\tn{id}}
\newcommand{\ca}[1]{\mathcal{#1}}
\newcommand{\ladj}{\dashv}
\newcommand{\iso}{\cong}
\newcommand{\Set}{\tnb{Set}}
\newcommand{\Cat}{\tnb{Cat}}
\newcommand{\CAT}{\tnb{CAT}}
\renewcommand{\implies}{\Rightarrow}
\newcommand{\op}{\tn{op}}
\newcommand{\TwoCAT}{\mathbf{2}{\tnb{-CAT}}}
\newcommand{\TwoCat}{\mathbf{2}{\tnb{-Cat}}}
\newcommand{\Span}[1]{{\tnb{Span}_{#1}}}
\newcommand{\Polyc}[1]{{\tnb{Poly}_{#1}}}
\newcommand{\PFun}[1]{{\tnb{P}_{#1}}}
\newcommand{\LaxAlg}[1]{\tn{Lax-}{#1}\tn{-Alg}}
\newcommand{\PsAlg}[1]{\tn{Ps-}{#1}\tn{-Alg}}
\newcommand{\PsAlgs}[1]{\tn{Ps-}{#1}\tn{-Alg}_{\tn{s}}}
\newcommand{\PsAlgl}[1]{\tn{Ps-}{#1}\tn{-Alg}_{\tn{l}}}
\newcommand{\Algl}[1]{{#1}\tn{-Alg}_{\tn{l}}}
\newcommand{\Alg}[1]{{#1}\tn{-Alg}}
\newcommand{\Algs}[1]{{#1}\tn{-Alg}_{\tn{s}}}
\newcommand{\Opd}{\tnb{Opd}}
\newcommand{\Coll}{\tnb{Coll}}
\newcommand{\PolyMnd}[1]{\tnb{PolyMnd}_{#1}}
\newcommand{\PolyEnd}[1]{\tnb{PolyEnd}_{#1}}
\renewcommand{\P}{\mathbb{P}}
\newcommand{\sm}{\tnb{sm}}
\newcommand{\CatAsOp}{\underline{\Cat}}
\newcommand{\Com}{\mathsf{Com}}
\newcommand{\Ass}{\mathsf{Ass}}
\newcommand{\SMCMnd}{\tnb{S}}
\newcommand{\MCMnd}{\tnb{M}}
\begin{document}

\maketitle

\begin{abstract}
In this article we give a construction of a polynomial 2-monad from an operad and describe the algebras of the 2-monads which then arise. This construction is different from the standard construction of a monad from an operad in that the algebras of our associated 2-monad are the categorified algebras of the original operad. Moreover it enables us to characterise operads as categorical polynomial monads in a canonical way. This point of view reveals categorical polynomial monads as a unifying environment for operads, Cat-operads and clubs. We recover the standard construction of a monad from an operad in a 2-categorical way from our associated 2-monad as a coidentifier of 2-monads, and understand the algebras of both as weak morphisms of operads into a Cat-operad of categories. Algebras of operads within general symmetric monoidal categories arise from our new associated 2-monad in a canonical way. When the operad is sigma-free, we establish a Quillen equivalence, with respect to the model structures on algebras of 2-monads found by Lack, between the strict algebras of our associated 2-monad, and those of the standard one.
\end{abstract}


\section{Introduction}
\label{sec:intro}

In contemporary mathematics there has been a proliferation of operadic notions \cite{LodayVallette-AlgebraicOperads}. These include cyclic operads, modular operads, dioperads, properads and so on, with the basic combinatorics underpinning these notions being more involved than that of the standard operads that arose originally in algebraic topology in the 1970's. One of the many contributions of Batanin and Berger in \cite{BataninBerger-HtyThyOfAlgOfPolyMnd}, is to exhibit these contemporary operadic notions as algebras of very particular standard operads. In this article we use unadorned name ``operad'' for what are commonly referred to as ``coloured symmetric operads of sets'' or also as ``symmetric multicategories''. More precisely, Batanin and Berger exhibited many of these contemporary operadic notions as algebras of $\Sigma$-free operads, an operad being \emph{$\Sigma$-free} when its symmetric group actions admit no fixed points.

Given an operad $T$ with set of colours $I$, and a symmetric monoidal category $\ca V$, one can consider the algebras of $T$ in $\ca V$. For nice enough $\ca V$ one has an associated monad whose algebras are coincide with those of the operad. When $\ca V = \Set$ this is a monad on $\Set/I$ which we denote as $T/\Sigma$. When $T$ is $\Sigma$-free, $T/\Sigma$ is a polynomial monad in the sense of \cite{GambinoKock-PolynomialFunctors}. In fact as explained by Kock \cite{KockJ-PolyFunTrees} and Szawiel-Zawadowski \cite{SzawielZawadowski-TheoriesOfAnalyticMonads}, finitary polynomial monads may be identified with $\Sigma$-free operads. In general, polynomial monads are examples of cartesian monads, so may be applied to internal categories, and in this way one may regard $T/\Sigma$ as a 2-monad on $\Cat/I$. Thus any contemporary operadic notion determines a 2-monad, and so the rich theory of 2-dimensional monad theory \cite{BWellKellyPower-2DMndThy} becomes applicable in these contexts. This is part of the technology which underpins \cite{BataninBerger-HtyThyOfAlgOfPolyMnd}, and which is developed further in this paper and \cite{Weber-AlgKan, Weber-CodescCrIntCat}.

In this article we give a new and different construction of a polynomial 2-monad from an operad and describe the algebras of the 2-monads which then arise. Our construction does not require $\Sigma$-freeness, and in the case of a $\Sigma$-free operad $T$ with set of colours $I$, produces a 2-monad on $\Cat/I$ which is different from $T/\Sigma$. In the general case we give an alternative 2-categorical construction of $T/\Sigma$ from $T$, and then establish that this construction restricts to the world of polynomial 2-monads in the $\Sigma$-free case.

This new 2-monad on $\Cat/I$ associated to an operad $T$ is also denoted as $T$. We find this convention to be most convenient, but when using it one must be aware that the conventional $\Cat$-valued algebras of the operad $T$ are the strict algebras of the 2-monad $T/\Sigma$, whereas the algebras of the 2-monad $T$ correspond to categorified algebras of the operad $T$. For example when $T$ is the terminal operad $\Com$, a strict algebra for the 2-monad $T$ is a symmetric strict monoidal category, whereas a strict algebra for $T/\Sigma$ is a commutative monoid in $\Cat$. 

From Corollary \ref{cor:general-operad-algebra}, the algebras of the operad within any symmetric monoidal category admit a canonical description in terms of this new 2-monad, which in \cite{Weber-CodescCrIntCat} enables the construction of the associated PROP in terms of general notions of 2-dimensional monad theory. This is then exploited in \cite{Weber-AlgKan} to systematise various related free constructions, whose combinatorial aspects might in some cases be quite involved, in terms of the universal properties that the associated PROP's enjoy by virtue of the developments of \cite{Weber-CodescCrIntCat}. With the more fundamental role of our associated 2-monad thus established, we feel justified in giving it the same name as the original operad.

Our new associated 2-monad arises naturally from a new characterisation of operads and related notions in terms of polynomials \cite{GambinoKock-PolynomialFunctors, Weber-PolynomialFunctors} in $\Cat$. In a sense, the spirit is quite similar to \cite{KockJ-DataTypesSymmetries} in that the symmetries of the operad are encoded directly in the polynomial, but the formal setting is quite different. Recall that a \emph{polynomial monad} in $\Cat$ is a monad in a certain bicategory $\Polyc{\Cat}$ whose objects are categories, the underlying endomorphism $I \to I$ of which is a diagram as on the left
\[ \xygraph{{\xybox{\xygraph{{I}="p0" [r] {E}="p1" [r] {B}="p2" [r] {I}="p3" "p0":@{<-}"p1"^-{s}:"p2"^-{p}:"p3"^-{t}}}}
[r(5)]
{\xybox{\xygraph{{1}="p0" [r] {\mathbb{P}_*}="p1" [r] {\mathbb{P}}="p2" [r] {1}="p3" "p0":@{<-}"p1"^-{}:"p2"^-{U^{\mathbb{P}}}:"p3"^-{t}}}}} \]
in which $p$ is an exponentiable functor. In particular one has the polynomial monad $\SMCMnd$ indicated on the right in which $\mathbb{P}$ is a skeleton of the category of finite sets and bijections, and a \emph{morphism} from the former to the latter consists of the functors $e$ and $b$ fitting into a commutative diagram
\begin{equation}\label{eq:for-organisational-scheme}
\begin{gathered}
\xygraph{!{0;(1.5,0):(0,.6667)::} {I}="p0" [r] {E}="p1" [r] {B}="p2" [r] {I}="p3" [d] {1}="p4" [l] {\mathbb{P}}="p5" [l] {\mathbb{P_*}}="p6" [l] {1}="p7" "p0":@{<-}"p1"^-{s}:"p2"^-{p}:"p3"^-{t}:"p4"^-{}:@{<-}"p5"^-{}:@{<-}"p6"^-{U^{\mathbb{P}}}:"p7"^-{}:@{<-}"p0"^-{} "p1":"p6"_-{e} "p2":"p5"^-{b} "p1":@{}"p5"|-{\tn{pb}}}
\end{gathered}
\end{equation}
and are compatible with the monad structures. By Theorem \ref{thm:SMultiCatAsPolyMndCat}, Remarks \ref{rem:clubs} and \ref{rem:Cat-operads}, and Proposition \ref{prop:Sigma-freeness-poly} one may identify
\begin{itemize}
\item Operads as situations (\ref{eq:for-organisational-scheme}) in which $I$ is discrete and $b$ is a discrete fibration.
\item $\Sigma$-free operads as situations (\ref{eq:for-organisational-scheme}) in which $I$ is discrete, $b$ is a discrete fibration and $B$ is equivalent to a discrete category.
\item $\Cat$-operads as situations (\ref{eq:for-organisational-scheme}) in which $I$ is discrete and $b$ has the structure of a cloven split fibration.
\item Clubs in the sense of Kelly \cite{Kelly-ClubsDoctrines, Kelly-ClubsDataTypeConstructors} as situations (\ref{eq:for-organisational-scheme}) in which $I = 1$.
\end{itemize}
Our construction of a 2-monad from an operad regards an operad $T$ in this way in which $I$ is the set of colours, and then the usual construction \cite{GambinoKock-PolynomialFunctors, Weber-PolynomialFunctors} of a polynomial functor from a polynomial gives a 2-monad on $\Cat/I$. The basic theory of polynomials and polynomial functors from \cite{GambinoKock-PolynomialFunctors, Weber-PolynomialFunctors} is recalled in Section \ref{sec:poly}.

Like any 2-monad, $T$ has different types of algebra (lax, pseudo and strict), different types of algebra morphism (lax, colax, pseudo and strict) and thus a variety of different 2-categories of algebras depending on which types of algebras and algebra morphisms one is interested in. By Theorem \ref{thm:categorical-algebras-of-operads} $T$-algebras admit an explicit description as the appopriately weak morphisms of operads $T \to \CatAsOp$, where $\CatAsOp$ is a canonical $\Cat$-enriched operad whose objects are categories. That is, the lax, pseudo and strict algebras of the 2-monad $T$ are lax, pseudo and strict morphisms of operads $T \to \CatAsOp$ in the sense of Definitions \ref{defn:lax-morphism-into-CatAsOp} and \ref{defn:lax-morphism-into-CatAsOp-variants}. In the case where $T$ is a category, that is when all its operations are of arity $1$, this description of the algebras of $T$ is well-known and goes back at least to \cite{BWellKellyPower-2DMndThy}. In this case the 2-monad $T$ is the 2-monad on $\Cat/I$ whose 2-category of strict algebras and strict morphisms is the functor 2-category $[T,\Cat]$, and a lax or pseudo algebra is exactly a lax or pseudo functor $T \to \Cat$.

Similarly one has characterisations of the various types of $T$-algebra morphism in Theorem \ref{thm:catalg-morphisms-for-operads} and $T$-algebra 2-cells in Theorem \ref{thm:catalg-2cells-for-operads}. In particular for any operad $T$ and any symmetric monoidal category $\ca V$, algebras of $T$ in $\ca V$ can be seen as lax $T$-algebra morphisms in a canonical way. As explained at the end of Section \ref{sec:Algebras} the via the central examples of \cite{BataninBerger-HtyThyOfAlgOfPolyMnd}, one exhibits the categories cyclic operads, modular operads, dioperads, properads and so on, in a symmetric monoidal category, in this way.

To understand the relationship between the 2-monads $T$ and $T/\Sigma$ on $\Cat/I$ for a given operad $T$ with set of colours $I$, one begins by thinking about the algebras. One feature of the notion of lax, pseudo or strict morphism $H : T \to \CatAsOp$ alluded to above is that $H$ is not equivariant in the strictest sense, but rather that it is so up to coherent isomorphisms which are called the \emph{symmetries} of $H$. When these symmetries are identities, the lax morphism is said to be \emph{commutative}, and it is the commutative strict morphisms $T \to \CatAsOp$ which correspond to the algebras of $T/\Sigma$. In other words strict $T/\Sigma$-algebras are included amongst strict $T$-algebras, and this inclusion is exactly the inclusion of the commutative strict morphisms $T \to \CatAsOp$ amongst the general strict morphisms.

The standard construction of the monad $T/\Sigma$ is via a formula which involves quotienting out by the symmetric group actions of $T$. In this article this quotienting is carried in a 2-categorical way. Starting from the 2-monad $T$, Definition \ref{defn:T1-Sigma} provides a 2-cell of 2-monads $\alpha_T$ as in
\[ \xygraph{!{0;(1.5,0):(0,1)::} {T^{[1]}_{\Sigma}}="p0" [r] {T}="p1" [r] {T/\Sigma}="p2" "p0":@<1.5ex>"p1"|(.45){}="t"^-{d_T}:"p2"^-{q_T} "p0":@<-1.5ex>"p1"|(.45){}="b"_-{c_T} "t":@{}"b"|(.15){}="d"|(.85){}="c" "d":@{=>}"c"^-{\alpha_T}} \]
and then $q_T$ is the universal morphism of 2-monads which post-composes with $\alpha_T$ to give an identity. In the language of 2-category theory, $q_T$ is the \emph{reflexive coidentifier} of $\alpha_T$ in $\tn{Mnd}(\Cat/I)$. The algebras of $T/\Sigma$ defined in this way are identified with commutative operad morphisms $T \to \CatAsOp$ in Theorem \ref{thm:commutative-algebras-of-operads}, and so it follows immediately that our construction of $T/\Sigma$ coincides with the standard one. Moreover as explained in Section \ref{sec:sigma-free}, when $T$ is a $\Sigma$-free operad one can witness this quotienting process as taking place completely in the world of polynomials, and this is why $T/\Sigma$ is a polynomial monad when $T$ is $\Sigma$-free.

In \cite{Lack-HomotopyAspects2Monads} a Quillen model structure on the 2-category $\Algs T$ of strict $T$-algebras and strict morphisms was exhibited, for any 2-monad $T$ on a 2-category $\ca K$ with finite limits and finite colimits. For an operad $T$, the morphism $q_T$ of 2-monads described above determines an adjunction $C_T \ladj \overline{q}_T$ between $\Algs T$ and $\Algs {T/\Sigma}$. With respect to the model structures of \cite{Lack-HomotopyAspects2Monads}, $C_T \ladj \overline{q}_T$ is a Quillen adjunction. Our final result, Theorem \ref{thm:Sigma-free-Quillen-equivalence}, says that when $T$ is $\Sigma$-free, $C_T \ladj \overline{q}_T$ is a Quillen equivalence.

\emph{Notation, terminology and background}. We assume a basic familiarity with some of the elementary notions of 2-category theory -- basic 2-categorical limits such as cotensors, comma objects and isocomma objects; the calculus of mates as explained in \cite{KellyStreet-ElementsOf2Cats}; and the basic notions 2-dimensional monad theory which one can find for instance in \cite{BWellKellyPower-2DMndThy, Lack-Codescent}. This article is a sequel to \cite{Weber-PolynomialFunctors}, and so one can find an exposition of many background notions relevant here, such as fibrations and their definition internal to any 2-category, in \cite{Weber-PolynomialFunctors} in a way that is notationally and terminologically compatible with this article. However some effort has been made to recall important background as needed for the covenience of the reader. For instance one finds the definition of the various types of algebra of a 2-monad recalled in Section \ref{sec:Algebras} just before the details of these definitions are needed in this work. As in \cite{Weber-PolynomialFunctors} we denote by $[n]$ the ordinal $\{0 < ... < n\}$ regarded as a category. We denote by $\Cat$ the 2-category of small categories, and sometimes make use of a 2-category $\CAT$ of large categories, which include standard categories of interest, like $\Set$ as objects.

{\bf Acknowledgements}. The author would like to acknowledge Michael Batanin, Richard Garner, Steve Lack and Ross Street for interesting discussions on the subject of this paper. Thanks also go to the attentive anonymous referee, whose remarks helped improve the presentation of this article. The author would also like to acknowledge the financial support of the Australian Research Council grant No. DP130101172.

\section{Polynomial functors}
\label{sec:poly}

Composition with a functor $f : X \to Y$ defines the effect on objects of the 2-functor $\Sigma_f : \Cat/X \to \Cat/Y$, and its right adjoint denoted $\Delta_f$, is given on objects by pulling back along $f$. When $\Delta_f$ has a right adjoint, $f$ is said to be \emph{exponentiable} and this further right adjoint is denoted $\Pi_f$. The exponentiable functors are closed under composition, and stable by pullback along arbitrary functors. Moreover one has a combinatorial characterisation of exponentiable functors as Giraud-Conduch\'{e} fibrations \cite{Conduche-GCFibrations, Giraud-Descent, Street-Powerful}. In particular Grothendieck fibrations and Grothendieck opfibrations are Giraud-Conduch\'{e} fibrations.

In elementary terms, the effect of $\Pi_f$ on objects is to take distributivity pullbacks along $f$ in the sense to be recalled now from \cite{Weber-PolynomialFunctors}. Given $g : W \to X$, a \emph{pullback around} $(f,g)$ consists of $(p,q,r)$ as on the left in
\[ \xygraph{{\xybox{\xygraph{!{0;(1,0):(0,.75)::} {P}="p0" [r] {Q}="p1" [d(2)] {Y}="p2" [l] {X}="p3" [u] {W}="p4" "p0":"p1"^-{q}:"p2"^-{r}:@{<-}"p3"^-{f}:@{<-}"p4"^-{g}:@{<-}"p0"^-{p}:@{}"p2"|-{\tn{pb}}}}}
[r(3)]
{\xybox{\xygraph{!{0;(1,0):(0,.75)::} {P}="p0" [r] {Q}="p1" [d(2)] {Y}="p2" [l] {X}="p3" [u] {W}="p4" "p0":"p1"^-{q}:"p2"^-{r}:@{<-}"p3"^-{f}:@{<-}"p4"^-{g}:@{<-}"p0"^-{p}:@{}"p2"|-{\tn{dpb}}}}}
[r(3)]
{\xybox{\xygraph{!{0;(1,0):(0,.75)::} {P}="p0" [r] {Q}="p1" [d(2)] {Y}="p2" [l] {X}="p3" [u] {W}="p4" "p0":"p1"^-{}:"p2"^-{\Pi_f(g)}:@{<-}"p3"^-{f}:@{<-}"p4"^-{g}:@{<-}"p0"^-{\varepsilon^f_g}:@{}"p2"|-{\tn{dpb}} "p0":@/_{1.75pc}/"p3"_-{\Delta_f\Pi_f(g)}}}}} \]
such that the morphisms $(gp,f,r,q)$ form a pullback square as indicated. A morphism $(p,q,r) \to (p',q',r')$ consists of morphisms $(\alpha,\beta)$ such that $p = p'\alpha$, $\beta q = q' \alpha$ and $r = r'\beta$. A \emph{distributivity pullback} around $(f,g)$ is a terminal object in the category of pullbacks around $(f,g)$ just described. A general such is denoted as in the middle of the previous display. The connection with $\Pi_f$ is indicated on the right in the previous display, in which $\varepsilon^f$ is the counit of $\Delta_f \ladj \Pi_f$ and $\varepsilon^f_g$ is its component at $g$. Explicitly, the universal property of a distributivity pullback says that given $(p',q',r')$ as in
\[ \xygraph{{P}="p0" [r] {W}="p1" [r] {X}="p2" [d] {Y}="p3" [l(2)] {Q}="p4" "p0":"p1"^-{p}:"p2"^-{g}:"p3"^-{f}:@{<-}"p4"^-{r}:@{<-}"p0"^-{q}:@{}"p3"|-{\tn{dpb}}
"p0" [l] {A}="q0" "p4" [l] {B}="q1" "q0"(:@/^{1.5pc}/"p1"^-{p'},:"q1"_-{q'}:@/_{1.5pc}/"p3"_-{r'})
"q0":@{.>}"p0"^-{\alpha} "q1":@{.>}"p4"^-{\beta}} \]
making the square with boundary $(gp',f,r',q')$ a pullback, there exist $\alpha$ and $\beta$ as shown unique such that $p\alpha = p'$, $q\alpha = \beta q'$ and $r\beta = r'$.

In practise one is often interested in obtaining an explicit description of $Q$ and $r$ in terms of the generating data $(f,g)$ of a distributivity pullback. We do this now in the case where $f$ is a discrete opfibration. To this end recall that for a discrete opfibration $f : X \to Y$ one has the corresponding functor $\tilde{f} : Y \to \Set$ whose effect on objects is $y \mapsto f^{-1}\{y\}$, and whose lax colimit is $X$. The data of the lax colimit cocone consists of the inclusions of fibres $i_y : f^{-1}\{y\} \to X$ for all $x \in X$, and lax naturality 2-cells $i_{\alpha} : i_{y_1} \to i_{y_2}\tilde{f}(\alpha)$ for all $\alpha : y_1 \to y_2$ in $Y$. Another way to organise this information uses the $\tn{Fam}$ construction. For a category $C$, the category $\tn{Fam}(C)$ has as objects pairs $(I,h)$ where $I$ is a set regarded as a discrete category, and $h : I \to C$ is a functor. A morphism $(I,h) \to (J,k)$ consists of $(f,\phi)$ where $f : I \to J$ is a function and $\phi : h \to kf$ is a natural transformation. Then the fibres of $f$ and the above lax colimit cocone organise to form a functor{\footnotemark{\footnotetext{As explained in section 5 of \cite{Weber-Fam2fun}, $\overline{f}$ is the $\tn{Fam}$-generic factorisation of $\tilde{f} : Y \to \Set = \tn{Fam}(1)$.}}}
\[ \begin{array}{lcccr} {\overline{f} : Y \longrightarrow \tn{Fam}(X)} && {y \mapsto (f^{-1}\{y\},i_y)} && {\alpha:y_1 \to y_2 \, \mapsto \, (\tilde{f}(\alpha),i_{\alpha}).} \end{array} \]
\begin{lem}\label{lem:Cat-dpb-along-dopfib-fam}
In a distributivity pullback as on the left
\[ \xygraph{{\xybox{\xygraph{{P}="p0" [r] {W}="p1" [r] {X}="p2" [d] {Y}="p3" [l(2)] {Q}="p4" "p0":"p1"^-{}:"p2"^-{g}:"p3"^-{f}:@{<-}"p4"^-{r}:@{<-}"p0"^-{}:@{}"p3"|-{\tn{dpb}}}}}
[r(4)d(.1)]
{\xybox{\xygraph{!{0;(1.5,0):(0,.6667)::} {Q}="p0" [r] {\tn{Fam}(W)}="p1" [d] {\tn{Fam}(X)}="p2" [l] {Y}="p3" "p0":"p1"^-{}:"p2"^-{\tn{Fam}(g)}:@{<-}"p3"^-{\overline{f}}:@{<-}"p0"^-{r}:@{}"p2"|-{\tn{pb}}}}}} \]
in $\Cat$ in which $f$ is a discrete opfibration, $r$ and $Q$ can be described explicitly by the pullback on the right.
\end{lem}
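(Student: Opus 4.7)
My plan is to interpret the right-hand pullback in elementary terms, construct $P$ and $p$ from it by a further ordinary pullback together with a canonical lift through $g$, and then verify the distributivity universal property, using that $\overline{f}$ is the $\tn{Fam}$-generic factorisation of $\tilde{f}$ (as recorded in the footnote).

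Unpacking the right-hand pullback, an object of $Q$ is a pair $(y,h)$ with $y \in Y$ and $h : f^{-1}\{y\} \to W$ satisfying $g \circ h = i_y$, while a morphism $(y_1,h_1) \to (y_2,h_2)$ consists of $\alpha : y_1 \to y_2$ in $Y$ together with a natural transformation $\psi : h_1 \Rightarrow h_2 \tilde{f}(\alpha)$ such that $g\psi = i_\alpha$; the functor $r$ is projection to $Y$. I would then set $P := X \times_Y Q$ (taking the literal pullback of $f$ along $r$), so the square $(gp,f,r,q)$ is a pullback by construction, and define $p : P \to W$ on objects by the canonical lift $(x,(y,h)) \mapsto h(x)$, which lies over $x$ because $gh = i_y$. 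On a morphism $(\eta,(\alpha,\psi))$ of $P$ lying over $\alpha$, uniqueness of opcartesian lifts in the discrete opfibration $f$ forces $\eta = (i_\alpha)_{x_1}$, and the component $\psi_{x_1}$ lifts $\eta$ through $g$, so one sets $p(\eta,(\alpha,\psi)) := \psi_{x_1}$.

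For the universal property, suppose $(p',q',r')$ satisfies $(gp',f,r',q')$ a pullback. Then $q' : A \to B$ is a discrete opfibration (pullback of $f$) with classifying functor $\tilde{q'} = \tilde{f} \circ r'$, and the induced map $A \to X$ restricts fibrewise to the canonical bijection $q'^{-1}\{b\} \cong f^{-1}\{r'(b)\}$. Under the $\tn{Fam}$-generic correspondence, $p' : A \to W$ corresponds bijectively to a functor $B \to \tn{Fam}(W)$ lying over $\tilde{q'}$; the constraint $gp' = $ (projection to $X$) translates into that functor lying over $\overline{f} \circ r'$ through $\tn{Fam}(g)$. By the universal property of the pullback defining $Q$, this is exactly a functor $\beta : B \to Q$ with $r\beta = r'$, and $\alpha : A \to P$ is then induced into the pullback by $(gp',\beta q')$. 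Uniqueness is forced at each stage: $\beta(b)$ must equal $(r'(b),h_b)$ with $h_b$ determined by restriction of $p'$ to the fibre, and $\alpha$ is forced by the pullback property of $P$.

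I expect the principal obstacle to be the two-dimensional bookkeeping which makes the $\tn{Fam}$-generic correspondence transparent on morphisms: one must verify that the data of a natural transformation $\psi$ with $g\psi = i_\alpha$ in a morphism of $Q$ encodes precisely the way $p'$ transports the unique opcartesian lifts of $q'$ through $g$, and that this assignment is functorial in $B$. Once this compatibility between opcartesian lifts in $q'$ and the 2-cells $i_\alpha$ is spelled out, existence and uniqueness of $\beta$ and $\alpha$ drop out from the universal properties of $\tn{Fam}(W)$, the pullback $Q$, and the pullback $P$.
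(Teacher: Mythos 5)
Your proof is correct, but it is organised differently from the paper's. The paper takes the \emph{existence} of the distributivity pullback for granted (discrete opfibrations are exponentiable, and by the general theory of \cite{Weber-PolynomialFunctors} the distributivity pullback computes $\Pi_f(g)$), and then identifies $Q$ with the stated pullback simply by transposing functors $[0]\to Q$ and $[1]\to Q$ across the adjunction $\Delta_f\dashv\Pi_f$: the whole combinatorial content is the explicit description of $\Delta_f$ applied to an object and to an arrow of $Y$. You instead take the right-hand pullback as the \emph{definition} of $Q$ and $r$, manufacture $P=X\times_Y Q$ and $p:P\to W$ by hand, and verify terminality in the category of pullbacks around $(f,g)$ directly; the load-bearing fact is the correspondence, for a discrete opfibration $q'$, between functors $A\to W$ and functors $B\to\tn{Fam}(W)$ over $\widetilde{q'}$ (the generic-factorisation property recorded in the footnote), applied to the given cone $(p',q',r')$. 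Your route is self-contained in that it reproves existence rather than importing it, at the cost of the extra bookkeeping you flag at the end --- which does go through: every morphism of $A$ is the unique opcartesian lift of its image under $q'$, so the morphism-level data of $B\to\tn{Fam}(W)$ is exactly the effect of $p'$ on those lifts, naturality of the transformations $\psi$ is vacuous because the fibres are discrete, and functoriality in $B$ matches composition in $\tn{Fam}(W)$; uniqueness of $(\alpha,\beta)$ then follows since $gp'$ restricts to bijections on fibres. The two arguments share the same combinatorial core (your opcartesian-lift analysis is the paper's description of $\Delta_f[1]$ in disguise), so the difference is one of packaging rather than of substance, but yours is the more elementary presentation and the paper's the shorter one given its standing hypotheses.
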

\begin{proof}
Since discrete opfibrations are exponentiable functors the distributivity pullback exists, and one just needs to use the adjunction $\Delta_f \ladj \Pi_f$ to unpack the explicit description and match it up with the pullback on the right. An object $z$ of $Q$ over $y \in Y$ as on the left
\[ \xygraph{{\xybox{\xygraph{{[0]}="p0" [r(2)] {Q}="p1" [dl] {Y}="p2"
"p0":"p1"^-{z}:"p2"^-{r}:@{<-}"p0"^-{y} "p0" [d(.5)r] {\scriptsize{=}}}}}
[r(4)]
{\xybox{\xygraph{{f^{-1}\{y\}}="p0" [r(2)] {W}="p1" [dl] {X}="p2" "p0":"p1"^-{h}:"p2"^-{g}:@{<-}"p0"^-{i_y} "p0" [d(.5)r] {\scriptsize{=}}}}}} \]
amounts, by the adjunction $\Delta_f \ladj \Pi_f$, to the morphism $h$ on the right in the previous display where $i_y$ is the inclusion. Thus one can identify objects of $Q$ as pairs $(y,h)$, and then $r$ is given by $r(y,h) = y$. An arrow of $Q$ amounts to a functor $[1] \to Q$, and thus a choice of arrow $\alpha : y_1 \to y_2$ in $Y$ codified itself as a functor $\alpha : [1] \to Y$, together with $\beta : [1] \to Q$ such that $r\beta = \alpha$. Pulling back $\alpha$ along $f$ gives a category whose object set is the disjoint union $f^{-1}\{y_1\} \coprod f^{-1}\{y_2\}$, with one non-identity morphism for each element of $x \in f^{-1}\{y_1\}$. The domain of the morphism corresponding to $x$ is $x$ itself, and its codomain is $\tilde{f}(\alpha)(x)$. Thus by the adjunction $\Delta_f \ladj \Pi_f$, a morphism of $Q$ amounts to a morphism of the pullback.
\end{proof}
As explained in \cite{Weber-PolynomialFunctors} polynomials in $\Cat$ form a 2-bicategory $\Polyc{\Cat}$. Its objects are small categories, an arrow from $I$ to $J$ in $\Polyc {\Cat}$ is a \emph{polynomial} in $\Cat$ from $I$ to $J$, which by definition consists of functors as on the left
\[ \xygraph{{\xybox{\xygraph{{I}="p0" [r] {E}="p1" [r] {B}="p2" [r] {J}="p3" "p0":@{<-}"p1"^-{s}:"p2"^-{p}:"p3"^-{t}}}}
[r(5.5)d(.05)]
{\xybox{\xygraph{!{0;(1.5,0):(0,.5)::} {I}="p0" [ur] {E_1}="p1" [r] {B_1}="p2" [dr] {J}="p3" [dl] {B_2}="p4" [l] {E_2}="p5" "p0":@{<-}"p1"^-{s_1}:"p2"^-{p_1}:"p3"^-{t_1}:@{<-}"p4"^-{t_2}:@{<-}"p5"^-{p_2}:"p0"^-{s_2}
"p1":"p5"_-{f_2} "p2":"p4"^-{f_1}
"p1":@{}"p4"|-{\tn{pb}} "p0" [r(.5)] {\scriptstyle{=}} "p3" [l(.5)] {\scriptstyle{=}}}}}} \]
in which $p$ is exponentiable. A 2-cell $f:(s_1,p_1,t_1) \to (s_2,p_2,t_2)$ in $\Polyc {\ca E}$ is a diagram as on the right in the previous display, and we call the morphisms $f_1$ and $f_2$ the \emph{components} of $f$. The 2-cells of the hom $\Polyc{\Cat}(I,J)$ involve 2-cells between components make the resulting cones into $I$ and $J$, and the cylinder in the middle commutative.

In elementary terms the process of forming the horizontal composite $(s_3,p_3,t_3) = (s_2,p_2,t_2) \comp (s_1,p_1,t_1)$ of polynomials is encapsulated by the commutative diagram
\[ \xygraph{{I}="b1" [r] {E_1}="b2" [r] {B_1}="b3" [r] {J}="b4" [r] {E_2}="b5" [r] {B_2}="b6" [r] {K.}="b7" "b4" [u] {B_1 \times_J E_2}="p1" [u] {F}="dl" ([r(1.5)] {B_3}="dr", [l(1.5)] {E_3}="p2")
"b1":@{<-}"b2"_-{s_1}:"b3"_-{p_1}:"b4"_-{t_1}:@{<-}"b5"_-{s_2}:"b6"_-{p_2}:"b7"_-{t_2} "dl":"p1"_-{}(:"b3"_-{},:"b5"^-{}) "b2":@{<-}"p2"_-{}:"dl"_-{}:"dr"_-{}:"b6"^(.7){} "b1":@{<-}"p2"^-{s_3} "dr":"b7"^-{t_3} "p2":@/^{1pc}/"dr"^-{p_3}
"b3" [u(1.25)] {\scriptstyle{\tn{pb}}} "b5" [u(1.25)] {\scriptstyle{\tn{dpb}}} "b4" [u(.5)] {\scriptstyle{\tn{pb}}}} \]
and by Theorem 4.1.4 of \cite{Weber-PolynomialFunctors}, one has a homomorphism
\[ \begin{array}{lccr} {\PFun {\Cat} : \Polyc {\Cat} \longrightarrow \TwoCAT}
&&& {I \mapsto \Cat/I} \end{array} \]
of 2-bicategories with object map as indicated. The effect of $\PFun {\Cat}$ on arrows is to send the polynomial $(s,p,t)$ to the composite functor $\Sigma_t\Pi_p\Delta_s : \Cat/I \longrightarrow \Cat/J$. See \cite{Weber-PolynomialFunctors} for more details.
\begin{exam}\label{exam:polyfunctor-Cat-middle-dopfib}
As an illustration consider the case of a polynomial $(s,p,t)$ as above in $\Cat$, in which $p : E \to B$ is a discrete opfibration, and write $T : \Cat/I \to \Cat/J$ for the polynomial functor $\PFun{\Cat}(s,p,t)$. Then for $X \to I$ in $\Cat/I$, $TX$ is formed as on the left
\[ \xygraph{*=(0,2)!(0,1.75){\xybox{\xygraph{{\xybox{\xygraph{!{0;(1.5,0):(0,.6667)::} {I}="p1" [r] {E}="p2" [r] {B}="p3" [r] {J}="p4" "p1" [u] {X}="s" "p2" [u] {X \times_I E}="mdpb" [u] {T_{\bullet}X}="tldpb" [r] {TX}="trdpb"
"p1":@{<-}"p2"_-{s}:"p3"_-{p}:"p4"_-{t} "mdpb"(:"s"_-{}:"p1"_-{},:"p2"^-{},:@{<-}"tldpb"_-{}:"trdpb"^-{}(:"p3"|-{},:"p4"^-{}|-{}="codeq"))
"mdpb" ([r(.6)] {\scriptsize{\tn{dpb}}}, [l(.5)d(.5)] {\scriptsize{\tn{pb}}})}}}
[r(6)u(.4)]
{\xybox{\xygraph{!{0;(2.5,0):(0,.4)::} {TX}="p0" [r] {B}="p1" [d] {\tn{Fam}(E)}="p2" [d] {\tn{Fam}(I)}="p3" [l] {\tn{Fam}(X)}="p4" [u] {\tn{Fam}(X \times_I E)}="p5" "p0":"p1"^-{}:"p2"^-{\overline{p}}:"p3"^-{}:@{<-}"p4"^-{}:@{<-}"p5"^-{}:@{<-}"p0"^-{} "p5":"p2"^{} "p0":@{}"p2"|-{\tn{pb}}:@{}"p4"|-{\tn{pb}}}}}}}}} \]
and so in view of the fact that the $\tn{Fam}$ construction preserves pullbacks and Lemma \ref{lem:Cat-dpb-along-dopfib-fam}, one has pullbacks in $\CAT$ as on the right. Unpacking the composite pullback on the right, one finds that $TX$ has the following explicit description. An object is a pair $(b,h)$ where $b$ is an object of $B$ and $h : p^{-1}\{b\} \to X$ whose composite with $X \to I$ is $si_b$, where $i_b : p^{-1}\{b\} \to E$ is the inclusion of the fibre. A morphism $(b_1,h_1) \to (b_2,h_2)$ is a pair $(\beta,\gamma)$ where $\beta : b_1 \to b_2$ is in $B$ and $\gamma$ is a natural transformation as on the left
\[ \xygraph{{\xybox{\xygraph{{p^{-1}\{b_1\}}="p0" [r(2)] {p^{-1}\{b_2\}}="p1" [dl] {X}="p2" [d] {I}="p3" "p0":"p1"^-{\tilde{p}\beta}:"p2"^-{h_2}:@{<-}"p0"^-{h_1} "p2":"p3"^-{}
"p0" [d(.5)r(.85)] :@{=>}[r(.3)]^-{\gamma}}}}
[r(2)] {=} [r(2)]
{\xybox{\xygraph{{p^{-1}\{b_1\}}="p0" [r(2)] {p^{-1}\{b_2\}}="p1" [dl] {E}="p2" [d] {I}="p3" "p0":"p1"^-{\tilde{p}\beta}:"p2"^-{i_{b_2}}:@{<-}"p0"^-{i_{b_1}} "p2":"p3"^-{s}
"p0" [d(.5)r(.85)] :@{=>}[r(.3)]^-{i_{\beta}}}}}} \]
satisfying the equation, in which $\tilde{p}(\beta)$ and $i_{\beta}$ are as described prior to Lemma \ref{lem:Cat-dpb-along-dopfib-fam}. In the cases of interest in this article, $I$ is typically discrete, in which case this last equation is satisfied automatically.
\end{exam}
\begin{rem}\label{rem:spans}
A span in $\Cat$ as on the left
\[ \xygraph{{\xybox{\xygraph{{I}="p0" [r] {B}="p1" [r] {J}="p2" "p0":@{<-}"p1"^-{s}:"p2"^-{t}}}} [r(4)]
{\xybox{\xygraph{{I}="p0" [r] {B}="p1" [r] {B}="p2" [r] {J}="p3" "p0":@{<-}"p1"^-{s}:"p2"^-{1_B}:"p3"^-{t}}}}} \]
is identified with a polynomial as on the right, and the composition of polynomials generalises the usual pullback composition of spans. In particular a functor $f: I \to J$ determines the spans $f^{\bullet}$ and $f_{\bullet}$
\[ \xygraph{{\xybox{\xygraph{!{0;(.8,0):(0,1)::} {I}="p0" [r] {I}="p1" [r] {J}="p2" "p0":@{<-}"p1"^-{1_I}:"p2"^-{f}}}}
[r(2.5)]
{\xybox{\xygraph{!{0;(.8,0):(0,1)::} {J}="p0" [r] {I}="p1" [r] {I}="p2" "p0":@{<-}"p1"^-{f}:"p2"^-{1_I}}}}
[r(3.25)u(.275)]
{\xybox{\xygraph{!{0;(1,0):(0,.5)::} {J}="p0" [ur] {I}="p1" [r] {I}="p2" [dr] {J}="p3" [dl] {J}="p4" [l] {J}="p5" "p0":@{<-}"p1"^-{f}:"p2"^-{1_I}:"p3"^-{f}:@{<-}"p4"^-{1_J}:@{<-}"p5"^-{1_J}:"p0"^-{1_J}
"p1":"p5"_-{f} "p2":"p4"^-{f} "p1":@{}"p4"|-{\tn{pb}}}}}} \]
as on the left and middle respectively, and one has an adjunction $f^{\bullet} \ladj f_{\bullet}$ in $\Span {\Cat}$ and thus also in $\Polyc {\Cat}$. The counit $c_f$ of this adjunction in $\Polyc{\Cat}$ is given by the diagram on the right in the previous display.
\end{rem}
For a locally cartesian closed category $\ca E$, as described in \cite{GambinoKock-PolynomialFunctors}, the categories $\PolyEnd {\ca E}$ and $\PolyMnd {\ca E}$ of polynomial endomorphisms and polynomial monads respectively. We now adapt these definitions to the case $\ca E = \Cat$.
\begin{defn}\label{defn:PolyEnd}
An object of the category $\PolyEnd {\Cat}$ is a pair $(I,P)$ where $I$ is a small category and $P : I \to I$ is a polynomial. A morphism $(I,P) \to (J,Q)$ is a pair $(f,\phi)$ where $f : I \to J$ is a functor and $\phi : f^{\bullet} \comp P \comp f_{\bullet} \to Q$ is in $\Polyc {\Cat}(J,J)$. Given morphisms $(f,\phi) : (I,P) \to (J,Q)$ and $(g,\gamma) : (J,Q) \to (K,R)$, the underlying functor of the composite $(g,\gamma)(f,\phi)$ is $gf$, and the 2-cell datum of this composite is given by
\[ \xygraph{!{0;(1.5,0):(0,.6667)::} {g^{\bullet} \comp f^{\bullet} \comp P \comp f_{\bullet} \comp g_{\bullet}}="p0" [r(3)] {g^{\bullet} \comp Q \comp g_{\bullet}}="p1" [r(1.5)] {R.}="p2" "p0":"p1"^-{g^{\bullet} \comp \phi \comp g_{\bullet}}:"p2"^-{\gamma}} \]
\end{defn}
In more elementary terms, writing $P = (s,p,t)$ and $Q = (s',p',t')$, a morphism $(f,\phi) : (I,P) \to (J,Q)$ of $\PolyEnd{\Cat}$ amounts to a commutative diagram
\begin{equation}\label{diag:PolyEnd-morphism}
\begin{gathered}
\xygraph{!{0;(1.5,0):(0,.6667)::} {I}="p0" [r] {E}="p1" [r] {B}="p2" [r] {I}="p3" [d] {J}="p4" [l] {B'}="p5" [l] {E'}="p6" [l] {J}="p7" "p0":@{<-}"p1"^-{s}:"p2"^-{p}:"p3"^-{t}:"p4"^-{f}:@{<-}"p5"^-{t'}:@{<-}"p6"^-{p'}:"p7"^-{s'}:@{<-}"p0"^-{f} "p1":"p6"_-{f_2} "p2":"p5"^-{f_1} "p1":@{}"p5"|-{\tn{pb}}}
\end{gathered}
\end{equation}
because $f^{\bullet} \comp P \comp f_{\bullet} = (fs,p,ft)$, and the composition just described amounts to stacking such diagrams vertically. The various mates of $\phi : f^{\bullet} \comp P \comp f_{\bullet} \to Q$ with respect to $f^{\bullet} \ladj f_{\bullet}$ are denoted
\[ \begin{array}{lcccr} {\phi^c : f^{\bullet} \comp P \to Q \comp f^{\bullet}} &&
{\phi^l : P \comp f_{\bullet} \to f_{\bullet} \comp Q} &&
{\tilde{\phi} : P \to f_{\bullet} \comp Q \comp f^{\bullet}.} \end{array} \]
Note in particular that when $Q$ underlies a monad on $I$ in $\Polyc{\Cat}$, then the composite $f_{\bullet} \comp Q \comp f^{\bullet}$ underlies a monad on $I$.
\begin{defn}\label{defn:PolyMnd}
An object of the category $\PolyMnd{\Cat}$ is a again a pair $(I,P)$ with $P$ this time a monad on $I$ in $\Polyc{\Cat}$, and we shall often adopt the abuse of referring to both the endomorphism and the monad as $P$. A morphism $(I,P) \to (J,Q)$ in $\PolyMnd{\Cat}$ is a morphism $(f,\phi)$ in $\PolyEnd{\Cat}$, together with the condition that $\tilde{\phi} : P \to f_{\bullet} \comp Q \comp f^{\bullet}$ is a morphism of monads on $I$.
\end{defn}
This last condition of Definition \ref{defn:PolyMnd} admits reformulations in the language of \cite{Street-FTM}, namely that $(f^{\bullet},\phi^c)$ is a monad opfunctor, or equivalently, that $(f_{\bullet},\phi^l)$ is a monad functor.
\begin{exam}\label{exam:SMCMnd}
The basic example of a 2-monad on $\Cat$ arising from a polynomial in $\Cat$ is the 2-monad $\SMCMnd$ for symmetric monoidal categories, and was described in detail in Section 5 of \cite{Weber-PolynomialFunctors}. Its underlying endomorphism in $\Polyc{\Cat}$ is
\[ \xygraph{{1}="p0" [r] {\mathbb{P}_*}="p1" [r] {\mathbb{P}}="p2" [r] {1}="p3" "p0":@{<-}"p1"^-{}:"p2"^-{U^{\mathbb{\P}}}:"p3"^-{}} \]
where $\mathbb{P}$ is the category of natural numbers and permutations (that is, a skeleton of the category of finite sets and bijections), $\mathbb{P}_*$ is the corresponding skeleton of finite pointed sets and base point preserving bijections, and $U^{\mathbb{P}}$ is the forgetful functor. We also denote this polynomial by $\SMCMnd$. As explained in \cite{Weber-PolynomialFunctors}, the properties on $U^{\mathbb{P}}$ ensure that the 2-monad $\SMCMnd$ has good formal properties -- it is familial, opfamilial and sifted colimit preserving.
\end{exam}

\section{Operads as polynomial monads}
\label{sec:SMultiCats-Poly}
In this section we describe collections and operads in terms of polynomials, culminating in the characterisation in Theorem \ref{thm:SMultiCatAsPolyMndCat} of operads as polynomial monads over $\SMCMnd$. In the Remarks \ref{rem:clubs} and \ref{rem:Cat-in-Opd} which follow, we exhibit Kelly's clubs and $\Cat$-operads in similar terms. The 2-monad associated to an operad is then unpacked in elementary terms in Lemmas  \ref{lem:endofunctor-from-collection}-\ref{lem:mult-of-operad}.

We begin by recalling some basic definitions and fixing our notation and terminology. It will often be convenient to denote a typical element $(x_1,...,x_n)$ of a cartesian product $\prod_{i=1}^n X_i$ of sets as $(x_i)_{1{\leq}i{\leq}n}$, or as $(x_i)_i$ when $n$ is understood or when we wish it to be implicit. Moreover we denote by $\Sigma_n$ the group of permutations of $\{1,...,n\}$.
\begin{defn}\label{defn:collections}
\begin{enumerate}
\item A \emph{collection} $T$ consists of a set $I$ whose elements are called the \emph{objects} or \emph{colours} of $X$, and for each pair $((i_j)_{1{\leq}j{\leq}n},i)$ consisting of a sequence $(i_j)_j$ of elements of $I$ and a single element $i \in I$, one has a set $T((i_j)_j;i)$ whose elements are called \emph{arrows} of $T$ with \emph{source} $(i_j)_j$ and \emph{target} $i$, and a typical element may be denoted as $\alpha : (i_j)_j \to i$. Furthermore given an arrow $\alpha : (i_j)_{1{\leq}j{\leq}n} \to i$ and a permutation $\rho \in \Sigma_n$, one has an arrow $\alpha\rho : (i_{\rho j})_j \to i$, this assignation being functorial in the sense that $\alpha 1_n = \alpha$ and $(\alpha\rho_1)\rho_2 = \alpha(\rho_1\rho_2)$ for all $\rho_1,\rho_2 \in \Sigma_n$.
\item Let $S$ and $T$ be collections with object sets $I$ and $J$ respectively. Then a morphism $F : S \to T$ consists of a function $f : I \to J$ between object sets and for each $((i_j)_{1{\leq}j{\leq}n},i)$, a function $F_{((i_j)_j,i)}:S((i_j)_j;i) \to T((fi_j)_j;fi)$. These arrow mappings must be \emph{equivariant} in the sense that given $\alpha : (i_j)_{1{\leq}j{\leq}n} \to i$ in $S$ and a permutation $\rho \in \Sigma_n$, $(f\alpha)\rho = f(\alpha\rho)$.
\end{enumerate}
The category of collections and their morphisms is denoted $\Coll$.
\end{defn}
\begin{defn}\label{defn:operads}
\begin{enumerate}
\item An \emph{operad} is a collection $T$, with object set denoted $I$, together with
\begin{itemize}
\item (units): for $i \in I$, an arrow $1_i:(i) \to i$.
\item (compositions): given an arrow $\alpha : (i_j)_{1{\leq}j{\leq}n} \to i$ of $T$, and a sequence $(\beta_j:(i_{jk})_{1{\leq}k{\leq}m_j} \to i_j)_j$ of arrows of $T$, their \emph{composite} is an arrow $\alpha \comp (\beta_j)_j : (i_{jk})_{jk} \to i$, where
\[ (i_{jk})_{jk} = (i_{11},...,i_{1m_1},...,i_{n1},...,i_{nm_n})  \]
is the sequence of length $(m_1+...+m_n)$ obtained by concatenating the domains of the $y_j$.
\end{itemize}
This data must satisfy the following axioms. The \emph{unitality} and \emph{associativity} of composition say that given
\[ \begin{array}{lccccr} {\alpha : (i_j)_{1{\leq}j{\leq}n} \to i} && {\beta_j : (i_{jk})_{1{\leq}k{\leq}m_j} \to i_j} && {\gamma_{jk}:(i_{jkl})_{1{\leq}l{\leq}p_{jk}} \to i_{jk}} \end{array} \]
one has
\[ \begin{array}{lccr} {1_i \comp (\alpha) = \alpha = \alpha \comp (1_{i_j})_j} &&& {\alpha \comp (\beta_j \comp (\gamma_{jk})_k)_j = (\alpha \comp (\beta_j)_j) \comp (\gamma_{jk})_{jk}.} \end{array} \]
\emph{Equivariance} of composition says that given $\rho \in \Sigma_n$ and $\rho_j \in \Sigma_{m_j}$ for $1 \leq j \leq n$, one has
$(\alpha \comp (\beta_j)_j)(\rho(\rho_j)_j) = (\alpha\rho) \comp (\beta_j\rho_j)_j$
where $(\rho(\rho_j)_j)$ is the permutation of $\Sigma_{i=1}^n m_i$ symbols given by permuting the $n$-blocks $(m_1,...,m_n)$ using $\rho$, and permuting the elements within the $j$-th block using $\rho_j$.
\item A morphism $S \to T$ of operads is a morphism $F : S \to T$ of the underlying collections, with underlying object map denoted as $f : I \to J$, such that
\[ \begin{array}{lccr} {F1_i = 1_{fi}} &&& {F(\alpha \comp (\beta_j)_j) = (F\alpha) \comp (F\beta_j)_j} \end{array} \]
for all objects $i$ of $S$, and arrows $\alpha$ and $(\beta_j)_j$ of $S$ as above.
\end{enumerate}
The category of operads and their morphisms is denoted $\Opd$.
\end{defn}
At the end of this section we will have established, for an operad $T$ with set of colours $I$, the corresponding 2-monad $T$ on $\Cat/I$. The explicit description of the 2-monad $T$ is in terms of labelled operations in the sense of
\begin{defn}\label{defn:labelled-operations}
An \emph{operation of $T$ labelled in $X$} is a pair $(\alpha,(x_j)_j)$, where $\alpha : (i_j)_j \to i$ is an arrow of $T$, and $x_j \in X_{i_j}$. 
A \emph{morphism} $(\alpha,(x_j)_j) \to (\beta,(y_j)_j)$ is a pair $(\rho,(\gamma_j)_j)$ where $\rho$ is a permutation such that $\alpha = \beta\rho$, and $\gamma_j : x_j \to y_{\rho j}$ is a morphism of $X_{\rho j}$ for each $j$.
\end{defn}
It is also useful to depict a labelled operation $(\alpha,(x_j)_j)$ of Definition \ref{defn:labelled-operations} as
\[ \xygraph{!{0;(.8,0):(0,1)::} 
{\scriptstyle{\alpha}} *\xycircle<6pt,6pt>{-}="p0" [ul]
{\scriptstyle{x_1}} *\xycircle<6pt,6pt>{-}="p1" [r(2)]
{\scriptstyle{x_n}} *\xycircle<6pt,6pt>{-}="p2"
"p0" (-"p1",-"p2",-[d],[u(.75)] {...},[u(.5)l(.7)] {\scriptstyle{i_1}},[u(.5)r(.75)] {\scriptstyle{i_n}},[d(.7)r(.15)] {\scriptstyle{i}})} \]
and in such diagramatic terms, a morphism amounts to a shuffling of the inputs of the operations, together with a levelwise family of morphisms of $X$. Operations of $T$ labelled by $X$ form the category $TX$, which lives over $I$ via the assignation of codomains of the labelled operations. The full details will be established in Lemmas \ref{lem:endofunctor-from-collection}-\ref{lem:mult-of-operad} below.
\begin{constn}\label{const:collection-PolyEndo}
Denoting by $\PolyEnd {\Cat}/\SMCMnd$ the slice category of $\PolyEnd{\Cat}$ over the polynomial endofunctor $\SMCMnd$ of Example \ref{exam:SMCMnd}, we now construct a functor
\[ \ca N : \Coll \longrightarrow \PolyEnd {\Cat}/\SMCMnd. \]
To any collection $T$ whose set of colours is $I$ we associate a morphism
\begin{equation}\label{eq:SMCat|->PolyEndIntoSym}
\begin{gathered}
\xygraph{!{0;(1.5,0):(0,.6667)::} {I}="p0" [r] {E_T}="p1" [r] {B_T}="p2" [r] {I}="p3" [d] {1}="p4" [l] {\mathbb{P}}="p5" [l] {\mathbb{P}_*}="p6" [l] {1}="p7" "p0":@{<-}"p1"^-{s_T}:"p2"^-{p_T}:"p3"^-{t_T}:"p4"^-{}:@{<-}"p5"^-{}:@{<-}"p6"^-{U^{\P}}:"p7"^-{}:@{<-}"p0"^-{} "p1":"p6"_-{e_X} "p2":"p5"^-{b_X} "p0":@{}"p6"|-{=} "p1":@{}"p5"|-{\tn{pb}} "p2":@{}"p4"|-{=}}
\end{gathered}
\end{equation}
of $\PolyEnd {\Cat}$ as follows. Denoting by $T_n$ the set of arrows of $T$ whose source is a sequence of length $n$, $n \mapsto T_n$ is the effect on objects of a functor $\mathbb{P}^{\op} \to \Set$, and the corresponding discrete fibration is $b_T : B_T \to \mathbb{P}$. In explicit terms an object of $B_T$ is an arrow $\alpha : (i_j)_j \to i$ of $T$, and an arrow $\alpha \to \beta$ of $B_T$ is a permutation $\rho$ such that $\alpha = \beta\rho$. An object of $E_T$ is a pair $(\alpha,j)$ where $\alpha : (i_j)_{1{\leq}j{\leq}n} \to i$ is an arrow of $T$ and $1 \leq j \leq n$, and an arrow $(\alpha,j) \to (\beta,k)$ of $E_T$ is a permutation $\rho$ such that $\alpha = \beta\rho$ and $\rho j = k$. Thus a typical arrow of $E_T$ can be written as $\rho : (\alpha\rho,j) \to (\alpha,\rho j)$. The object maps of $s_T$, $p_T$, $t_T$, $b_T$ and $e_T$ are
\[ \xygraph{{i_j}="p0" [r] {(\alpha,j)}="p1" [r] {\alpha}="p2" [r] {i}="p3" [dl] {n}="p4" [l] {(n,j)}="p5"  "p0":@{<-|}"p1"^-{s_T}:@{|->}"p2"^-{p_T}:@{|->}"p3"^-{t_T} "p1":@{|->}"p5"_-{e_T} "p2":@{|->}"p4"^-{b_T}} \]
in which $n$ is the length of the domain sequence of $\alpha$, the arrow maps are defined analogously, and the pullback square is easily verified. Since $U^{\P}$ is a discrete fibration with finite fibres, so is $p_T$ since such properties on a functor are pullback stable. Thus $p_T$ is an exponentiable functor. We denote by $P_T$ the polynomial $(s_T,p_T,t_T)$, and by $\ca NT : P_T \to \sm$ the morphism $(b_T,e_T)$ of $\PolyMnd {\Cat}$.

Given a morphism $F : S \to T$ of collections the functor $F_1 : B_S \to B_T$ on objects acts as the arrow map of $F$, and sends $\rho : \alpha\rho \to \alpha$ to $\rho : (F\alpha)\rho \to F\alpha$. Clearly one has $F_1b_T = b_S$. The functor $F_2 : E_S \to E_T$ sends $(\alpha,j)$ and $\rho : (\alpha\rho,j) \to (\alpha,\rho j)$ to $(F\alpha,j)$ and $\rho : ((F\alpha)\rho,j) \to (F\alpha,\rho j)$ respectively. Clearly one has $F_2e_T = e_X$ and that $(f,F_1,F_2)$ are the components of a morphism $(I,(s_S,p_S,t_S)) \to (J,(s_T,p_T,t_T))$ of $\PolyEnd {\Cat}$.
\end{constn}
\begin{prop}\label{prop:SMultiGph->PolyEndCat}
The functor $\ca N$ restricts to an equivalence between $\Coll$ and the full subcategory of $\PolyEnd {\Cat}/\SMCMnd$ consisting of those morphisms
\[ \xygraph{!{0;(1.5,0):(0,.6667)::} {I}="p0" [r] {E}="p1" [r] {B}="p2" [r] {I}="p3" [d] {1}="p4" [l] {\mathbb{P}}="p5" [l] {\mathbb{P}_*}="p6" [l] {1}="p7" "p0":@{<-}"p1"^-{s}:"p2"^-{p}:"p3"^-{t}:"p4"^-{}:@{<-}"p5"^-{}:@{<-}"p6"^-{U^{\P}}:"p7"^-{}:@{<-}"p0"^-{} "p1":"p6"_-{e} "p2":"p5"^-{b} "p0":@{}"p6"|-{=} "p1":@{}"p5"|-{\tn{pb}} "p2":@{}"p4"|-{=}} \]
such that $I$ is discrete and the functor $b$ is a discrete fibration.
\end{prop}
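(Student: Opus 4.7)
The plan is to prove essential surjectivity and fully faithfulness of $\ca N$ onto the indicated subcategory, in each case by unpacking the definitions and exploiting two key facts: that a discrete fibration over $\mathbb{P}$ is precisely a $\mathbb{P}^{\op}$-set (i.e.\ a sequence of $\Sigma_n$-sets), and that the pullback square with $U^{\P}$ forces $p$ to be entirely determined by $b$.

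For essential surjectivity, I start with an object of $\PolyEnd{\Cat}/\SMCMnd$ as displayed in the statement, with $I$ discrete and $b$ a discrete fibration. I build a collection $T$ as follows: the colour set is $I$; the set $T_n$ of arrows of arity $n$ is the fibre $b^{-1}\{n\}$, with the $\Sigma_n$-action given by the unique cartesian lifts provided by the discrete fibration $b$. Because the left-hand square is a pullback and $U^{\P}$ is a discrete fibration whose fibre over $n$ has exactly $n$ elements (labelled $1,\ldots,n$), the fibre of $p$ over an object $\alpha \in b^{-1}\{n\}$ consists of exactly $n$ objects, canonically labelled by $\{1,\ldots,n\}$ via $e$. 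The source sequence of $\alpha$ is then read off by applying $s$ to these labelled objects (well-defined because $I$ is discrete), and the target is $t(\alpha)$. Equivariance of this assignment under $\Sigma_n$ is automatic: a morphism $\rho : \alpha\rho \to \alpha$ in $B$ induces, via the pullback and the structure of $\mathbb{P}_*$, morphisms $(\alpha\rho,j) \to (\alpha,\rho j)$ in $E$ which must be sent by $s$ to an identity (since $I$ is discrete), forcing $s_T(\alpha\rho)_j = s_T(\alpha)_{\rho j}$. A routine check verifies that $\ca N T$ is canonically isomorphic to the starting object.

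For fully faithfulness, given collections $S, T$ with colour sets $I, J$, I unpack a morphism $\ca N S \to \ca N T$ in the slice as a triple $(f,F_1,F_2)$ fitting into diagram (\ref{diag:PolyEnd-morphism}) and compatible with the cones into $\SMCMnd$. Since $I$ and $J$ are discrete, $f$ is just a function; since $b_S$ and $b_T$ are discrete fibrations and $F_1 b_T = b_S$, the functor $F_1$ is determined by its object map (the arrow map must be the unique lift), and similarly for $F_2$. The condition $t_T F_1 = f t_S$ then pins down $f$ as the effect of the morphism on colours, and $F_1$ on objects as the effect on arrows. The commutativity $s_T F_2 = f s_S$, combined with the identification of fibres of $p_S$ and $p_T$ via $e_S$ and $e_T$, becomes exactly the compatibility of source sequences. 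Functoriality of $F_1$ on the permutation morphisms of $B_S$ becomes equivariance of the underlying collection map, as required by Definition \ref{defn:collections}(2). Conversely, every morphism of collections visibly assembles into such a triple.

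The main obstacle is purely bookkeeping: keeping straight the correspondence between arrows of $E_T$ (the pairs $(\alpha,j)$ with their permutation morphisms) and the indexed source data of the collection, so that the two equivariance conditions, collection equivariance on the one hand and functoriality/naturality on the other, are seen to be the same condition viewed from two sides of the equivalence. Once the dictionary is set up, no nontrivial computation remains.
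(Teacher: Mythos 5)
Your proposal is correct and follows essentially the same route as the paper's proof: full faithfulness by reading off the collection morphism from the object maps of the components (with the arrow maps forced by the discrete fibration condition over $\mathbb{P}$), and essential surjectivity by reconstructing the collection from the fibres of $b$ and $p$, using the pullback against $U^{\P}$ to order the source data and the unique lifts along $b$ to define the $\Sigma_n$-actions. The only cosmetic difference is that the paper exhibits the given object as literally equal to $\ca NT$ rather than canonically isomorphic to it, which makes no difference to the equivalence.
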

\begin{proof}
We first verify that $\ca N$ is fully faithful. Given collections $S$ and $T$, and a morphism
\[ \xygraph{!{0;(1.5,0):(0,.6667)::} {I}="p0" [r] {E_S}="p1" [r] {B_S}="p2" [r] {I}="p3" [d] {J}="p4" [l] {B_T}="p5" [l] {E_T}="p6" [l] {J}="p7" "p0":@{<-}"p1"^-{s_S}:"p2"^-{p_S}:"p3"^-{t_S}:"p4"^-{f_0}:@{<-}"p5"^-{t_T}:@{<-}"p6"^-{p_T}:"p7"^-{s_T}:@{<-}"p0"^-{f_0} "p1":"p6"_-{f_2} "p2":"p5"^-{f_1} "p0":@{}"p6"|-{=} "p1":@{}"p5"|-{\tn{pb}} "p2":@{}"p4"|-{=}} \]
$\ca NS \to \ca NT$, one defines $F : S \to T$ with object map $f=f_0$, and with effect on arrows given by the object map of $f_1$. Since $f_1b_T = b_S$ $F$'s arrow map is equivariant. By definition $\ca NF = (f_0,f_1,f_2)$, and this equation determines $F$ uniquely.

For a morphism into $(1,\SMCMnd)$ in $\PolyEnd {\Cat}$ as in the statement, it suffices to exhibit it has $\ca NT$ for some collection $T$. We take the set of objects of $T$ to be $I$, and the set of arrows of $T$ to be $B$. The target of $\alpha \in B$ is taken to be $t\alpha$. Since $(e,b)$ is the structure on $p$ of a $U^{\P}$-fibration, $p^{-1}\{\alpha\}$ is a finite linearly ordered set, and applying $s$ componentwise to this produces the source sequence of $\alpha$ in $T$. Denoting by $n$ the length of this sequence and regarding $\rho \in \Sigma_n$ as an arrow of $\mathbb{P}$, $\rho$ lifts to a unique morphism of $B$ with codomain $\alpha$ since $b$ is a discrete fibration, and we denote this unique morphism as $\rho : \alpha\rho \to \alpha$. Thus we we have the required symmetric group actions, and their functoriality is just that of $b$. By construction $\ca NT$ is the morphism in $\PolyEnd {\Cat}$ of the statement.
\end{proof}
\begin{prop}\label{prop:SMultiCat->PolyMndCat}
The functor $\ca N$ lifts to a functor $\overline{\ca N}$ making the square
\[ \xygraph{!{0;(3,0):(0,.3333)::} {\Opd}="p0" [r] {\PolyMnd {\Cat}/\SMCMnd}="p1" [d] {\PolyEnd {\Cat}/\SMCMnd}="p2" [l] {\Coll}="p3" "p0":"p1"^-{\overline{\ca N}}:"p2"^-{}:@{<-}"p3"^-{\ca N}:@{<-}"p0"^-{}} \]
in which the vertical functors are the forgetful functors, a pullback.
\end{prop}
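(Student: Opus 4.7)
The plan is to translate the definition of a polynomial monad structure on $P_T$ over $\SMCMnd$, via the explicit description of polynomial composition recalled in Section \ref{sec:poly}, directly into the operad axioms on $T$.

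First I would make the composite polynomial $P_T \comp P_T$ explicit. Since $p_T : E_T \to B_T$ is a discrete opfibration with finite fibres (as is $U^{\P}$), the distributivity pullback appearing in the composition is computable in the style of Lemma \ref{lem:Cat-dpb-along-dopfib-fam}, and the objects of the ``middle'' of $P_T \comp P_T$ can be identified with composable pairs $(\alpha,(\beta_j)_j)$, where $\alpha : (i_j)_{1{\leq}j{\leq}n} \to i$ is an arrow of $T$ and $\beta_j : (i_{jk})_k \to i_j$ is an arrow of $T$ for each $j$. The morphisms of this middle category are generated by an outer permutation of the $\beta_j$ (with compatible symmetry on $\alpha$) together with inner permutations on the inputs of each $\beta_j$, reflecting the ``shuffle of shuffles'' structure inherent in nested operad composition.

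Next I would show that a monad structure on $P_T$ in $\Polyc{\Cat}$ making $\ca NT$ a monad morphism is exactly an operad structure on $T$. The unit $\eta : 1_I \to P_T$ is constrained by the pullback square appearing in the definition of $\ca NT$ so that it picks out, for each $i \in I$, a unique unary arrow of $T$ with source and target $i$, which is declared to be the operad unit $1_i$. The multiplication $\mu : P_T \comp P_T \to P_T$ is likewise constrained so that the arity of its value is the concatenation of input arities — i.e.\ the underlying arity datum is forced to be the multiplication of $\SMCMnd$ — so its only free object-level datum is the assignment $(\alpha,(\beta_j)_j) \mapsto \alpha \comp (\beta_j)_j$. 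The unit and associativity laws for $(\eta,\mu)$ then translate directly into the unitality and associativity of operad composition, while the naturality of $\mu$ with respect to the outer-and-inner permutations described above encodes precisely the equivariance axiom $(\alpha \comp (\beta_j)_j)(\rho(\rho_j)_j) = (\alpha\rho) \comp (\beta_j\rho_j)_j$ of Definition \ref{defn:operads}.

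Functoriality of $\overline{\ca N}$ is then routine: any morphism $F : S \to T$ of operads preserves units and compositions, so $\ca NF$ is compatible with the monad structures just identified, and conversely any monad-morphism lift of $\ca NF$ determines such an $F$. The pullback property reduces to the observation that, for a fixed collection $T$, the set of monad lifts of $P_T$ into $\PolyMnd{\Cat}/\SMCMnd$ is in natural bijection with the set of operad structures on $T$, via the correspondence established above.

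The main obstacle is the combinatorial bookkeeping in unpacking $P_T \comp P_T$: one must verify that both its object set and its morphisms match up precisely with composable families of operations and their combined equivariance, so that the monad laws become termwise rewritings of the operad axioms. Once that identification is secured, each half of the correspondence — operad data producing a monad structure, and monad data producing an operad structure — is obtained by direct reading off, and compatibility with $\SMCMnd$ is automatic because it is built into the pullback-shape of $\ca NT$.
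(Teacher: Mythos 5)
Your proposal is correct and follows essentially the same route as the paper: unpack $P_T\comp P_T$ explicitly (objects of its middle category are the composable pairs $(\alpha,(\beta_j)_j)$, arrows are outer permutations $\rho$ together with inner permutations $\rho_j$), observe that compatibility with $\SMCMnd$ forces the arity data of unit and multiplication, so that the remaining free data is exactly the operad units and compositions, with functoriality on the permutation arrows encoding equivariance and the monad laws encoding unitality and associativity; the morphism-level correspondence then gives the pullback square. The paper likewise leaves the final unit/associativity bookkeeping to the reader, so your level of detail matches its own.
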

\begin{proof}
We shall first establish a bijection between operad structures on a collection $T$, and polynomial monad structures on $P_T$ making $\ca NT$ a morphism of polynomial monads. Second, given collections $S$ and $T$ and a morphism $F : S \to T$ of their underlying collections, we shall prove that $F$ is a morphism of operads iff $\ca NF$ is a morphism of the corresponding polynomial monads over $\SMCMnd$.

Let $T$ be a collection with object set $I$. To give units for $T$ is to give a functor $u_{T,1}:I \to B_T$ such that: (1) $t_Tu_{T,1} = 1_I$, (2) for each $i$ the fibre $p_T^{-1}\{u_{T,1}i\}$ consists of a unique element $u_{T,2}i$, and (3) $su_{T,2}i = i$. Since $b_T$ sends elements with singleton fibres to $1 \in \mathbb{P}$ and $e_T$ sends the unique elements of those fibres to $(1,1) \in \mathbb{P}_*$, $\ca NT$ commutes with these unit maps and those of $\SMCMnd$. Thus to give units for $T$ is to give $u_T:1_I \to P_T$ with respect to which $\ca NT$ is compatible.

We now characterise compositions for an operad structure on $T$ in similar terms. The polynomial $P_T \comp P_T$ is formed as
\[ \xygraph{{I}="b1" [r] {E_T}="b2" [r] {B_T}="b3" [r] {I}="b4" [r] {E_T}="b5" [r] {B_T}="b6" [r] {I.}="b7" "b4" [u] {B_T \times_I E_T}="p1" [u] {F_T}="dl" ([r(1.5)] {B_T^{(2)}}="dr", [l(1.5)] {E_T^{(2)}}="p2")
"b1":@{<-}"b2"_-{s_T}:"b3"_-{p_T}:"b4"_-{t_T}:@{<-}"b5"_-{s_T}:"b6"_-{p_T}:"b7"_-{t_T} "dl":"p1"_-{}(:"b3"_-{},:"b5"^-{}) "b2":@{<-}"p2"_-{}:"dl"_-{}:"dr"_-{}:"b6"^(.7){q} "b1":@{<-}"p2"^-{s_T^{(2)}} "dr":"b7"^-{t_T^{(2)}} "p2":@/^{1pc}/"dr"^-{p_T^{(2)}}
"b3" [u(1.25)] {\scriptstyle{\tn{pb}}} "b5" [u(1.25)] {\scriptstyle{\tn{dpb}}} "b4" [u(.5)] {\scriptstyle{\tn{pb}}}} \]
An object of $B_T^{(2)}$ can be identified with a functor $b:[0] \to B^{(2)}_T$. Writing $\alpha = qb$, an object of $B_T^{(2)}$ may be regarded as the data: (1) an arrow $\alpha : (i_j)_{1{\leq}j{\leq}n} \to i$ of $T$ viewed also as a functor $\alpha : [0] \to B_T$, and (2) a functor $b:[0] \to B_T^{(2)}$ over $B_T$. The functor $q$ is the effect of $\Pi_{p_X}$ on the functor $B_T \times_I E_T \to E_T$, and the pullback of $\alpha : [0] \to B_T$ along $p_T$ is the discrete subcategory of $E_T$ consisting of the pairs $(\alpha,j)$ for $1 \leq j \leq n$. Thus (2) amounts to giving an arrow $\beta_j$ of $T$ for each $1 \leq j \leq n$ whose target is $i_j$, and so an object of $B_T^{(2)}$ is exactly the data $(\alpha,(\beta_j)_j)$ that can be composed in the multicategory $T$.

Similarly an arrow of $B_T^{(2)}$ can be identified with a functor $[1] \to B_T$ together with a functor $[1] \to B_T^{(2)}$ over $B_T$. This first datum is just an arrow of $B_T$, and so is of the form $\rho : \alpha\rho \to \alpha$, where $\alpha : (i_j)_{1{\leq}j{\leq}n} \to i$ is an arrow of $T$, and $\rho \in \Sigma_n$. Pulling back the functor $[1] \to B_T$ so determined along $p_T$ produces a category with objects of the form $(\alpha\rho,j)$ or $(\alpha,j)$ for $1 \leq j \leq n$, and invertible arrows $(\alpha\rho,j) \to (\alpha,\rho j)$. Thus the second piece of data determining an arrow of $B_T^{(2)}$ amounts to giving morphisms $\rho_j : \beta_j\rho_j \to \beta$ in $B$, such that $t\beta_j = i_j$ for each $j$. Thus the general form of an arrow of $B_T^{(2)}$ is
\begin{equation}\label{eq:arrow-of-B2}
(\rho,(\rho_j)_{1{\leq}j{\leq}n}) : (\alpha\rho,(\beta_j\rho_j)_j) \longrightarrow (\alpha,(\beta_j)_j)
\end{equation}
where $\alpha : (i_j)_{1{\leq}j{\leq}n} \to i$ is an arrow of $T$, $\rho \in \Sigma_n$, and for each $j$, $\beta_j : (i_{jk})_{1{\leq}k{\leq}m_j} \to i_j$ is an arrow of $T$ and $\rho_j \in \Sigma_{m_j}$.

A description of $E_T^{(2)}$ is now easily obtainable, since $E_T^{(2)}$ is obtained by pulling back the functor $B_T^{(2)} \to B_T$ which we now know explicitly. So an object of $E_T^{(2)}$ consists of $(\alpha,(\beta_j)_j,j,k)$, where $\alpha : (i_j)_{1{\leq}j{\leq}n} \to i$ and $\beta_j:(i_{jk})_{1{\leq}k{\leq}m_j} \to i_j$ are arrows of $T$, $1 \leq j \leq n$ and $1 \leq k \leq m_j$. Morphisms of $E_T^{(2)}$ are of the form
\begin{equation}\label{eq:arrow-of-E2}
(\rho,(\rho_j)_{1{\leq}j{\leq}n}) : (\alpha\rho,(\beta_j\rho_j)_j,j,k) \longrightarrow (\alpha,(\beta_j)_j,\rho j,\rho_jk)
\end{equation}
and the explicit descriptions of the functors $s_T^{(2)}$, $p_T^{(2)}$ and $t_T^{(2)}$ are now self-evident.

Given these details, an object map for a functor $m_{T,1}:B^{(2)}_T \to B_T$ amounts to assignations $(\alpha,(\beta_j)_j) \mapsto \alpha \comp (\beta_j)_j$. Giving $m_{T,1}$ on arrows amounts to assigning to (\ref{eq:arrow-of-B2}), an arrow $(\alpha\rho) \comp (\beta\rho_j)_j \to \alpha \comp (\beta_j)_j$ of $B_T$, and the compatibility of $m_{T,1}$ with $\ca NT$ and the corresponding component of $\SMCMnd$'s multiplication, amounts to the underlying permutation of this arrow of $B_T$ being obtained via the substitution of permutations, which corresponds to equivariance. To say that the target of $\alpha \comp (\beta_j)_j$ is that of $\alpha$ for all $(\alpha,(\beta_j)_j)$, is to say that $m_1t_T = t^{(2)}_T$. A functor $m_{T,2}:E^{(2)}_T \to E_T$ providing the other component of $m_T :P_T \comp P_T \to P_T$ is determined by its restrictions to the fibres of $p^{(2)}_T$ which are finite discrete, and giving these amounts to specifying that for all $(\alpha,(\beta_j)_j)$, the domain of the composite $\alpha \comp (\beta_j)_j$ is the concatenation of the domains of the $\beta_j$ as for composition in an operad. In summary, to give $T$ a composition operation is to give $m_T : P_T \comp P_T \to P_T$ in $\Polyc {\Cat}$. The straightforward though tedious verification that the unit and associative laws for $(u_T,m_T)$ correspond with the unitality and associativity of composition for $T$ is left to the reader.

Let $S$ and $T$ be collections and $F : S \to T$ be a morphism of their underlying collections. To say $\ca NF$ is compatible with units amounts to the equation $F_1u_{S,1} = u_{T,1}$, the equation $F_2u_{S,2} = u_{T,2}$ being a consequence, and this in turn is equivalent to saying that $F$ sends identities in $S$ to identities in $T$. We leave to the reader the straightforward verification that $\ca NF$'s compatibility with multiplications amounts to the formulae $F(\alpha \comp (\beta_j)_j) = F\alpha \comp (F\beta_j)_j$ expressing $F$'s compatibilty with composition.
\end{proof}
By Propositions \ref{prop:SMultiGph->PolyEndCat} and \ref{prop:SMultiCat->PolyMndCat} we have
\begin{thm}\label{thm:SMultiCatAsPolyMndCat}
The functor $\overline{\ca N}$ restricts to an equivalence between \\ $\Opd$ and the full subcategory of $\PolyMnd {\Cat}/\SMCMnd$ consisting of those monad morphisms
\[ \xygraph{!{0;(1.5,0):(0,.6667)::} {I}="p0" [r] {E}="p1" [r] {B}="p2" [r] {I}="p3" [d] {1}="p4" [l] {\mathbb{P}}="p5" [l] {\mathbb{P}_*}="p6" [l] {1}="p7" "p0":@{<-}"p1"^-{s}:"p2"^-{p}:"p3"^-{t}:"p4"^-{}:@{<-}"p5"^-{}:@{<-}"p6"^-{U^{\P}}:"p7"^-{}:@{<-}"p0"^-{} "p1":"p6"_-{e} "p2":"p5"^-{b} "p0":@{}"p6"|-{=} "p1":@{}"p5"|-{\tn{pb}} "p2":@{}"p4"|-{=}} \]
such that $I$ is discrete and the functor $b$ is a discrete fibration.
\end{thm}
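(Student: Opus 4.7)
The plan is to deduce this theorem directly from the two preceding propositions by a pullback argument. Proposition \ref{prop:SMultiGph->PolyEndCat} identifies $\ca N$ as an equivalence between $\Coll$ and the full subcategory $\ca C \subseteq \PolyEnd{\Cat}/\SMCMnd$ cut out by the discreteness of $I$ and the discrete fibration property on $b$, while Proposition \ref{prop:SMultiCat->PolyMndCat} exhibits $\overline{\ca N}$ as the pullback of $\ca N$ along the forgetful functor $\PolyMnd{\Cat}/\SMCMnd \to \PolyEnd{\Cat}/\SMCMnd$. The general principle I would invoke is that equivalences onto full subcategories are stable under pullback along faithful functors, and both forgetful functors in the square are faithful since monad structure is property-like in this setting (a morphism of polynomial endofunctors preserves a chosen monad structure or it does not, uniquely).

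First I would verify that $\overline{\ca N}$ is fully faithful. Given operads $S$ and $T$, a morphism $\overline{\ca N}(S) \to \overline{\ca N}(T)$ in $\PolyMnd{\Cat}/\SMCMnd$ is by the pullback property precisely a morphism $\ca N(S) \to \ca N(T)$ in $\PolyEnd{\Cat}/\SMCMnd$ that additionally preserves the monad structures. By the fully faithful part of Proposition \ref{prop:SMultiGph->PolyEndCat} such a morphism corresponds uniquely to a morphism $F : S \to T$ of underlying collections, and by the second half of the proof of Proposition \ref{prop:SMultiCat->PolyMndCat} the extra compatibility with monad structure is equivalent to $F$ being a morphism of operads. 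Hence $\overline{\ca N}$ is fully faithful.

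Second I would establish essential surjectivity onto the described subcategory. Let $\mu : P \to \SMCMnd$ be a morphism of polynomial monads as in the statement. Forgetting monad structure, $\mu$ lies in $\ca C$, so by Proposition \ref{prop:SMultiGph->PolyEndCat} there exist a collection $T$ and an isomorphism $\ca N(T) \iso \mu$ in $\PolyEnd{\Cat}/\SMCMnd$. Transporting the monad structure on $\mu$ along this isomorphism produces a monad structure on $P_T$ making $\ca N(T)$ into a morphism of polynomial monads, which by the first half of the proof of Proposition \ref{prop:SMultiCat->PolyMndCat} arises from a unique operad structure on $T$. The isomorphism $\ca N(T) \iso \mu$ then lifts to an isomorphism $\overline{\ca N}(T) \iso \mu$ in $\PolyMnd{\Cat}/\SMCMnd$, completing the argument. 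There is no real obstacle here beyond bookkeeping; the work has all been done in Propositions \ref{prop:SMultiGph->PolyEndCat} and \ref{prop:SMultiCat->PolyMndCat}, and the only point to be careful about is that the characterising conditions ``$I$ discrete and $b$ a discrete fibration'' are conditions on the underlying polynomial endomorphism, so the subcategory on the right of the theorem is exactly the preimage of $\ca C$ under the forgetful functor.
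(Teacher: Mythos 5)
Your proof is correct and follows the same route as the paper, which states the theorem as an immediate consequence of Propositions \ref{prop:SMultiGph->PolyEndCat} and \ref{prop:SMultiCat->PolyMndCat}; you have simply made explicit the routine pullback bookkeeping that the paper leaves implicit. The only cosmetic caveat is that your opening ``general principle'' about pulling back equivalences along faithful functors is not literally true for strict pullbacks unless the equivalence is surjective on objects (or one leg is an isofibration), but this is harmless here: the proof of Proposition \ref{prop:SMultiGph->PolyEndCat} exhibits each object of the relevant subcategory as being \emph{equal}, not merely isomorphic, to some $\ca NT$, and in any case you verify fullness, faithfulness and essential surjectivity directly rather than relying on the slogan.
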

\begin{rem}\label{rem:clubs}
A \emph{club} in the sense of Max Kelly \cite{Kelly-ClubsDoctrines, Kelly-ClubsDataTypeConstructors} can be identified as a 2-monad $A$ on $\Cat$ together with a cartesian monad morphism $\phi : A \to \SMCMnd$. In general when one has a cartesian monad morphism into a polynomial monad, the domain monad is also easily exhibited as polynomial, and so clubs can be identified as those objects
\[ \xygraph{!{0;(1.5,0):(0,.6667)::} {I}="p0" [r] {E}="p1" [r] {B}="p2" [r] {I}="p3" [d] {1}="p4" [l] {\mathbb{P}}="p5" [l] {\mathbb{P}_*}="p6" [l] {1}="p7" "p0":@{<-}"p1"^-{s}:"p2"^-{p}:"p3"^-{t}:"p4"^-{}:@{<-}"p5"^-{}:@{<-}"p6"^-{U^{\P}}:"p7"^-{}:@{<-}"p0"^-{} "p1":"p6"_-{e} "p2":"p5"^-{b} "p0":@{}"p6"|-{=} "p1":@{}"p5"|-{\tn{pb}} "p2":@{}"p4"|-{=}} \]
of $\PolyMnd {\Cat}/\SMCMnd$ such that $I = 1$.
\end{rem}
\begin{rem}\label{rem:Cat-operads}
A \emph{$\Cat$-operad} is defined in the same way as an operad is, except that the homs are categories and the units and compositions define functors, this being an instance of how the notion of operad can be enriched. Equivalently denoting by $\Opd_I$ the category of operads with objects set $I$ and morphisms whose object function is $1_I$, a $\Cat$-operad with set of objects $I$ is a category internal to $\Opd_I$. As explained in \cite{Weber-PolynomialFunctors} pullbacks in $\Polyc{\Cat}(I,I)$ are formed componentwise, and its straightforward to verify that the restriction
\[ \Opd_I \longrightarrow \Polyc{\Cat}(I,I) \]
of $\overline{\ca N}$ preserves pullbacks. From this it is straightforward to see that one can identify $\Cat$-operads with objects
\[ \xygraph{!{0;(1.5,0):(0,.6667)::} {I}="p0" [r] {E}="p1" [r] {B}="p2" [r] {I}="p3" [d] {1}="p4" [l] {\mathbb{P}}="p5" [l] {\mathbb{P}_*}="p6" [l] {1}="p7" "p0":@{<-}"p1"^-{s}:"p2"^-{p}:"p3"^-{t}:"p4"^-{}:@{<-}"p5"^-{}:@{<-}"p6"^-{U^{\P}}:"p7"^-{}:@{<-}"p0"^-{} "p1":"p6"_-{e} "p2":"p5"^-{b} "p0":@{}"p6"|-{=} "p1":@{}"p5"|-{\tn{pb}} "p2":@{}"p4"|-{=}} \]
of $\PolyMnd {\Cat}/\SMCMnd$, together with the structure of a split fibration on $b$.
\end{rem}
\begin{rem}\label{rem:Cat-in-Opd}
A category can be regarded as an operad $T$ in which the source of every arrow is a sequence of length $1$, which is so iff in its underlying polynomial depicted on the left
\[ \xygraph{{\xybox{\xygraph{{I}="p0" [r] {E_T}="p1" [r] {B_T}="p2" [r] {I}="p3" "p0":@{<-}"p1"^-{s_T}:"p2"^-{p_T}:"p3"^-{t_T}}}}
[r(4)]
{\xybox{\xygraph{{I}="p0" [r] {E}="p1" [r] {B}="p2" [r] {I}="p3" "p0":@{<-}"p1"^-{s}:"p2"^-{p}:"p3"^-{t}}}}} \]
$p_T$ is an isomorphism. In this case $E_T$ and $B_T$ are discrete, and $b_T : B_T \to \mathbb{P}$ and $e_T : E_T \to \mathbb{P}_*$ are determined uniquely by the polynomial $(s_T,p_T,t_T)$. For any polynomial as on the right in the previous display in which $p$ is an isomorphism and $E$ and $B$ are discrete, one as a unique isomorphism $(s,p,t) \iso (sp^{-1},1_B,t)$ with a span of sets. Thus the equivalence of Theorem \ref{thm:SMultiCatAsPolyMndCat} essentially restricts to an equivalence of categories which on objects identifies a category with its corresponding monad in $\Span{\Set}$.
\end{rem}
An operad $T$ with object set $I$ determines a 2-monad $(I,P_T)$ in the 2-bicategory $\Polyc {\Cat}$, and so by means of $\PFun {\Cat}$, a 2-monad on $\Cat/I$.
\begin{notn}\label{notn:associated-2-monad}
Given an operad $T$ with object set $I$, we also denote the associated 2-monad on $\Cat/I$ as $T$.
\end{notn}
\begin{exam}\label{exam:Com}
The terminal operad which has one object and a unique arrow of with source of length $n$, is usually denoted as $\Com$. Its corresponding polynomial is $\SMCMnd$. Following notation \ref{notn:associated-2-monad} one thus has $\Com = \SMCMnd$.
\end{exam}
We now turn to the task of giving an explicit description of this 2-monad $T$ on $\Cat/I$. Let $X \in \Cat/I$. We regard $X$ both as $X \to I$ a category equipped with a functor into $I$, and as $(X_i)_{i \in I}$ an $I$-indexed family of categories. Applying the general calculation of Example \ref{exam:polyfunctor-Cat-middle-dopfib} to the polynomial $(s_T,p_T,t_T)$ one obtains
\begin{lem}\label{lem:endofunctor-from-collection}
Let $T$ be a collection with object set $I$ and $X \in \Cat/I$. Then $TX$ may be identified with the category of operations of $T$ labelled in $X$, in the sense of Definition \ref{defn:labelled-operations}.
\end{lem}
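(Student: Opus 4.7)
The plan is to apply Example \ref{exam:polyfunctor-Cat-middle-dopfib} directly to the polynomial $(s_T,p_T,t_T)$ associated to $T$ by Construction \ref{const:collection-PolyEndo}, and then unpack the resulting description using the explicit formulas for $B_T$, $E_T$, $s_T$ and $p_T$ recorded there. The first thing to check is that the hypothesis of the example applies, namely that $p_T$ is a discrete opfibration. This is immediate from the pullback square appearing in Construction \ref{const:collection-PolyEndo}: $U^{\mathbb{P}} : \mathbb{P}_* \to \mathbb{P}$ is a discrete fibration with finite fibres between groupoids and hence also a discrete opfibration, and these properties are pullback stable.

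Given this, Example \ref{exam:polyfunctor-Cat-middle-dopfib} describes $TX$ explicitly. An object of $TX$ is a pair $(b,h)$ with $b$ an object of $B_T$ and $h : p_T^{-1}\{b\} \to X$ over $I$. By the description of $B_T$, the datum $b$ is exactly an arrow $\alpha : (i_j)_j \to i$ of $T$, and the fibre $p_T^{-1}\{\alpha\}$ is the discrete category whose objects are the pairs $(\alpha,j)$ for $1 \leq j \leq n$, with $s_T(\alpha,j) = i_j$. Thus $h$ amounts to a choice of $x_j \in X_{i_j}$ for each $j$, and pairs $(\alpha,(x_j)_j)$ are precisely the objects of the category of labelled operations in Definition \ref{defn:labelled-operations}.

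For morphisms, Example \ref{exam:polyfunctor-Cat-middle-dopfib} gives a pair $(\beta,\gamma)$ with $\beta : \alpha_1 \to \alpha_2$ in $B_T$ and $\gamma$ a natural transformation on the fibre. By Construction \ref{const:collection-PolyEndo}, a morphism $\beta$ of $B_T$ with target $\alpha_2$ is a permutation $\rho$ exhibiting the source as $\alpha_2 \rho$, and the induced action $\tilde{p_T}(\rho)$ on fibres sends $(\alpha_2\rho,j)$ to $(\alpha_2, \rho j)$. Since $p_T^{-1}\{\alpha_2\rho\}$ is discrete, $\gamma$ degenerates to a family of arrows $\gamma_j : x_j \to y_{\rho j}$ in $X_{\rho j}$. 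Moreover, as $I$ is discrete the coherence equation of the example is automatic, so these data match Definition \ref{defn:labelled-operations} exactly. Identity and composition evidently correspond on both sides, yielding the claimed isomorphism of categories over $I$.

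The only real bookkeeping hurdle is aligning the direction and indexing of the permutation $\rho$ on the two sides — specifically, checking that the equation $\alpha_1 = \alpha_2\rho$ arising from the arrow description of $B_T$ corresponds to the convention in Definition \ref{defn:labelled-operations} — but this is forced by the formula $\tilde{p_T}(\rho)(\alpha_2\rho,j) = (\alpha_2,\rho j)$ and causes no difficulty.
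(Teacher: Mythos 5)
Your proposal is correct and follows exactly the route the paper intends: the paper offers no separate proof of this lemma beyond the remark that it follows by applying Example \ref{exam:polyfunctor-Cat-middle-dopfib} to the polynomial $(s_T,p_T,t_T)$, which is precisely what you do, including the observation that $p_T$ is a discrete opfibration by pullback stability and the careful matching of the arrow convention $\alpha = \beta\rho$ with the fibre action $(\alpha\rho,j)\mapsto(\alpha,\rho j)$.
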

Similarly one can unpack the explicit description of $Tf : TX \to TY$ given $f : X \to Y$ in $\Cat/I$. By definition $Tf$ is induced from the functoriality of pullbacks and distributivity pullbacks in
\begin{equation}\label{eq:diag-for-2-functoriality-collection}
\begin{gathered}
\xygraph{!{0;(1.5,0):(0,.55)::}
{I}="p0" [r] {E_T}="p1" [r] {B_T}="p2" [r] {I}="p3" "p0":@{<-}"p1"^-{}:"p2"^-{}:"p3"^-{}
"p0" ([lu] {X}="q0",[ld] {Y}="r0")
"q0" [r] {X \times_I E_T}="q1" [ur] {T_{\bullet}X}="q2" [r] {TX}="q3"
"q2" (:"q1"(:"q0":"p0",:"p1"),:"q3"(:"p2",:"p3"))
"r0" [r] {Y \times_I E_T}="r1" [dr] {T_{\bullet}Y}="r2" [r] {TY}="r3"
"r2" (:"r1"(:"r0":"p0",:"p1"),:"r3"(:"p2",:"p3"))
"q0":"r0"_-{f} "q1":@/_{1pc}/@{.>}"r1" "q2":@/_{1pc}/@{.>}"r2" "q3":@/_{1pc}/@{.>}"r3"_(.3){Tf}
"p1" ([u(1)r(.3)] {\scriptsize{\tn{dpb}}}, [d(1)r(.3)] {\scriptsize{\tn{dpb}}})}
\end{gathered}
\end{equation}
and then by tracing through the explicit descriptions as in Example \ref{exam:polyfunctor-Cat-middle-dopfib} one can verify
\begin{lem}\label{lem:endofunctor-from-collection-arrow-map}
Let $T$ be a collection with object set $I$. Given a morphism $f : X \to Y$ in $\Cat/I$, one has
\[ \begin{array}{lccr} {Tf(\alpha,(x_j)_j) = (\alpha,(fx_j)_j)} &&&
{Tf(\rho,(\beta_j)_j) = (\rho,(f\beta_j)_j).} \end{array} \]
\end{lem}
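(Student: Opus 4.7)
The plan is to unpack the 2-functoriality of the distributivity pullback in (\ref{eq:diag-for-2-functoriality-collection}) using the explicit pullback description of $TX$ and $TY$ from Example \ref{exam:polyfunctor-Cat-middle-dopfib}. Since $p_T$ is a discrete opfibration and $\tn{Fam}$ preserves pullbacks, that example identifies $TX$ with a pullback in $\CAT$ whose components carry $\tn{Fam}(X)$, and exhibits $Tf$ as the morphism of pullbacks induced by $\tn{Fam}(f) : \tn{Fam}(X) \to \tn{Fam}(Y)$ whilst leaving the $B_T$-component fixed. It thus remains to chase the explicit descriptions on objects and arrows.

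For the object formula, Lemma \ref{lem:endofunctor-from-collection} identifies a labelled operation $(\alpha,(x_j)_j)$ with the pair consisting of $\alpha \in B_T$ and the function $h : p_T^{-1}\{\alpha\} \to X$ given by $(\alpha,j) \mapsto x_j$, which respects the map to $I$ since $x_j \in X_{i_j}$. Since $\tn{Fam}(f)$ acts on objects by post-composition with $f$, it sends this $h$ to the function $(\alpha,j) \mapsto fx_j$, while $\alpha$ is unchanged. This gives the desired formula $Tf(\alpha,(x_j)_j) = (\alpha,(fx_j)_j)$.

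For the arrow formula, an arrow $(\rho,(\beta_j)_j) : (\alpha\rho,(x_j)_j) \to (\alpha,(y_j)_j)$ of $TX$ corresponds, under the pullback description, to $\rho$ viewed as a morphism $\alpha\rho \to \alpha$ in $B_T$ together with a morphism in $\tn{Fam}(X)$ whose underlying natural transformation has components $\beta_j : x_j \to y_{\rho j}$. The arrow map of $\tn{Fam}(f)$ post-composes such natural transformations with $f$, producing components $f\beta_j$, while $\rho$ is untouched, yielding $Tf(\rho,(\beta_j)_j) = (\rho,(f\beta_j)_j)$. The verification is essentially routine bookkeeping once Example \ref{exam:polyfunctor-Cat-middle-dopfib} is in hand; the only point requiring care is keeping straight that only the labelling data, and neither the permutation nor the underlying element of $B_T$, is transported by $Tf$.
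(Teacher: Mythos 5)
Your proposal is correct and follows essentially the same route as the paper, which likewise obtains $Tf$ from the functoriality of the (distributivity) pullbacks in (\ref{eq:diag-for-2-functoriality-collection}) and then traces through the explicit $\tn{Fam}$-pullback description of Example \ref{exam:polyfunctor-Cat-middle-dopfib}. Your observation that only the labelling data is transported while the $B_T$-component (the operation and the permutation) is fixed is exactly the content of the verification.
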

Remembering that the pullbacks and distributivity pullbacks that appear in (\ref{eq:diag-for-2-functoriality-collection}) enjoy a 2-dimensional universal property, one can in much the same way verify
\begin{lem}\label{lem:endofunctor-from-collection-2cell-map}
Let $T$ be a collection with object set $I$. Given morphisms $f$ and $g : X \to Y$ in $\Cat/I$ and a 2-cell $\phi : f \to g$, one has
\[ \begin{array}{lcr} {(T\phi)_{(\alpha,(x_j)_j)}} & {=} &
{(\alpha,(fx_j)_j) \xrightarrow{(1,(\phi_{x_j})_j)} (\alpha,(gx_j)_j).} \end{array} \]
\end{lem}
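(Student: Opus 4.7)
The plan is to mirror the arrow-map computation in Lemma \ref{lem:endofunctor-from-collection-arrow-map}, working now at the 2-cell level. Since $T = \Sigma_{t_T}\Pi_{p_T}\Delta_{s_T}$ and all three of $\Sigma$, $\Pi$, $\Delta$ are 2-functorial (the latter two by the 2-dimensional universal properties of pullback and of distributivity pullback recalled in Section \ref{sec:poly}), $T\phi$ is obtained by successive 2-functorial application. Only the component of the resulting natural transformation at $(\alpha,(x_j)_j)$ needs to be unpacked.

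First, by the 2-dimensional universal property of pullback, $\Delta_{s_T}\phi$ has component at $(x,(\alpha,j)) \in X \times_I E_T$ given by $(\phi_x,1_{(\alpha,j)})$ in $Y \times_I E_T$. Next, I exploit the pullback description of $TX$ from Example \ref{exam:polyfunctor-Cat-middle-dopfib}, which uses Lemma \ref{lem:Cat-dpb-along-dopfib-fam} to replace the distributivity pullback by an ordinary pullback in $\CAT$ involving $\tn{Fam}$. Since $\tn{Fam}$ is 2-functorial and pullbacks enjoy a 2-dimensional universal property, the component of $T\phi$ at $(\alpha,(x_j)_j)$ is the unique morphism of $TY$ whose projection to $B_T$ is $1_\alpha$ (as $\phi$ is sent to the identity on the $B_T$-side) and whose projection to $\tn{Fam}(Y \times_I E_T)$ is the morphism in $\tn{Fam}$ whose underlying map of indexing sets is the identity on $p_T^{-1}\{\alpha\}$ and whose levelwise natural transformation has components $\phi_{x_j}$.

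Finally, translating back through the explicit description of morphisms of $TY$ as pairs $(\rho,(\gamma_j)_j)$, the identity projection to $B_T$ forces $\rho = 1$, and the levelwise data yields $\gamma_j = \phi_{x_j}$. Hence $(T\phi)_{(\alpha,(x_j)_j)} = (1,(\phi_{x_j})_j)$ as asserted. The only subtle point is invoking the 2-dimensional universal property of the distributivity pullback, but since $p_T$ is a discrete opfibration this is reduced to the 2-dimensional universal property of ordinary pullback in $\CAT$ via Lemma \ref{lem:Cat-dpb-along-dopfib-fam}; the remaining verifications are routine analogues of those required for Lemma \ref{lem:endofunctor-from-collection-arrow-map}.
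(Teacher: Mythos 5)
Your proposal is correct and follows essentially the same route the paper indicates: the paper's own (sketched) argument for this lemma is precisely to invoke the 2-dimensional universal properties of the pullbacks and distributivity pullbacks in (\ref{eq:diag-for-2-functoriality-collection}), reducing via Lemma \ref{lem:Cat-dpb-along-dopfib-fam} and the $\tn{Fam}$ description of Example \ref{exam:polyfunctor-Cat-middle-dopfib} exactly as you do. Your write-up simply supplies the details the paper leaves to the reader.
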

Lemmas \ref{lem:endofunctor-from-collection}-\ref{lem:endofunctor-from-collection-2cell-map} together describe the endo-2-functor of $\Cat/I$ corresponding to a collection $T$ with object set $I$ in terms of labelled operations. We now extend this to a description of the 2-monad corresponding to an operad. In the proof of Proposition \ref{prop:SMultiCat->PolyMndCat} we obtained an explicit understanding of how the identity arrows arrows of an operad provide the unit data for a monad in $\Polyc {\Cat}$. Putting this together with the explicit description of the homomorphism $\PFun {\Cat} : \Polyc {\Cat} \to \TwoCat$ of 2-bicategories, we obtain
\begin{lem}\label{lem:unit-of-operad}
Let $T$ be an operad with object set $I$. Given an object $X$ of $\Cat/I$ one has
\[ \begin{array}{lccr} {\eta^T_X(x) = (1_i,(x))} &&& {\eta^T_X(\beta) = (1_1,(\beta))} \end{array} \]
for any $x$ and $\beta : x \to y$ in $X_i$.
\end{lem}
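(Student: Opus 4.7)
The plan is to transport the unit $u_T : 1_I \to P_T$ of the polynomial monad $P_T$ -- identified during the proof of Proposition \ref{prop:SMultiCat->PolyMndCat} -- across the 2-bicategory homomorphism $\PFun{\Cat}$ to obtain $\eta^T : 1_{\Cat/I} \to T$. Recall from that proof that the components of $u_T$ are $u_{T,1} : I \to B_T$ sending $i \mapsto 1_i$ and $u_{T,2} : I \to E_T$ sending $i \mapsto (1_i,1)$, with $t_T u_{T,1} = 1_I$, $p_T u_{T,2} = u_{T,1}$ and $s_T u_{T,2} = 1_I$; in particular the fibre $p_T^{-1}\{1_i\}$ is the singleton $\{(1_i,1)\}$.

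I would then compute $\eta^T_X : X \to TX$ directly from the pullback/distributivity-pullback construction of $TX$ recalled in Example \ref{exam:polyfunctor-Cat-middle-dopfib}. The unit arises by applying the universal property of these squares to the data $(u_{T,1}, u_{T,2})$ together with the identity polynomial $(1_I,1_I,1_I)$ on $X$. Concretely, an object $x \in X_i$ is assigned the pair $(b,h) = (u_{T,1}(i), h_x)$, where $h_x : p_T^{-1}\{1_i\} \to X$ is the unique functor sending the singleton element $(1_i,1)$ to $x$; uniqueness is forced by discreteness of the fibre, and the compatibility over $I$ follows from $s_T u_{T,2} = 1_I$. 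Under the identification of $TX$ with labelled operations provided by Lemma \ref{lem:endofunctor-from-collection}, the object $(u_{T,1}(i), h_x)$ corresponds exactly to $(1_i, (x))$.

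For the arrow formula, the same argument applied using the 2-dimensional universal property of distributivity pullbacks (as in the discussion preceding Lemma \ref{lem:endofunctor-from-collection-2cell-map}) determines $\eta^T_X(\beta)$, for $\beta : x \to y$ in $X_i$, to be a morphism of $TX$ whose underlying element of $B_T$ is the unique automorphism of $1_i$, namely the identity permutation $1_1 \in \Sigma_1$, and whose single fibre component is $\beta$. Under the identification of Lemma \ref{lem:endofunctor-from-collection}, this morphism is precisely $(1_1, (\beta))$. The potential pitfall I anticipate is keeping consistent track of the several pullback identifications in passing between the polynomial picture and the labelled-operations picture, but once the polynomial-monad unit $u_T$ is pinned down as above, the conclusion is a mechanical unwinding. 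Naturality in $X$ is immediate from Lemma \ref{lem:endofunctor-from-collection-arrow-map}, since $Tf(1_i,(x)) = (1_i,(fx))$.
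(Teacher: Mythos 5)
Your proposal is correct and follows essentially the same route as the paper: the paper likewise obtains this lemma by combining the identification of the unit data $u_T$ (the functors $u_{T,1}$, $u_{T,2}$ with singleton fibres $p_T^{-1}\{1_i\}$) from the proof of Proposition \ref{prop:SMultiCat->PolyMndCat} with the explicit description of $\PFun{\Cat}$ via pullbacks and distributivity pullbacks, exactly as you unwind it. The paper leaves the mechanical unwinding implicit, whereas you spell it out, but there is no difference in substance.
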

Similarly from the explicit understanding of how the compositions of an operad give rise to the multiplication data for a monad in $\Polyc {\Cat}$ obtained in Proposition \ref{prop:SMultiCat->PolyMndCat}, we further obtain
\begin{lem}\label{lem:mult-of-operad}
Let $T$ be an operad with object set $I$ and $X \in \Cat/I$. Then the effect of $\mu^T_X$ on objects is given by
\[ \mu^T_X(\alpha,(\alpha_j,(x_{jk})_k)_j) = (\alpha(\alpha_j)_j,(x_{jk})_{jk}). \]
The effect of $\mu^T_X$ on an arrow
\[ (\rho,(\rho_j,(\beta_{jk})_k)_j) : (\alpha\rho,(\alpha_j\rho_j,(x_{jk})_k)_j) \longrightarrow (\alpha,(\alpha_j,(y_{jk})_k)_j) \]
of $T^2X$ is
\[ (\rho(\rho_j)_j,(\beta_{jk})_{jk}) : ((\alpha(\alpha_j)_j)(\rho(\rho_j)_j),(x_{jk})_{jk}) \longrightarrow (\alpha(\alpha_j)_j,(y_{jk})_{jk}). \]
\end{lem}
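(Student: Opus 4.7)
The plan is to derive $\mu^T_X$ directly from the description of $m_T : P_T \comp P_T \to P_T$ given in the proof of Proposition \ref{prop:SMultiCat->PolyMndCat}, applying the 2-bicategory homomorphism $\PFun{\Cat}$ componentwise and using the formalism of labelled operations developed in Example \ref{exam:polyfunctor-Cat-middle-dopfib} and Lemmas \ref{lem:endofunctor-from-collection}--\ref{lem:endofunctor-from-collection-2cell-map}.

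First I would identify $T^2X$ in labelled-operation terms. The 2-functor $T^2$ is the image under $\PFun{\Cat}$ of the composite polynomial $P_T \comp P_T$, so by Example \ref{exam:polyfunctor-Cat-middle-dopfib} applied to $(s_T^{(2)},p_T^{(2)},t_T^{(2)})$, an object of $T^2X$ is a pair $(b,h)$ with $b$ an object of $B_T^{(2)}$ and $h$ a labelling by $X$ of the fibre $(p_T^{(2)})^{-1}\{b\}$ in $E_T^{(2)}$. The explicit analysis given in the proof of Proposition \ref{prop:SMultiCat->PolyMndCat} identifies such a $b$ with the data $(\alpha,(\alpha_j)_j)$ of composable operations of $T$, and the corresponding fibre in $E_T^{(2)}$ as described just before (\ref{eq:arrow-of-E2}), with coordinates $(j,k)$ where $1 \leq j \leq n$ and $1 \leq k \leq m_j$. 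Labelling by $X$ then amounts to assigning $x_{jk} \in X_{i_{jk}}$, and the arrows are read off from (\ref{eq:arrow-of-B2}) and (\ref{eq:arrow-of-E2}) via the same reasoning that yielded Lemmas \ref{lem:endofunctor-from-collection} and \ref{lem:endofunctor-from-collection-arrow-map}.

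Second, I would compute $\mu^T_X$ by transporting $m_T$ through $\PFun{\Cat}$. The $B$-component $m_{T,1} : B_T^{(2)} \to B_T$ was identified in the proof of Proposition \ref{prop:SMultiCat->PolyMndCat} as operadic substitution $(\alpha,(\alpha_j)_j) \mapsto \alpha \comp (\alpha_j)_j$ on objects and as $(\rho,(\rho_j)_j) \mapsto \rho(\rho_j)_j$ on arrows, where the latter permutation is the substituted permutation on $\sum_j m_j$ symbols. The $E$-component $m_{T,2} : E_T^{(2)} \to E_T$ is determined on fibres by the identification of $p_T^{-1}\{\alpha \comp (\alpha_j)_j\}$ with the concatenation of the fibres of the $\alpha_j$. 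Since $\mu^T_X$ is obtained by post-composition with $(m_{T,1},m_{T,2})$ in the manner dictated by the polynomial functor construction, acting on an object $(\alpha,(\alpha_j,(x_{jk})_k)_j)$ of $T^2X$ sends its $B_T^{(2)}$-coordinate to $\alpha \comp (\alpha_j)_j$ and reindexes the labels $x_{jk}$ along the fibre-concatenation isomorphism, giving exactly $(\alpha \comp (\alpha_j)_j,(x_{jk})_{jk})$. The same reasoning, applied to arrows (\ref{eq:arrow-of-B2}) and (\ref{eq:arrow-of-E2}), yields the stated formula on morphisms.

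The main obstacle is purely a bookkeeping one: confirming that when the substituted permutation $\rho(\rho_j)_j$ acts on the concatenated sequence $(x_{jk})_{jk}$, the resulting source object really is $((\alpha(\alpha_j)_j)(\rho(\rho_j)_j),(x_{jk})_{jk})$ as claimed. This is precisely the content of the equivariance axiom of the operad $T$, which by the proof of Proposition \ref{prop:SMultiCat->PolyMndCat} is exactly the compatibility of $m_{T,1}$ with the arrow structure of $B_T^{(2)}$, so no new verification is needed beyond invoking what has already been established.
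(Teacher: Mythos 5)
Your proposal is correct and follows essentially the same route as the paper, which states this lemma as an immediate consequence of the explicit description of $m_T : P_T \comp P_T \to P_T$ obtained in the proof of Proposition \ref{prop:SMultiCat->PolyMndCat} together with the explicit action of $\PFun{\Cat}$ as unpacked in Example \ref{exam:polyfunctor-Cat-middle-dopfib}. Your closing observation that the form of the source object of the image arrow is exactly the equivariance axiom, already encoded in the arrow map of $m_{T,1}$, is the right way to discharge the only point that might otherwise look like it needs separate checking.
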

In diagramatic terms the object map of $\mu^T_X$ may be depicted as
\[ \xygraph{{\xybox{\xygraph{!{0;(.9,0):(0,.9)::} 
{\scriptstyle{\alpha}} *\xycircle<6pt,6pt>{-}="p0" [ul]
{\scriptstyle{\alpha_1}} *\xycircle<6pt,6pt>{-}="p1" [r(2)]
{\scriptstyle{\alpha_k}} *\xycircle<6pt,6pt>{-}="p2"
"p0" (-"p1",-"p2",-[d],[u(.75)] {...},[u(.5)l(.7)] {\scriptstyle{i_1}},[u(.5)r(.75)] {\scriptstyle{i_k}},[d(.7)r(.15)] {\scriptstyle{i}})
"p1" [u(1)l(.5)] {\scriptstyle{x_{11}}} *\xycircle<7pt,6pt>{-}="q1" [r]
{\scriptstyle{x_{1n_1}}} *\xycircle<10pt,6pt>{-}="q2"
"p1" (-"q1",-"q2",[u(.75)] {...},[u(.45)l(.47)] {\scriptstyle{i_{11}}},[u(.45)r(.55)] {\scriptstyle{i_{1n_1}}})
"p2" [u(1)l(.5)] {\scriptstyle{x_{k1}}} *\xycircle<7pt,6pt>{-}="r1" [r]
{\scriptstyle{x_{kn_k}}} *\xycircle<10pt,6pt>{-}="r2"
"p2" (-"r1",-"r2",[u(.75)] {...},[u(.45)l(.47)] {\scriptstyle{i_{k1}}},[u(.43)r(.55)] {\scriptstyle{i_{kn_k}}})}}}
[r(2)]
{\mapsto}
[r(1.5)]
{\xybox{\xygraph{!{0;(.9,0):(0,1)::} 
{\scriptstyle{\alpha(\alpha_j)_j}} *\xycircle<14pt,7pt>{-}="p0" [ul]
{\scriptstyle{x_{11}}} *\xycircle<7pt,6pt>{-}="p1" [r(2)]
{\scriptstyle{x_{kn_k}}} *\xycircle<10pt,6pt>{-}="p2"
"p0" (-"p1",-"p2",-[d],[u(.75)] {...},[u(.5)l(.7)] {\scriptstyle{i_{11}}},[u(.45)r(.85)] {\scriptstyle{i_{kn_k}}},[d(.7)r(.15)] {\scriptstyle{i}})}}}} \]

\section{Categorical algebras of operads as weak operad morphisms into \underline{Cat}}
\label{sec:Algebras}

Recall that for a general 2-monad $(T,\eta,\mu)$ on a 2-category $\ca K$, an object $A \in \ca K$ can have various types of $T$-algebra structure. A \emph{lax $T$-algebra structure} on $A$ consists of an arrow $a:TA \to A$, coherence 2-cells $\overline{a}_0:1_A \to a\eta_A$ and $\overline{a}_2:aT(a) \to a\mu_A$ such that
\[ \xygraph{*{\xybox{\xygraph{!{0;(2,0):(0,.5)::} {a}="l" [r] {a\eta_Aa}="m" [d] {a}="r" "l":"m"^-{\overline{a}_0a}:"r"^-{\overline{a}_2\eta_{TA}}:@{<-}"l"^-{\id}}}}
[r(4.25)]
*!(0,.1){\xybox{\xygraph{!{0;(2.5,0):(0,.4)::} {aT(a)T^2(a)}="tl" [r] {a\mu_AT^2(a)}="tr" [d] {a\mu_A\mu_{TA}}="br" [l] {aT(a)T(\mu_A)}="bl" "tl":"tr"^-{\overline{a}_2T^2(a)}:"br"^-{\overline{a}_2\mu_{TA}}:@{<-}"bl"^-{\overline{a}_2T(\mu_A)}:@{<-}"tl"^-{aT(\overline{a}_2)}}}}
[r(4.25)]
*{\xybox{\xygraph{!{0;(2,0):(0,.5)::} {a}="l" [l] {aT(a)T(\eta_A)}="m" [d] {a}="r" "l":"m"_-{aT(\overline{a}_0)}:"r"_-{\overline{a}_2T(\eta_A)}:@{<-}"l"_-{\id}}}}} \]
commute. We denote a lax $T$-algebra as a pair $(A,a)$ leaving the coherence data $\overline{a}_0$ and $\overline{a}_2$ implicit. When these coherences are isomorphisms $(A,a)$ is called a \emph{pseudo} $T$-algebra, and when they are identities $(A,a)$ is called a \emph{strict} $T$-algebra.

Similarly, one has various types of $T$-algebra morphism structure on $f : A \to B$ in $\ca K$, where $A$ and $B$ underlie lax $T$-algebras $(A,a)$ and $(B,b)$. A \emph{lax morphism} $(A,a) \to (B,b)$ is a pair $(f,\overline{f})$, where $f:A \to B$ and $\overline{f}:bT(f) \to fa$ such that
\[ \xygraph{{\xybox{\xygraph{{f}="l" [dl] {bT(f)\eta_A}="m" [r(2)] {fa\eta_A}="r" "l":"m"_-{\overline{b}_0f}:"r"_-{\overline{f}\eta_A}:@{<-}"l"_-{f\overline{a}_0}}}}
[r(6)]
{\xybox{\xygraph{!{0;(2,0):(0,.5)::} {bT(b)T^2(f)}="p1" [r(2)] {b\mu_BT^2(f)}="p2" [d] {fa\mu_A}="p3" [l] {faT(a)}="p4" [l] {bT(fa)}="p5" "p1":"p2"^-{\overline{b}_2T^2(f)}:"p3"^-{\overline{f}\mu_A}:@{<-}"p4"^-{f\overline{a}_2}:@{<-}"p5"^-{\overline{f}T(a)}:@{<-}"p1"^-{bT(\overline{f})}}}}} \]
commute. Modifying this definition by reversing the direction of $\overline{f}$ gives the notion of \emph{colax morphism}, when $\overline{f}$ is an isomorphism $f$ is said to be a \emph{pseudo morphism}, and when $\overline{f}$ is an identity $f$ is said to be a \emph{strict morphism}.

Of course, there are also algebra 2-cells. Given lax $T$-algebras $(A,a)$ and $(B,b)$, and lax morphisms of $T$-algebras $(f,\overline{f})$ and $(g,\overline{g}) : (A,a) \to (B,b)$, an \emph{algebra 2-cell} $(f,\overline{f}) \to (g,\overline{g})$ is a 2-cell $\psi : f \to g$ in $\ca K$ such that $(\psi a)\overline{f} = \overline{g}(bT(\psi))$. Algebra 2-cells between colax morphisms are defined similarly. As such, to a 2-monad $T$ one can associate a variety of different 2-categories of algebras. The established notation for these, see for instance \cite{BWellKellyPower-2DMndThy, Lack-Codescent} is given in the following table.

\begin{table}[h]
\centering
\begin{tabular}{|l|l|l|}
\hline
Name & Objects & Arrows \\ \hline \hline
$\LaxAlg{T}$ & lax $T$-algebras & lax morphisms \\ \hline
$\PsAlgl{T}$ & pseudo $T$-algebras & lax morphisms \\ \hline
$\PsAlg{T}$ & pseudo $T$-algebras & pseudo morphisms \\ \hline
$\PsAlgs{T}$ & pseudo $T$-algebras & strict morphisms \\ \hline
$\Algl{T}$ & strict $T$-algebras & lax morphisms \\ \hline
$\Alg{T}$ & strict $T$-algebras & pseudo morphisms \\ \hline
$\Algs{T}$ & strict $T$-algebras & strict morphisms \\ \hline
\end{tabular}
\end{table}
\noindent In each case, the 2-cells are just the $T$-algebra 2-cells between the appropriate $T$-algebra morphisms.

We denote by $\CatAsOp$ the $\Cat$-operad whose objects are small categories and whose homs are given by the functor categories
\[ \begin{array}{rcl} {\CatAsOp((A_j)_{1{\leq}j{\leq}n}; B)} & = &
{\left[ \prod_{j=1}^n A_j,B \right].} \end{array} \]
In Theorem \ref{thm:categorical-algebras-of-operads}, for an operad $T$, we describe the various types of algebras of the corresponding 2-monad as weak operad morphisms into $\CatAsOp$. Then in Theorem \ref{thm:catalg-morphisms-for-operads} we establish the corresponding description of $T$-algebra morphisms as the appropriate type of natural transformation between weak operad morphisms $T \to \CatAsOp$, and in Theorem \ref{thm:catalg-2cells-for-operads} we do the same for algebra 2-cells.
\begin{defn}\label{defn:lax-morphism-into-CatAsOp}
Let $T$ be an operad with object set $I$. A \emph{lax morphism} of operads $H : T \to \CatAsOp$ consists of
\begin{itemize}
\item $\forall \, i \in I$, a category $H_i$.
\item $\forall \, \alpha : (i_j)_{1{\leq}j{\leq}n} \to i$ in $T$, a functor $H_{\alpha} : \prod_{j=1}^n H_{i_j} \to H_i$.
\item $\forall \, \alpha$ and $\rho \in \Sigma_n$, a natural transformation $\xi_{\alpha,\rho} : H_{\alpha\rho}c_{\rho} \to H_{\alpha}$, where $c_{\rho} : \prod_j H_i \iso \prod_j H_{i_{\rho j}}$ is given by permuting the factors according to $\rho$.
\item $\forall \, i \in I$, a natural transformation $\nu_i : 1_{H_i} \to H_{1_i}$.
\item $\forall \, (\alpha,(\beta_j)_j)$ where $\alpha : (i_j)_{1{\leq}j{\leq}n} \to i$ and $\beta_j : (i_{jk})_k \to i_j$ are in $T$, a natural transformation $\sigma_{\alpha,(\beta_j)_j} : H_{\alpha}(\prod_j H_{\beta_j}) \to H_{\alpha(\beta_j)_j}$.
\end{itemize}
such that $\xi_{\alpha,1} = \id$, $\xi_{\alpha,\rho_1\rho_2} = \xi_{\alpha,\rho_1}(\xi_{\alpha\rho_1,\rho_2}c_{\rho_1})$, and
\[ \xygraph{{\xybox{\xygraph{!{0;(1,0):(0,1)::} {H_{\alpha}}="p0" [r(2)] {H_{\alpha}\prod_jH_{1_{i_j}}}="p1" [dl] {H_{\alpha}}="p2" "p0":"p1"^-{\id\cdot\prod_j\nu}:"p2"^-{\sigma}:@{<-}"p0"^-{\id}}}}
[r(3.5)d(.02)]
{\xybox{\xygraph{!{0;(1,0):(0,1)::} {H_{\alpha}}="p0" [r(2)] {H_{1_i}H_{\alpha}}="p1" [dl] {H_{\alpha}}="p2" "p0":"p1"^-{\nu\cdot\id}:"p2"^-{\sigma}:@{<-}"p0"^-{\id}}}}
[r(4.5)d(.07)]
{\xybox{\xygraph{!{0;(3,0):(0,.3333)::} {H_{\alpha\rho}\prod_jH_{\beta_j,\rho_j}}="p0" [r] {H_{(\alpha(\beta_j)_j)(\rho(\rho_j)_j)}}="p1" [d] {H_{\alpha(\beta_j)_j}}="p2" [l] {H_{\alpha}\prod_jH_{\beta_j}}="p3" "p0":"p1"^-{\sigma}:"p2"^-{\xi}:@{<-}"p3"^-{\sigma}:@{<-}"p0"^-{\xi\prod_j\xi}}}}} \]
\[ \xygraph{!{0;(3,0):(0,.3333)::} {H_{\alpha}\prod_jH_{\beta_j}\prod_{j,k}H_{\gamma_{jk}}}="p0" [r(2)] {H_{\alpha}\prod_j(H_{\beta_j}\prod_kH_{\gamma_{jk}})}="p1" [d] {H_{\alpha}\prod_jH_{\beta_j(\gamma_{jk})_k}}="p2" [l] {H_{\alpha(\beta_j(\gamma_{jk})_k)_j}}="p3" [l] {H_{\alpha(\beta_j)_j}\prod_{j,k}H_{\gamma_{jk}}}="p4" "p0":"p1"^-{\id \cdot c_{\tn{sh}_n}}:"p2"^-{\id\cdot\prod_j\sigma}:"p3"^-{\sigma}:@{<-}"p4"^-{\sigma}:@{<-}"p0"^-{\sigma\cdot\id}} \]
commute, where $\tn{sh}_n \in \Sigma_{2n}$ is the ``shuffle'' permutation{\footnotemark{\footnotetext{As an endofunction of $\{1,...,2n\}$, $\tn{sh}_n$ is defined by $\tn{sh}_n(j) = \frac{j+1}{2}$ when $j$ is odd, and $\tn{sh}_n(j) = n + \frac{j}{2}$ when $j$ is even.}}}.
\end{defn}
\begin{defn}\label{defn:lax-morphism-into-CatAsOp-variants}
In the context of Definition \ref{defn:lax-morphism-into-CatAsOp} the functors $H_{\alpha}$ are called the \emph{products}, and the natural transformations $\xi_{\alpha,\rho}$, $\nu_i$ and $\sigma_{\alpha,(\beta)_j}$ are called the \emph{symmetries}, \emph{units} and \emph{substitutions} for $H$. When the units and substitutions are invertible, $H$ is said to be a \emph{pseudo morphism}, and when they are identities $H$ is said to be a \emph{strict morphism}. 
\end{defn}
Clearly the symmetries are isomorphisms. However even for a strict morphism, the symmetries need not be identities.
\begin{defn}\label{defn:commutative-morphism-into-CatAsOp}
In the context of Definitions \ref{defn:lax-morphism-into-CatAsOp} and \ref{defn:lax-morphism-into-CatAsOp-variants}, a lax morphism $H : T \to \CatAsOp$ is \emph{commutative} when its symmetries are identities.
\end{defn}
For any operad $T$ with object set $I$ and symmetric monoidal category $(\ca V,\tensor)$, it is standard to consider algebras of $T$ in $\ca V$. In explicit terms such an algebra consists of an object $H_i$ in $\ca V$ for each $i \in I$, morphisms $H_{\alpha} : \bigotimes_j H_{i_j} \to H_i$ for any $\alpha : (i_j)_j \to i$ in $T$, satisfying
\[ \begin{array}{lcccr} {H_{\alpha\rho} = H_{\alpha}c_{\rho}} &&
{H_{1_i} = 1_{H_i}} &&
{H_{\alpha(\beta_j)_j} = H_{\alpha}(\bigotimes_j H_{\beta_j})} \end{array} \]
for all $i \in I$, $\alpha : (i_j)_{1{\leq}j{\leq}n} \to i$ and $\beta_j : (i_{jk})_k \to i_j$ in $T$, and $\rho \in \Sigma_n$. Thus in particular one has
\begin{exam}\label{exam:Cat-algebras-as-commutative-strict-morphisms}
A commutative strict morphism $H : T \to \CatAsOp$ is the same thing as an algebra of the operad $T$ in $(\Cat,\times)$.
\end{exam}
On the other hand the most fundamental general class of examples which are rarely commutative is
\begin{exam}\label{exam:smoncat-as-pseudo-operad-morphism}
A symmetric monoidal category $(\ca V,\tensor)$ is the same thing as a pseudo morphism $\ca V : \Com \to \CatAsOp$. The unique object of $\Com$ is sent to the underlying category also denoted as $\ca V$, if $\alpha$ is the unique morphism of $\Com$ of arity $n$ then $\ca V_{\alpha}$ is the tensor product functor $\tensor : \ca V^n \to \ca V$, and the rest of the data of $\ca V : \Com \to \CatAsOp$ corresponds exactly to the coherence morphisms of $\ca V$. Lax morphisms $\Com \to \CatAsOp$ are also very well studied and are referred to either as \emph{symmetric lax monoidal categories} or \emph{functor operads} in the literature.
\end{exam}
This last example generalises in the following way.
\begin{exam}\label{exam:smoncat-as-pseudo-operad-morphism-general-version}
Let $\ca V$ be a symmetric monoidal category and $T$ be an operad. Then one has a pseudo morphism $\ca V_{\bullet} : T \to \CatAsOp$ in which $(\ca V_{\bullet})_i = \ca V$, $(\ca V_{\bullet})_{\alpha}$ where $\alpha$'s source is a sequence of length of $n$ is $\tensor : \ca V^{n} \to \ca V$, and the units, substitutions and symmetries are given by the coherences for $\ca V$'s symmetric monoidal structure. In particular when $\ca V$ is the terminal category, $\ca V_{\bullet}$ is denoted as $1 : T \to \CatAsOp$, which clearly corresponds via Example \ref{exam:Cat-algebras-as-commutative-strict-morphisms} to the terminal algebra in $(\Cat,\times)$ of the operad $T$.
\end{exam}
\begin{exam}\label{exam:lax-morphisms-for-categories}
As discussed in Remark \ref{rem:Cat-in-Opd} an operad $T$ with only unary arrows is a category. In this case a lax, pseudo or strict morphism $T \to \CatAsOp$ is the same thing as a lax functor, pseudo functor or functor $T \to \Cat$, in the usual senses, respectively. Moreover all such morphisms $T \to \CatAsOp$ are commutative.
\end{exam}
The various types of morphisms $H : T \to \CatAsOp$ defined above will now be exhibited as being structure on the underlying object of $\Cat/I$ whose fibre over $i \in I$ is the category $H_i$. We abuse notation and refer to this object of $\Cat/I$ also as $H$, or as $H \to I$.
\begin{thm}\label{thm:categorical-algebras-of-operads}
Let $T$ be an operad with object set $I$, regard it as a 2-monad on $\Cat/I$ as in Section \ref{sec:SMultiCats-Poly}, and let $H \to I$ be an object of $\Cat/I$. To give $H$ the structure of a lax, pseudo or strict $T$-algebra is to give it the structure of a lax, pseudo or strict morphism $H : T \to \CatAsOp$ respectively, in the sense defined above.
\end{thm}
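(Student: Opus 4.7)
The plan is to translate, piece by piece, an algebra structure on $H \to I$ into the data and axioms of Definition \ref{defn:lax-morphism-into-CatAsOp}, using the explicit descriptions of $T$, $\eta^T$ and $\mu^T$ supplied by Lemmas \ref{lem:endofunctor-from-collection}--\ref{lem:mult-of-operad}. The correspondence is a chain of bijections at the level of raw data, each of which identifies the coherence equations on the two sides.

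First I would unpack the algebra action $h : TH \to H$ over $I$. Since an object of $TH$ over $i$ is a labelled operation $(\alpha,(x_j)_j)$, fixing $\alpha : (i_j)_j \to i$ and letting the $x_j$ vary gives a functor $H_\alpha := h(\alpha,-) : \prod_j H_{i_j} \to H_i$. The remaining object data of $h$ is accounted for since every object of $TH$ has this form. On morphisms, every arrow of $TH$ uniquely factors as $(1,(\gamma_j)_j) \comp (\rho,(1)_j)$, and functoriality of $h$ lets us isolate the effect on each factor: the $(1,(\gamma_j)_j)$ part is forced by functoriality of $H_\alpha$, while $(\rho,(1)_j) : (\alpha\rho,(x_{\rho j})_j) \to (\alpha,(x_j)_j)$ gives a natural transformation $\xi_{\alpha,\rho} : H_{\alpha\rho} c_\rho \to H_\alpha$. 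Functoriality of $h$ in $\rho$ is precisely the two axioms $\xi_{\alpha,1} = \id$ and $\xi_{\alpha,\rho_1\rho_2} = \xi_{\alpha,\rho_1}(\xi_{\alpha\rho_1,\rho_2} c_{\rho_1})$, and the symmetries are automatically invertible because the arrows $(\rho,(1)_j)$ are isomorphisms in $TH$.

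Next I would decode the coherence 2-cells. By Lemma \ref{lem:unit-of-operad}, $\eta^T_H(x) = (1_i,(x))$, so the unit coherence $\bar h_0 : 1_H \to h\eta^T_H$ is precisely a family of natural transformations $\nu_i : 1_{H_i} \to H_{1_i}$. By Lemma \ref{lem:mult-of-operad}, $\mu^T_H(\alpha,(\beta_j,(x_{jk})_k)_j) = (\alpha(\beta_j)_j,(x_{jk})_{jk})$, so the multiplication coherence $\bar h_2 : hT(h) \to h\mu^T_H$ has components $H_\alpha(\prod_j H_{\beta_j}(x_{jk})_k) \to H_{\alpha(\beta_j)_j}(x_{jk})_{jk}$, which assemble into the substitutions $\sigma_{\alpha,(\beta_j)_j}$. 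The naturality of $\bar h_2$ with respect to the symmetry-type morphisms of $T^2H$ described in the proof of Proposition \ref{prop:SMultiCat->PolyMndCat} gives exactly the commuting square relating $\sigma$ with $\xi$ (the third hexagon in Definition \ref{defn:lax-morphism-into-CatAsOp}); naturality in the remaining direction is automatic from that of each $\bar h_2$-component.

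Finally I would check that the three lax algebra axioms correspond to the remaining axioms in Definition \ref{defn:lax-morphism-into-CatAsOp}. The two unit axioms $\bar h_2(\bar h_0 h) = \id$ and $\bar h_2(hT(\bar h_0)) = \id$ reduce, after applying the formulas for $\eta^T$ and $\mu^T$ and observing that $\alpha(1_{i_j})_j = \alpha = 1_i \comp (\alpha)$, to the two unit triangles relating $\sigma$ and $\nu$. The associativity axiom for $\bar h_2$ becomes, after a similar expansion using $\mu^T$ at $T^3H$, the pentagon for $\sigma$ in Definition \ref{defn:lax-morphism-into-CatAsOp}. The shuffle permutation $\tn{sh}_n$ appears because the two ways of iterating $\mu^T$ produce the arguments $(x_{jk})_{jk}$ in two different orderings related by $\tn{sh}_n$, so $\xi_{-,\tn{sh}_n}$ is forced to mediate between them. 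Restricting this bijection to invertible (resp.\ identity) $\bar h_0, \bar h_2$ yields the pseudo (resp.\ strict) case of the theorem.

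The main obstacle is the bookkeeping around the shuffle in the pentagon: one must verify that when the lax algebra associativity square at a specific labelled element $(\alpha,(\beta_j,(\gamma_{jk},(x_{jkl})_l)_k)_j)$ of $T^3H$ is expanded via Lemma \ref{lem:mult-of-operad}, the two composite paths really do live over operations that differ by $\tn{sh}_n$ and no other permutation. This is a direct but careful diagram chase, aided by the observation (essentially the description of $E_T^{(2)}$ in the proof of Proposition \ref{prop:SMultiCat->PolyMndCat}) that the domains of iterated composites are concatenations in a fixed order, so the shuffle is fully determined by the combinatorics of $\mu^T \mu^T_{TH}$ versus $\mu^T T(\mu^T)$.
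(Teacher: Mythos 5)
Your proposal follows essentially the same route as the paper: unpack the action $h : TH \to H$ via the unique levelwise/permutative factorisation of arrows of $TH$ to extract the products $H_{\alpha}$ and symmetries $\xi_{\alpha,\rho}$, then use the explicit formulas for $\eta^T$ and $\mu^T$ from Lemmas \ref{lem:unit-of-operad} and \ref{lem:mult-of-operad} to identify $\overline{h}_0$ and $\overline{h}_2$ with the units and substitutions, and finally match the coherence axioms on each side. The paper packages the first step a little more formally as a strict factorisation system on $TH$ (a functor out of $TH$ is a compatible pair of functors out of the levelwise and permutative subcategories $T_LH$ and $T_RH$), but the content is the same.

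One correction to your account of the ``main obstacle'': by Lemma \ref{lem:mult-of-operad} the multiplication concatenates labels lexicographically, and iterated concatenation is strictly associative, so the two composites $\mu^T\mu^T_{TH}$ and $\mu^T T(\mu^T)$ land on \emph{literally the same} labelled operation, with the $x_{jkl}$ in the same order; no symmetry $\xi_{-,\tn{sh}_n}$ enters the associativity axiom. The $c_{\tn{sh}_n}$ appearing in the pentagon of Definition \ref{defn:lax-morphism-into-CatAsOp} is only the canonical interleaving isomorphism of product categories needed so that $\id\cdot\prod_j\sigma_{\beta_j,(\gamma_{jk})_k}$ and $\sigma_{\alpha,(\beta_j)_j}\cdot\id$ can be written with a common source; it is a strict reindexing of factors, not one of the symmetries of $H$. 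With that understood, the diagram chase you anticipate is routine rather than delicate.
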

\begin{proof}
Before proceeding to unpack what the action $a : TH \to H$ amounts to, we observe first that there is a canonical factorisation system{\footnotemark{\footnotetext{This is in a very strict algebraic sense, of being a decomposition of $TH$ as a composite monad in $\Span {\Set}$ via a distributive law, a situation that was studied in \cite{RosebrughWood-DistLawsFact}.}}} on $TH$. Recall that a general arrow of $TH$ is of the form $(\rho,(\gamma_j)_j) : (\alpha\rho,(x_j)_j) \to (\alpha,(y_j)_j)$ where $\alpha : (i_j)_{1{\leq}j{\leq}n} \to i$ is from $T$, $\rho \in \Sigma_n$ and $\gamma_j : x_j \to y_{\rho j}$ is from $H_{\rho j}$, and we say that $(\rho,(\gamma_j)_j)$ is \emph{levelwise} when $\rho$ is an identity, and \emph{permutative} when the $\gamma_j$ are all identities. Both these types of maps are closed under composition and contain the identities, a map is an identity iff it is both levelwise and permutative, and a general map $(\rho,(\gamma_j)_j)$ factors in a unique way
\[ (\alpha\rho,(x_j)_j) \xrightarrow{(1_n,(\gamma_j)_j)} (\alpha\rho,(y_{\rho j})_j) \xrightarrow{(\rho,(1_{y_{\rho j}})_j)} (\alpha,(y_j)_j) \]
as a levelwise map followed by a permutative map. Denoting the subcategory of $TH$ containing all the levelwise maps as $T_LH$, and the subcategory of $TH$ containing all the permutative maps as $T_RH$, to give a functor $f : TH \to C$ into any category $C$, is to give functors $f_L : T_LH \to C$ and $f_R : T_RH \to C$ which agree on objects, and with arrow maps compatible in the following sense -- if $(r : a \to b, l : b \to c)$ is a composable pair in $TH$ in which $l$ is levelwise and $r$ permutative, then $f_L(l)f_R(r) = f_R(r')f_L(l')$ in $C$, where $lr = r'l'$ is the levelwise-permutative factorisation of $lr$.

The object map of $a : TH \to H$ gives $a(\alpha,(y_j)_j) \in H_i$ for each $(\alpha,(y_j)_j)$ as above. Allowing the $(y_j)_j$ to vary, this amounts to the object map of a functor $H_{\alpha} : \prod_{j=1}^n H_{i_j} \to H_i$ for each $\alpha$. The arrow map of $a_L$ gives $H_{\alpha}(\gamma_j)_j : H_{\alpha}(x_j)_j \to H_{\alpha}(y_j)_j$ in $H_i$, for each $(\alpha,(y_j)_j)$ as above, and $\gamma_j : x_j \to y_j$ in $H_j$. Allowing the $(\gamma_j)_j$ to vary, we see that to give $a_L$ is to give functors $H_{\alpha}$ for all $\alpha$. The data of the arrow map of $a_R$ gives, for each $(\alpha,(y_j)_j)$ as above and $\rho \in \Sigma_n$, a morphism $H_{\alpha\rho}(y_{\rho j})_j \to H_{\alpha}(y_j)_j$, and allowing the $(y_j)_j$ to vary, this amounts to the components of a natural transformation $\xi_{\alpha,\rho} : H_{\alpha \rho} \to H_{\alpha}c_{\rho}$. In these terms the functoriality of $a_R$ amounts to the equations $\xi_{\alpha,1} = \id$ and $\xi_{\alpha,\rho_1\rho_2} = \xi_{\alpha,\rho_1}(\xi_{\alpha\rho_1,\rho_2}c_{\rho_1})$ and the compatibility of the arrow maps of $a_L$ and $a_R$ amounts to the naturality of the $\xi_{\alpha,\rho}$. Thus we have verified that to give $a : TH \to H$ is to give the products and symmetries of a lax morphism $H : T \to \CatAsOp$.

By the explicit description of the unit of the 2-monad $T$ given in Lemma \ref{lem:unit-of-operad}, to give $\overline{a}_0 : 1 \to a\eta^T$ is to give $\nu_i : 1_{H_i} \to H_{1_i}$, and $\overline{a}_0$ is invertible or an identity iff the $\nu_i$ are so. By the explicit description of the multiplication of the 2-monad $T$ given in Lemma \ref{lem:mult-of-operad}, one can similarly reconcile the data of $\overline{a}_2$ with that of the substitutions $\sigma_{\alpha,(\beta)_j}$. The naturality of $\overline{a}_2$ with respect to levelwise maps corresponds to the naturality of the $\sigma_{\alpha,(\beta)_j}$, the naturality of $\overline{a}_2$ with respect to permutative maps corresponds to the axiom of compatibility between subtitutions and symmetries, and the axioms involving just units and subtitutions amount to the lax algebra coherence axioms for $(\overline{a}_0,\overline{a}_2)$.
\end{proof}
By from Example \ref{exam:smoncat-as-pseudo-operad-morphism} and Theorem \ref{thm:categorical-algebras-of-operads} one recovers the fact that when $T = \Com$, pseudo $T$-algebras are exactly symmetric monoidal categories. The following example is an interesting variant of this.
\begin{exam}\label{exam:Ass-vs-M}
The single-coloured operad $\Ass$ for monoids has an $n$-ary operation for each permutation $\rho \in \Sigma_n$. By Theorem \ref{thm:categorical-algebras-of-operads} a strict algebra structure of the 2-monad $\Ass$ on $\ca V \in \Cat$, consists of an $n$-ary tensor product functor $\bigotimes_{\rho} : \ca V^n \to \ca V$ for each permutation $\rho \in \Sigma_n$, and for $\rho_1, \rho_2 \in \Sigma_n$, an isomorphism
\[ \xygraph{{\ca V^n}="p0" [r(2)] {\ca V^n}="p1" [dl] {\ca V}="p2" "p0":"p1"^-{c_{\rho_2}}:"p2"^-{\bigotimes_{\rho_1}}:@{<-}"p0"^-{\bigotimes_{\rho_1\rho_2}} "p0" [d(.5)r(.85)] :@{=>}[r(.3)]^-{\xi_{\rho_1,\rho_2}}} \]
in which $c_{\rho_2}$ permutes the factors according to $\rho_2$. This data must satisfy the axioms $\bigotimes_{1_1} = 1_{\ca V}$, $\bigotimes_{\rho}(\bigotimes_{\rho_j})_j = \bigotimes_{\rho(\rho_j)_j}$, $\xi_{\alpha,1} = \id$, $\xi_{\alpha,\rho_1\rho_2} = \xi_{\alpha,\rho_1}(\xi_{\alpha\rho_1,\rho_2}c_{\rho_1})$, and $\xi_{\rho}(\xi_{\rho_j})_j = \xi_{\rho(\rho_j)_j}$. When the $\xi$'s are identities, all the $n$-ary tensor products coincide giving a strict monoidal structure on $\ca V$. Thus a general strict $\Ass$-algebra structure on $\ca V$ is a fattened version of a monoidal structure.
\end{exam}
We now proceed to understand the algebra morphisms of the 2-monad associated to an operad. To this end we make
\begin{defn}\label{defn:lax-nat}
Let $T$ be an operad with object set $I$. Suppose that $H$ and $K$ are lax morphisms of operads $T \to \CatAsOp$. A \emph{lax-natural transformation}
\[ (f,\overline{f}) : H \longrightarrow K \]
consists of
\begin{itemize}
\item $\forall \, i \in I$, a functor $f_i : H_i \to K_i$.
\item $\forall \, \alpha : (i_j)_{1{\leq}j{\leq}n} \to i$ in $T$, a natural transformation
\[ \begin{array}{c} {\overline{f}_{\alpha} : K_{\alpha}(\prod_{j}f_{i_j}) \longrightarrow f_iH_{\alpha}} \end{array} \]
\end{itemize}
such that
\[ \xygraph{*!(0,-.1){\xybox{\xygraph{!{0;(2,0):(0,.5)::} {K_{\alpha\rho}c_{\rho}\prod_jf_{i_j}}="p0" [r] {f_iH_{\alpha\rho}c_{\rho}}="p1" [d] {f_iH_{\alpha}}="p2" [l] {K_{\alpha}\prod_jf_{i_j}}="p3" "p0":"p1"^-{\overline{f}\cdot\id}:"p2"^-{\id\cdot\xi^H}:@{<-}"p3"^-{\overline{f}}:@{<-}"p0"^-{\xi^K\cdot\id}}}}
[r(3.75)]
*{\xybox{\xygraph{!{0;(1,0):(0,1)::} {f_i}="p0" [dl] {K_{1_i}f_i}="p1" [r(2)] {f_iH_{1_i}}="p2" "p0":"p1"_-{\nu^K\cdot\id}:"p2"_-{\overline{f}}:@{<-}"p0"_-{\id\cdot\nu^H}}}}
[r(4.25)]
*{\xybox{\xygraph{!{0;(2.5,0):(0,.4)::} {K_{\alpha}\prod_jK_{\beta_j}\prod_{j,k}f_{i_{jk}}}="p0" [dr] {K_{\alpha(\beta_j)_j}}="p1" [d] {f_iH_{\alpha(\beta_j)_j}}="p2" [l] {f_iH_{\alpha}\prod_jH_{\beta_j}}="p3" [u] {K_{\alpha}\prod_jf_{i_j}H_{\beta_j}}="p4"
"p0":"p1"^(.6){\sigma^K\cdot\id}:"p2"^-{\overline{f}}:@{<-}"p3"^-{\id\cdot\sigma^H}:@{<-}"p4"^-{\overline{f}\cdot\id}:@{<-}"p0"^-{\id\cdot\prod_j\overline{f}}}}}} \]
commute. Modifying this definition by reversing the direction of the $\overline{f}_{\alpha}$ gives the definition of a \emph{colax-natural transformation}. When the $\overline{f}_i$ are invertible, $f$ is said to be \emph{pseudo-natural}, and when they are identites $f$ is just called a \emph{natural transformation}.
\end{defn}
\begin{exam}\label{exam:lax-nat-Cat}
If as in Example \ref{exam:lax-morphisms-for-categories} $T$ is a category, so that $H$ and $K$ can be regarded as lax functors $T \to \Cat$ in the usual sense, then Definition \ref{defn:lax-nat} in this case gives the usual notions of lax-natural, colax-natural, pseudo-natural and natural transformation $H \to K$.
\end{exam}
\begin{exam}\label{exam:general-operad-algebra}
Let $T$ be an operad and $\ca V$ be a symmetric monoidal category. Then an algebra of the operad $T$ in $\ca V$ (recalled just after Definition \ref{defn:commutative-morphism-into-CatAsOp}) is the same thing as a lax natural transformation $1 \to \ca V_{\bullet}$, where $1$ and $\ca V_{\bullet}$ are the morphisms $T \to \CatAsOp$ defined in Example \ref{exam:smoncat-as-pseudo-operad-morphism-general-version}.
\end{exam}
In the context of Definition \ref{defn:lax-nat} we write $f : H \to K$ for the morphism of $\Cat/I$ whose morphism between fibres over $i \in I$ is $f_i$. The notions just defined will now be seen as structure on $f$.
\begin{thm}\label{thm:catalg-morphisms-for-operads}
Let $T$ be an operad with object set $I$, regard it as a 2-monad on $\Cat/I$ as in Section \ref{sec:SMultiCats-Poly}, and let $f : H \to K$ be a morphism of $\Cat/I$. Suppose also that one has the structure of lax morphism $T \to \CatAsOp$ on both $H$ and $K$. Then to give $f$ the structure of lax, colax, pseudo or strict $T$-algebra morphism, is to give $f$ the structure of lax-natural, colax-natural, pseudo-natural or natural transformation respectively, with respect to the corresponding lax $T$-algebra structures on $H$ and $K$.
\end{thm}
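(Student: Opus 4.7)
The plan is to mirror the structure of the proof of Theorem \ref{thm:categorical-algebras-of-operads}, this time unpacking the 2-cell datum $\overline{f} : bT(f) \to fa$ of a lax $T$-algebra morphism, rather than the action map. Fix lax $T$-algebras $(H,a)$ and $(K,b)$ corresponding via Theorem \ref{thm:categorical-algebras-of-operads} to lax morphisms of operads also denoted $H, K : T \to \CatAsOp$, and let $f : H \to K$ be a morphism of $\Cat/I$ with fibrewise components $f_i : H_i \to K_i$. By Lemma \ref{lem:endofunctor-from-collection-arrow-map} the functor $Tf$ preserves both levelwise and permutative arrows in the sense of the factorisation system on $TH$ introduced in the proof of Theorem \ref{thm:categorical-algebras-of-operads}, so giving a natural transformation $\overline{f} : bT(f) \to fa$ is the same as giving its restrictions to the levelwise and permutative subcategories, with the levelwise-permutative compatibility recorded in that earlier proof.

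Unpacking objectwise, a component of $\overline{f}$ at $(\alpha,(x_j)_j) \in TH$ with $\alpha : (i_j)_j \to i$ is a morphism
\[ \overline{f}_{(\alpha,(x_j)_j)} : K_{\alpha}(f_{i_j}x_j)_j \longrightarrow f_i H_{\alpha}(x_j)_j \]
of $K_i$. Holding $\alpha$ fixed and letting $(x_j)_j$ vary, naturality along the levelwise arrows of $TH$ says precisely that these components assemble into a natural transformation $\overline{f}_{\alpha} : K_{\alpha}\prod_j f_{i_j} \to f_i H_{\alpha}$, so the data of $\overline{f}$ matches the data of a lax-natural transformation as in Definition \ref{defn:lax-nat}. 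Naturality along a permutative arrow $(\rho,(1)_j):(\alpha\rho,(y_{\rho j})_j) \to (\alpha,(y_j)_j)$, using the description of $a$ and $b$ on permutative arrows established in the proof of Theorem \ref{thm:categorical-algebras-of-operads}, translates exactly into the first (symmetry-compatibility) axiom of Definition \ref{defn:lax-nat}.

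It remains to match the lax algebra morphism unit and multiplication axioms with the unit and substitution axioms of Definition \ref{defn:lax-nat}. Using the explicit description of $\eta^T$ from Lemma \ref{lem:unit-of-operad}, the axiom $\overline{f}\eta^T_H \cdot f\overline{a}_0 = \overline{b}_0 f$ evaluated at $x \in H_i$ becomes precisely the unit diagram $\overline{f}_{1_i}\cdot (\nu^K_i f_i) = f_i\nu^H_i$. Using the explicit description of $\mu^T$ from Lemma \ref{lem:mult-of-operad}, the axiom relating $\overline{f}$, $\overline{a}_2$, $\overline{b}_2$ and $\mu^T$, evaluated at a typical object $(\alpha,(\beta_j,(x_{jk})_k)_j)$ of $T^2H$, yields the substitution diagram of Definition \ref{defn:lax-nat}. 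The colax, pseudo and strict cases are then immediate: reversing the direction of $\overline{f}$ gives the colax case, while invertibility or identity of the $\overline{f}_{\alpha}$ corresponds to invertibility or identity of $\overline{f}$.

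The one piece of bookkeeping that requires genuine care is the multiplication step: one must check that the shuffle permutation $\tn{sh}_n$ appearing in the substitution axiom of Definition \ref{defn:lax-nat} is the one produced by $\mu^T$, and one must verify that the naturality of the multiplication axiom with respect to permutative arrows is already a consequence of the symmetry-compatibility axiom already established, so that no further axiom is generated. Both points are handled by tracing through the explicit formula for $\mu^T$ in Lemma \ref{lem:mult-of-operad} in the same manner as the corresponding step in the proof of Theorem \ref{thm:categorical-algebras-of-operads}; this is the main, though still essentially routine, calculation.
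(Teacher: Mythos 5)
Your proposal is correct and follows essentially the same route as the paper's proof: unpack the components of $\overline{f} : bT(f) \to fa$ at objects $(\alpha,(y_j)_j)$ of $TH$, split its naturality along the levelwise--permutative factorisation to recover the naturality of the $\overline{f}_{\alpha}$ and the symmetry-compatibility axiom, and match the unit and multiplication coherence axioms with the unit and substitution axioms of Definition \ref{defn:lax-nat} via Lemmas \ref{lem:unit-of-operad} and \ref{lem:mult-of-operad}. The closing remarks about the shuffle permutation and the redundancy of permutative naturality for $\overline{a}_2$ are exactly the routine verifications the paper leaves implicit.
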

\begin{proof}
Let us write $(a,\overline{a}_0,\overline{a}_2)$ for the action $a : TH \to H$ and coherence cells for the lax $T$-algebra structure on $H$ which corresponds to its lax morphism structure by Theorem \ref{thm:categorical-algebras-of-operads}, and similarly we write $(b,\overline{b}_0,\overline{b}_2)$ in the case of $K$. One has a component of the lax $T$-algebra coherence datum $\overline{f} : bT(f) \to fa$ for each object of $TH$, which is a pair $(\alpha,(y_j)_j)$ where $\alpha : (i_j)_{1{\leq}j{\leq}n} \to i$ is from $T$ and $y_j \in H_{i_j}$, and so such a component is a morphism $K_{\alpha}(fy_j)_j \to fH_{\alpha}(y_j)_j$. Thus the components of $\overline{f} : bT(f) \to fa$ are exactly the components of $\overline{f}_{\alpha}$ for all $\alpha$ for a corresponding lax-natural transformation structure on $f$, and $\overline{f}$ is an isomorphism or an identity iff the $\overline{f}_{\alpha}$ are so. The naturality of $\overline{f}$ with respect to the levelwise maps in $TH$ corresponds exactly to the naturality of the $\overline{f}_{\alpha}$ for all $\alpha$, and the naturality of $\overline{f}$ with respect to the permutative maps corresponds exactly to the axiom on the $\overline{f}_{\alpha}$'s relating to the symmetries of $H$ and $K$. The remaining axioms relating the $\overline{f}_{\alpha}$'s with the units and substitutions of $H$ and $K$ correspond to the lax algebra coherence axioms.
\end{proof}
\begin{rem}\label{rem:colax-algebras-duality}
Reversing the direction of the  units and substitutions in the definition of ``lax morphism'' of operads $H : T \to \CatAsOp$, one obtains the definition of a \emph{colax morphism} $H$. Similarly given a 2-monad $T$ on a 2-category $\ca K$, reversing the direction of the coherence 2-cells $\overline{a}_0$ and $\overline{a}_2$ in the definition of a ``lax $T$-algebra structure'' on $A \in \ca K$, one obtains the definition of \emph{colax $T$-algebra structure} on $A \in \ca K$. For an operad $T$ with object set $I$, to give $h : H \to I$ the structure of a colax morphism $T \to \CatAsOp$ is to give $h^{\op} : H^{\op} \to I^{\op} = I$ the structure of a lax morphism $T \to \CatAsOp$. To give $h$ the structure of a colax $T$-algebra is to give $h^{\op}$ the structure of a lax $T$-algebra. Thus one has versions of Theorems \ref{thm:categorical-algebras-of-operads} and \ref{thm:catalg-morphisms-for-operads} characterising colax $T$-algebras and various kinds of morphisms between them.
\end{rem}
Finally we characterise the algebra 2-cells for the 2-monad on $\Cat/I$ associated to an operad $T$ with object set $I$.
\begin{defn}\label{defn:modification}
Let $T$ be an operad with object set $I$. Suppose that $H$ and $K$ are lax morphisms of operads $T \to \CatAsOp$, and that $(f,\overline{f})$ and $(g,\overline{g}) : H \to K$ are lax-natural transformations. Then a \emph{modification} $\psi : (f,\overline{f}) \to (g,\overline{g})$ consists of natural transformations $\psi_i : f_i \to g_i$ for all $i \in I$, such that
\[ \xygraph{!{0;(2,0):(0,.5)::} {K_{\alpha}(\prod_{j}f_{i_j})}="p0" [r] {f_iH_{\alpha}}="p1" [d] {g_iH_{\alpha}}="p2" [l] {K_{\alpha}(\prod_{j}g_{i_j})}="p3" "p0":"p1"^-{\overline{f}_{\alpha}}:"p2"^-{\psi_i\cdot\id}:@{<-}"p3"^-{\overline{g}_{\alpha}}:@{<-}"p0"^-{\id\cdot\prod_j\psi_{i_j}}} \]
commutes for all $\alpha : (i_j)_{1{\leq}j{\leq}n} \to i$ in $T$.
\end{defn}
\begin{exam}\label{exam:morphism-of-general-operad-algebra}
Let $T$ be an operad, $\ca V$ be a symmetric monoidal category, and $A$ and $B$ be algebras of $T$ in $\ca V$. Recall from Example \ref{exam:general-operad-algebra} that $A$ and $B$ may be regarded as lax natural transformations $1 \to \ca V_{\bullet}$. To give a morphism $A \to B$ of $T$-algebras in $\ca V$, is to give a modification between the corresponding lax natural transformations.
\end{exam}
In the evident way by reversing the directions of the appropriate coherence cells, one defines modifications between colax natural transformations, and algebra 2-cells between colax morphisms of $T$-algebras. It is straightforward to verify that the modifications of Definition \ref{defn:modification} match up with the algebra 2-cells of $T$ as follows.
\begin{thm}\label{thm:catalg-2cells-for-operads}
Let $T$ be an operad with object set $I$, regard it as a 2-monad on $\Cat/I$. Suppose that one has lax morphisms of operads $H$ and $K : T \to \CatAsOp$, and lax-natural (resp. colax-natural) transformations $(f,\overline{f})$ and $(g,\overline{g}) : H \to K$. Then to give a modification $(f,\overline{f}) \to (g,\overline{g})$ is to give an algebra 2-cell between the corresponding lax (resp. colax) morphisms of $T$-algebras.
\end{thm}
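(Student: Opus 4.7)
The plan is to mirror the strategy already used for Theorems \ref{thm:categorical-algebras-of-operads} and \ref{thm:catalg-morphisms-for-operads}, unpacking the algebra 2-cell condition componentwise on $TH$ via the explicit descriptions of $T$, $\eta^T$ and $\mu^T$ from Lemmas \ref{lem:endofunctor-from-collection}--\ref{lem:mult-of-operad}. Throughout, write $(a,\overline{a}_0,\overline{a}_2)$ and $(b,\overline{b}_0,\overline{b}_2)$ for the lax $T$-algebra structures on $H$ and $K$ corresponding to their operad morphism structures, and write $(\overline{f}_\alpha)$, $(\overline{g}_\alpha)$ for the lax-naturality data of $f$ and $g$ matching the algebra morphism cells $\overline{f}:bT(f)\to fa$ and $\overline{g}:bT(g)\to ga$ under Theorem \ref{thm:catalg-morphisms-for-operads}.

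First I would observe that since $I$ is discrete, a 2-cell $\psi:f\to g$ in $\Cat/I$ is precisely a family of natural transformations $\psi_i:f_i\to g_i$ for $i\in I$, matching the data of a modification in Definition \ref{defn:modification}. It remains only to match the axioms. Evaluating the algebra 2-cell equation $(\psi a)\,\overline{f}=\overline{g}\,(bT(\psi))$ at a typical object $(\alpha,(y_j)_j)$ of $TH$, with $\alpha:(i_j)_j\to i$ in $T$ and $y_j\in H_{i_j}$, Lemma \ref{lem:endofunctor-from-collection-2cell-map} gives $T(\psi)_{(\alpha,(y_j)_j)}=(1,(\psi_{i_j,\,y_j})_j)$, so applying $b$ yields $K_\alpha(\psi_{i_j}\cdot y_j)_j$ at that object. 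The components of $\overline{f}$ and $\overline{g}$ are by construction the components of $\overline{f}_\alpha$ and $\overline{g}_\alpha$ at $(y_j)_j$, and $(\psi a)$ at $(\alpha,(y_j)_j)$ is the component of $\psi_i$ at $H_\alpha(y_j)_j$. The resulting equality of composites
\[
K_\alpha(fy_j)_j \xrightarrow{\overline{f}_\alpha} f_iH_\alpha(y_j)_j \xrightarrow{\psi_i} g_iH_\alpha(y_j)_j
\;=\;
K_\alpha(fy_j)_j \xrightarrow{K_\alpha(\psi_{i_j})_j} K_\alpha(gy_j)_j \xrightarrow{\overline{g}_\alpha} g_iH_\alpha(y_j)_j
\]
at each $(y_j)_j$ is precisely the component at $(y_j)_j$ of the modification square in Definition \ref{defn:modification} for $\alpha$.

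To finish I need to check that no further content is imposed by letting $(\alpha,(y_j)_j)$ vary through morphisms of $TH$. Using the levelwise/permutative factorisation established in the proof of Theorem \ref{thm:categorical-algebras-of-operads}, naturality of $\psi a$ and $bT(\psi)$ with respect to the levelwise maps is automatic from the naturality of each $\psi_i$ and the $\overline{f}_\alpha$, $\overline{g}_\alpha$, while naturality with respect to the permutative maps is ensured by the symmetry axioms already built into $\overline{f}$ and $\overline{g}$. So the algebra 2-cell condition is equivalent to the family of squares displayed in Definition \ref{defn:modification}, one for each $\alpha$.

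The colax case is handled by exactly the same argument after reversing the directions of $\overline{f}_\alpha$ and $\overline{g}_\alpha$, or equivalently by applying $(-)^{\op}$ fibrewise as in Remark \ref{rem:colax-algebras-duality}. There is no real obstacle; the only point requiring care is the bookkeeping around the levelwise/permutative decomposition of arrows in $TH$, which is what guarantees that a single modification square for each $\alpha$ encodes the full naturality of the algebra 2-cell.
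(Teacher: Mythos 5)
Your proposal is correct and follows exactly the route the paper intends: the paper omits the proof, declaring just before the statement that the matching of modifications with algebra 2-cells "is straightforward to verify" by the same componentwise unpacking used for Theorems \ref{thm:categorical-algebras-of-operads} and \ref{thm:catalg-morphisms-for-operads}, which is precisely what you carry out. Your only over-caution is the final naturality check: since the algebra 2-cell condition is an equality of natural transformations, it is determined by its components at objects of $TH$, so nothing extra needs to be verified on morphisms.
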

While the algebras of the 2-monad associated to an operad $T$ are different to the algebras of the operad in the usual sense, one does recover the algebras of $T$ in any symmetric monoidal category $\ca V$, from Theorems \ref{thm:categorical-algebras-of-operads}, \ref{thm:catalg-morphisms-for-operads} and \ref{thm:catalg-2cells-for-operads} and Examples \ref{exam:smoncat-as-pseudo-operad-morphism-general-version}, \ref{exam:general-operad-algebra} and \ref{exam:morphism-of-general-operad-algebra}. We record this in
\begin{cor}\label{cor:general-operad-algebra}
For any operad $T$ and any symmetric monoidal category $\ca V$, the category of $T$-algebras in $\ca V$ and morphisms thereof is isomorphic to $\PsAlgl T(1,\ca V_{\bullet})$.
\end{cor}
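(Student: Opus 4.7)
The plan is to assemble this corollary directly from the machinery developed earlier in the section, rather than recomputing anything from scratch. The statement is essentially a bookkeeping result: it asserts that a chain of previously established bijections fits together into an isomorphism of categories, so the proof amounts to identifying the objects, morphisms, and composition on each side of the correspondence.

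First I would note that both $1 : T \to \CatAsOp$ and $\ca V_{\bullet} : T \to \CatAsOp$ from Example \ref{exam:smoncat-as-pseudo-operad-morphism-general-version} are pseudo morphisms of operads; hence by Theorem \ref{thm:categorical-algebras-of-operads} they correspond to pseudo $T$-algebras in $\Cat/I$ (where $I$ is the set of colours of $T$, and in these cases $1$ is fibred over $I$ by the identity on $I$, while $\ca V_{\bullet}$ is fibred over $I$ by the first projection from $\ca V \times I$). This is what makes the hom $\PsAlgl{T}(1,\ca V_{\bullet})$ well-defined. The objects of this hom are, by definition, lax morphisms of pseudo $T$-algebras; by Theorem \ref{thm:catalg-morphisms-for-operads} these are in bijection with lax-natural transformations $1 \to \ca V_{\bullet}$ in the sense of Definition \ref{defn:lax-nat}. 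Finally, Example \ref{exam:general-operad-algebra} identifies such lax-natural transformations with $T$-algebras in $\ca V$ in the classical sense. Composing these three bijections gives the required identification on objects.

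For morphisms, the argument is parallel: 2-cells in $\PsAlgl{T}(1,\ca V_{\bullet})$ are algebra 2-cells between the corresponding lax morphisms of $T$-algebras, which by Theorem \ref{thm:catalg-2cells-for-operads} are exactly modifications between the corresponding lax-natural transformations, which by Example \ref{exam:morphism-of-general-operad-algebra} are exactly morphisms of $T$-algebras in $\ca V$. It remains to observe that this bijection is compatible with vertical composition. This follows automatically, because in all three settings (algebra 2-cells, modifications, morphisms of $T$-algebras in $\ca V$) composition is defined componentwise over the colour set $I$, and all three of the bijections above act as the identity on these underlying components. Thus the assignment is a functor and, having a two-sided inverse assembled in the same way, it is an isomorphism of categories.

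The only step that could plausibly present difficulty is checking that the identifications of objects and 2-cells really are functorial and compatible, since Theorems \ref{thm:categorical-algebras-of-operads}, \ref{thm:catalg-morphisms-for-operads}, and \ref{thm:catalg-2cells-for-operads} were stated as bijections of structures on a fixed underlying object, rather than as functors. However, once one unpacks these bijections, the underlying data on each side is transported identically — a lax-natural transformation $f$ and its corresponding lax morphism of algebras have the same underlying arrow in $\Cat/I$, and the coherence 2-cell $\overline{f}$ is built from exactly the components $\overline{f}_\alpha$. So composition and identities are preserved on the nose, and no further combinatorial work is needed.
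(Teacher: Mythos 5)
Your proposal is correct and follows essentially the same route as the paper, which states the corollary as a direct consequence of Theorems \ref{thm:categorical-algebras-of-operads}, \ref{thm:catalg-morphisms-for-operads} and \ref{thm:catalg-2cells-for-operads} together with Examples \ref{exam:smoncat-as-pseudo-operad-morphism-general-version}, \ref{exam:general-operad-algebra} and \ref{exam:morphism-of-general-operad-algebra}, with no further argument given. Your additional remarks on functoriality and the identification of the underlying objects of $1$ and $\ca V_{\bullet}$ in $\Cat/I$ are accurate and only make explicit what the paper leaves implicit.
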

\begin{exams}\label{exams:modular-operads-etc}
In \cite{BataninBerger-HtyThyOfAlgOfPolyMnd} many contemporary operad notions, were seen as algebras of $\Sigma$-free operads, by exhibiting the corresponding polynomial over $\Set$ whose algebras are the corresponding contemporary operad notion. For instance there is a $\Sigma$-free operad $T$ corresponding to modular operads, and so for each symmetric monoidal category $\ca V$, $\PsAlgl T(1,\ca V_{\bullet})$ is the category modular operads in $\ca V$.
\end{exams}

\section{Commutative operad morphisms into \underline{Cat}}
\label{sec:comm-algebra}

In this section we exhibit a 2-monad $T/\Sigma$ on $\Cat/I$, whose lax, colax, pseudo and strict algebras are exactly the commutative lax, colax, pseudo and strict morphisms $T \to \CatAsOp$, in the sense of Definition \ref{defn:commutative-morphism-into-CatAsOp}. So it is the strict-$(T/\Sigma)$-algebras that correspond to ordinary morphisms of operads $T \to \CatAsOp$. Since these in turn coincide with the algebras for the operad $T$ in $\Cat$ in the usual sense, $T/\Sigma$ is thus the usual monad that one associates to an operad. The novelty here is that since we obtain $T/\Sigma$ from $T$ via some general 2-categorical process, the comparison of these two 2-monads is thus facilitated. We shall exploit this in Sections \ref{sec:sigma-free} and \ref{sec:QuillenEquiv}.

As anticipated in the introduction, we will obtain $T/\Sigma$ from $T$ by exhibiting a canonical 2-cell $\alpha_T$
\begin{equation}\label{eq:def-T-Mod-Sigma}
\begin{gathered}
\xygraph{!{0;(1.5,0):(0,1)::} {T^{[1]}_{\Sigma}}="p0" [r] {T}="p1" [r] {T/\Sigma}="p2" "p0":@<1.5ex>"p1"|(.45){}="t"^-{d_T}:"p2"^-{q_T} "p0":@<-1.5ex>"p1"|(.45){}="b"_-{c_T} "t":@{}"b"|(.15){}="d"|(.85){}="c" "d":@{=>}"c"^-{\alpha_T}}
\end{gathered}
\end{equation}
in the 2-category $\tn{Mnd}(\Cat/I)$ of 2-monads on $\Cat/I$ below in Definition \ref{defn:T1-Sigma}, and then taking the universal 1-cell $q_T$ such that $q_T\alpha_T$ is an identity. In the language of 2-category theory, $q_T$ is the \emph{coidentifier} of $\alpha_T$ in $\tn{Mnd}(\Cat/I)$. From this abstract description we will see that $T/\Sigma$ has the required algebras. Recall from the proof of Theorem \ref{thm:categorical-algebras-of-operads} that for $H \to I$ in $\Cat/I$, $TH$ has a factorisation system in which the left class are the levelwise maps and the right class are the permutative maps. The universal property of $q_T$ means that it is the universal monad morphism which on components sends these permutative maps to identities. In other words at the syntactic level, the monad morphism $q_T$ is the process of modding out by the symmetric group actions of the operad $T$.

Before providing $T^{[1]}_{\Sigma}$ and $\alpha_T$ for (\ref{eq:def-T-Mod-Sigma}) some preliminary remarks are in order. To begin with, it turns out that for us, the 2-functor $(-)^{[1]} : \Cat \to \Cat$ which takes any category to its category of arrows, preserves enough distributivity pullbacks. A general result in this direction is Lemma \ref{lem:radj-pres-dpbs} below. Recall that a natural transformation between functors $\ca A \to \ca B$ is \emph{cartesian} when its naturality squares in $\ca B$ are pullbacks. Given an arrow $f$ of $\ca A$, we say that the natural transformation is \emph{cartesian with respect to} $f$, when its naturality square associated to $f$ is a pullback square. In particular, a cartesian natural tranformation in the usual sense is one that is cartesian with respect to all the morphisms of $\ca A$.
\begin{lem}\label{lem:radj-pres-dpbs}
A functor $R : \ca E \to \ca F$ between categories with pullbacks which has a pullback preserving left adjoint $L$, preserves any distributivity pullbacks of the form
\[ \xygraph{{P}="p0" [r] {A}="p1" [r] {B}="p2" [d] {C}="p3" [l(2)] {Q}="p4" "p0":"p1"^-{p}:"p2"^-{g}:"p3"^-{f}:@{<-}"p4"^-{r}:@{<-}"p0"^-{q}:@{}"p3"|-{\tn{dpb}}} \]
for which the counit of $L \ladj R$ is cartesian with respect to $f$.
\end{lem}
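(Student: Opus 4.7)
The plan is to verify that $(Rp, Rq, Rr)$ satisfies the universal property of a distributivity pullback around $(Rf, Rg)$ in $\ca F$. Since $R$ is a right adjoint it preserves all pullbacks, so the defining pullback square $(gp, f, r, q)$ of $(p, q, r)$ becomes a pullback square $(Rgp, Rf, Rr, Rq)$ after applying $R$, showing that $(Rp, Rq, Rr)$ is at least a pullback around $(Rf, Rg)$. The real work is the universality.

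So suppose $(p', q', r')$ is any pullback around $(Rf, Rg)$ in $\ca F$, with $p' : P' \to RA$, $q' : P' \to Q'$ and $r' : Q' \to RC$. My strategy is to transpose this data across the adjunction $L \ladj R$ to a pullback around $(f, g)$ in $\ca E$, apply the universal property of $(p, q, r)$ there, and then transpose back. Writing $\varepsilon$ for the counit, the candidates in $\ca E$ are $\hat p' := \varepsilon_A \circ Lp' : LP' \to A$, the map $Lq' : LP' \to LQ'$, and $\hat r' := \varepsilon_C \circ Lr' : LQ' \to C$.

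The key verification is that the square with top $Lq'$, bottom $f$, left $g \hat p'$ and right $\hat r'$ is a pullback. Using naturality of $\varepsilon$ at $g$, the left vertical rewrites as $g \hat p' = \varepsilon_B \circ LRg \circ Lp'$, so this square decomposes as a vertical paste of two squares. The upper square $(LRg \circ Lp', LRf, Lr', Lq')$ with corners $LP', LRB, LRC, LQ'$ is the image under $L$ of the pullback exhibiting $(p', q', r')$ around $(Rf, Rg)$, hence a pullback since $L$ preserves pullbacks. The lower square is exactly the naturality square of $\varepsilon$ at $f$, a pullback by the cartesianness hypothesis. Pasting two pullbacks gives a pullback, so $(\hat p', Lq', \hat r')$ is a pullback around $(f, g)$ in $\ca E$.

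The universal property of the distributivity pullback $(p, q, r)$ then delivers unique $\alpha : LP' \to P$ and $\beta : LQ' \to Q$ with $p \alpha = \hat p'$, $r \beta = \hat r'$ and $q \alpha = \beta \circ Lq'$. Their adjoint transposes $\alpha^\sharp : P' \to RP$ and $\beta^\sharp : Q' \to RQ$ across $L \ladj R$ then satisfy $Rp \cdot \alpha^\sharp = p'$, $Rr \cdot \beta^\sharp = r'$ and $Rq \cdot \alpha^\sharp = \beta^\sharp \cdot q'$; each of these reduces to a short diagram chase using naturality of $\varepsilon$ and a triangle identity of the adjunction. Uniqueness of $(\alpha^\sharp, \beta^\sharp)$ is inherited from uniqueness of $(\alpha, \beta)$ via the adjunction bijection. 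The main bookkeeping subtlety I anticipate is ensuring the outer boundary of the vertical paste in step three really coincides with the square we claim is a pullback; once the rewrite $g \hat p' = \varepsilon_B \circ LRg \circ Lp'$ is in hand, the rest is routine.
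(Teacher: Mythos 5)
Your proof is correct and follows essentially the same route as the paper's: transpose the given pullback around $(Rf,Rg)$ along the counit, observe that the relevant square in $\ca E$ factors as the $L$-image of the defining pullback pasted with the counit naturality square at $f$ (both pullbacks by hypothesis), invoke the universal property of the distributivity pullback, and transfer the induced pair back through the adjunction bijection. No gaps.
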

\begin{proof}
We denote by $\varepsilon : LR \to 1_{\ca E}$ the counit of the adjunction $L \ladj R$. Given a pullback $(p_2,q_2,r_2)$ around $(Rf,Rg)$ as on the left
\[ \xygraph{{\xybox{\xygraph{{RP}="p0" [r] {RA}="p1" [r] {RB}="p2" [d] {RC}="p3" [l(2)] {RQ}="p4" "p0":"p1"^-{Rp}:"p2"^-{Rg}:"p3"^-{Rf}:@{<-}"p4"^-{Rr}:@{<-}"p0"^-{Rq}:@{}"p3"|-{\tn{pb}}
"p0" [ul] {X}="q0" "p4" [dl] {Y}="q1" "q0" (:@/^{.75pc}/"p1"^-{p_2},:@{.>}"p0"_-{\gamma},:"q1"_-{q_2}(:@{.>}"p4"^-{\delta},:@/_{.5pc}/"p3"_-{r_2}))}}}
[r(5)]
{\xybox{\xygraph{{P}="p0" [r] {A}="p1" [r] {B}="p2" [d] {C}="p3" [l(2)] {Q}="p4" "p0":"p1"^-{p}:"p2"^-{g}:"p3"^-{f}:@{<-}"p4"^-{r}:@{<-}"p0"^-{q}:@{}"p3"|-{\tn{dpb}}
"p0" [ul] {LX}="q0" "p4" [dl] {LY}="q1" "q0" (:@/^{.75pc}/"p1"^-{\varepsilon_AL(p_2)},:@{.>}"p0"_-{\alpha},:"q1"_-{Lq_2}(:@{.>}"p4"^-{\beta},:@/_{.5pc}/"p3"_-{\varepsilon_CL(r_2)}))}}}} \]
we must exhibit $(\gamma,\delta)$ as shown in $\ca F$ unique making the diagram commute. To this end one has the solid parts of the diagram on the right in $\ca E$ in the previous display, and the square with vertices $(LX,B,C,LY)$ can be decomposed as
\[ \xygraph{!{0;(1.5,0):(0,.6667)::} {LX}="p0" [r] {LRA}="p1" [r] {LRB}="p2" [r] {B}="p3" [d] {C}="p4" [l] {LRC}="p5" [l(2)] {LY}="p6" "p0":"p1"^-{Lp_2}:"p2"^-{LRg}:"p3"^-{\varepsilon_B}:"p4"^-{f}:@{<-}"p5"^-{\varepsilon_C}:@{<-}"p6"^-{Lr_2}:@{<-}"p0"^-{Lq_2} "p2":"p5"_-{LRf}} \]
and so is a pullback by the hypotheses on $L$ and $f$. Thus one has $(\alpha,\beta)$ unique as shown making the diagram on the right commute, but to give such $(\alpha,\beta)$ is, by the adjointness $L \ladj R$, to give the required $(\gamma,\delta)$.
\end{proof}
%
%
\begin{rem}\label{rem:radj-pres-2dpbs}
To obtain the 2-categorical analogue of Lemma \ref{lem:radj-pres-dpbs}, in which $L \ladj R$ is a 2-adjunction, one uses the 2-dimensional universal property of distributivity pullbacks in this setting and the 2-dimensional aspects of the adjointness $L \ladj R$, to adapt the above proof.
\end{rem}
\begin{lem}\label{lem:arrowcat-dpbs}
If the pullback on the left
\[ \xygraph{{\xybox{\xygraph{{P}="p0" [r] {A}="p1" [r] {B}="p2" [d] {C}="p3" [l(2)] {Q}="p4" "p0":"p1"^-{p}:"p2"^-{g}:"p3"^-{f}:@{<-}"p4"^-{r}:@{<-}"p0"^-{q}:@{}"p3"|-{\tn{dpb}}}}}
[r(4)]
{\xybox{\xygraph{{P^{[1]}}="p0" [r] {A^{[1]}}="p1" [r] {B^{[1]}}="p2" [d] {C^{[1]}}="p3" [l(2)] {Q^{[1]}}="p4" "p0":"p1"^-{p^{[1]}}:"p2"^-{g^{[1]}}:"p3"^-{f^{[1]}}:@{<-}"p4"^-{r^{[1]}}:@{<-}"p0"^-{q^{[1]}}:@{}"p3"|-{\tn{pb}}}}}} \]
in $\Cat$ is a distributivity pullback around $(f,g)$ and $f$ is a discrete fibration and a discrete opfibration, then the pullback on the right in the previous display is a  distributivity pullback around $(f^{[1]},g^{[1]})$.
\end{lem}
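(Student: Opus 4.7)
The plan is to invoke the 2-dimensional version of Lemma \ref{lem:radj-pres-dpbs} alluded to in Remark \ref{rem:radj-pres-2dpbs}, applied to the 2-adjunction $(-)\times[1] \dashv (-)^{[1]}$ on $\Cat$ in which the right adjoint is the $[1]$-cotensor. Two hypotheses must then be verified: (i) the left adjoint $-\times[1]$ preserves pullbacks, and (ii) the counit $\varepsilon_A:A^{[1]}\times[1] \to A$, which is evaluation, is cartesian with respect to $f$. Property (i) is immediate by direct verification at the level of objects and morphisms, since the pairing condition forces the $[1]$-components on the two sides of the square to agree.

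For property (ii), consider the naturality square
\[ \xygraph{!{0;(1.5,0):(0,.6667)::} {A^{[1]}\times[1]}="p0" [r(1.3)] {A}="p1" [d] {C}="p2" [l(1.3)] {C^{[1]}\times[1]}="p3" "p0":"p1"^-{\varepsilon_A}:"p2"^-{f}:@{<-}"p3"^-{\varepsilon_C}:@{<-}"p0"^-{f^{[1]}\times 1}} \]
On objects, given $a\in A$ and $(c,i)\in C^{[1]}\times[1]$ with $fa=\varepsilon_C(c,i)$: when $i=0$ the discrete opfibration structure on $f$ produces a unique opCartesian lift $u$ of $c$ starting at $a$, and when $i=1$ the discrete fibration structure on $f$ produces a unique Cartesian lift ending at $a$. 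This is precisely where both hypotheses on $f$ are essential. For morphisms, a general arrow of $C^{[1]}\times[1]$ has the form $(\alpha,\iota)$ where $\alpha$ is a commutative square in $C$ and $\iota$ is an arrow of $[1]$; under $\varepsilon_C$ it is sent to a side of $\alpha$ when $\iota$ is an identity, and to the diagonal when $\iota=0\to 1$. In each case, given an arrow $\delta$ in $A$ lifting $\varepsilon_C(\alpha,\iota)$ together with the lifts of source and target objects constructed above, one uses the unique factorisation property of (op)Cartesian morphisms to factor $\delta$ through the relevant $u$ and $v$, thereby producing the required unique arrow in $A^{[1]}\times[1]$.

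The main obstacle is the diagonal case $\iota=0\to 1$: here one must simultaneously factorise $\delta$ as $\alpha'_1 u$ through the opCartesian lift $u$ at the source, and as $v\alpha'_0$ through the Cartesian lift $v$ at the target, and check that $(\alpha'_0,\alpha'_1)$ defines a commuting square in $A^{[1]}$. Both factorisations exist and are unique by the (op)Cartesian properties, and commutativity of the resulting square is automatic since both composites compute $\delta$. The 2-dimensional universal property is preserved by essentially the same argument, since $[1]$ is a 1-category, the adjunction $(-)\times[1]\dashv(-)^{[1]}$ is a 2-adjunction, and discreteness of the fibres of $f$ means that 2-cells in $\Cat$ lift no less strictly than 1-cells. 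Once (i) and (ii) are established, the lemma follows immediately from the 2-dimensional enhancement of Lemma \ref{lem:radj-pres-dpbs}.
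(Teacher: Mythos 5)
Your proposal is correct and follows essentially the same route as the paper: reduce via Lemma \ref{lem:radj-pres-dpbs} (the paper uses the 1-dimensional version, which suffices since distributivity pullbacks are defined by a 1-dimensional universal property) to showing the evaluation naturality square is a pullback, then check objects using the discrete (op)fibration conditions and check arrows in the three cases $\iota = 1_0$, $1_1$, $0\to 1$, the diagonal case amounting to unique lifting of factorisations, which both discrete fibrations and discrete opfibrations possess.
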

\begin{proof}
The functor $(-) \times [1] : \Cat \to \Cat$ preserves pullbacks since it can be written as the composite
\[ \Cat \xrightarrow{\Delta_{[1]}} \Cat/[1] \xrightarrow{\Sigma_{[1]}} \Cat. \]
The component of the counit of $(-) \times [1] \ladj (-)^{[1]}$ at $X \in \Cat$ is given by the evaluation functor $\tn{ev}_{[1],X} : X^{[1]} \times [1] \to X$. Thus
by Lemma \ref{lem:radj-pres-dpbs}, it suffices to show that if $f : B \to C$ is a discrete fibration and a discrete opfibration, then the naturality square
\begin{equation}\label{eq:ev-nat-sq}
\begin{gathered}
\xygraph{!{0;(2,0):(0,.5)::} {B^{[1]} \times [1]}="p0" [r] {B}="p1" [d] {C}="p2" [l] {C^{[1]} \times [1]}="p3" "p0":"p1"^-{\tn{ev}_{[1],B}}:"p2"^-{f}:@{<-}"p3"^-{\tn{ev}_{[1],C}}:@{<-}"p0"^-{f^{[1]} \times 1_{[1]}}}
\end{gathered}
\end{equation}
is a pullback. To say this is a pullback on objects is to say that for $\alpha : c_0 \to c_1$ in $C$, $i \in \{0,1\}$ and $b \in B$ such that $fb = c_i$, then there exists a unique $\beta : b_0 \to b_1$ of $B$ such that $f\beta = \alpha$ and $b_i = b$. When $i = 0$, this is exactly the condition that $f$ be a discrete opfibration, and when $i = 1$ this corresponds to $f$ being a discrete fibration. In other words $f$ is a discrete fibration and a discrete opfibration iff (\ref{eq:ev-nat-sq}) is a pullback on objects.

An arrow $S$ of $C^{[1]}$ is a commutative square in the category $C$, and the category $[1]$ is just the ordinal $\{0<1\}$, and so it has three arrows $1_0$, $1_1$ and the unique arrow $0 \to 1$. Writing
\[ \xygraph{{c_0}="p0" [r] {c_1}="p1" [d] {c_3}="p2" [l] {c_2}="p3" "p0":"p1"^-{\alpha_0}:"p2"^-{\alpha_3}:@{<-}"p3"^-{\alpha_1}:@{<-}"p0"^-{\alpha_2}} \]
for the arrow $S$, regarding it as an arrow $\alpha_0 \to \alpha_1$ in $C^{[1]}$,
the evaluation functor $\tn{ev}_{[1],C}$ acts on arrows by sending $(S,1_0)$, $(S,1_1)$ and $(S,0 \to 1)$ to $\alpha_2$, $\alpha_3$ and the diagonal $c_0 \to c_3$ respectively. To say that (\ref{eq:ev-nat-sq}) is a pullback on arrows is to say that given a square $S$ in $C$, an arrow $\iota$ in $[1]$ and an arrow $\beta : a \to b$ in $B$ such that $f\beta = \tn{ev}_{[1],C}(S,\iota)$, then there exists a unique square $R$ in $B$ such that $fR = S$ and $\tn{ev}_{[1],B}(B,\iota) = \beta$. In more elementary terms this says, in the cases where $\iota$ is $1_0$, $0 \to 1$ and $1_1$, that $f$ enjoys the respective unique lifting properties depicted in
\[ \xygraph{!{0;(3,0):(0,.75)::} 
{\xybox{\xygraph{{c'_0}="p0" [r] {a}="p1" [d] {b}="p2" [l] {c'_2}="p3" "p0":@{.>}"p1"^-{}:"p2"^-{\beta}:@{<.}"p3"^-{}:@{<.}"p0"^-{}}}}="p0"
[d]
{\xybox{\xygraph{{c_0}="p0" [r] {fa}="p1" [d] {fb}="p2" [l] {c_2}="p3" "p0":"p1"^-{}:"p2"^-{f\beta}:@{<-}"p3"^-{}:@{<-}"p0"^-{}}}}="p1"
[ul]
{\xybox{\xygraph{{a}="p0" [r] {c'_1}="p1" [d] {b}="p2" [l] {c'_2}="p3" "p0":@{.>}"p1"^-{}:@{.>}"p2"^-{}:@{<.}"p3"^-{}:@{<.}"p0"^-{}:"p2"^-{\beta}}}}="p2"
[d]
{\xybox{\xygraph{{fa}="p0" [r] {c_1}="p1" [d] {fb}="p2" [l] {c_2}="p3" "p0":"p1"^-{}:"p2"^-{}:@{<-}"p3"^-{}:@{<-}"p0"^-{}:"p2"^-{f\beta}}}}="p3"
[ul]
{\xybox{\xygraph{{a}="p0" [r] {c'_1}="p1" [d] {c'_3}="p2" [l] {b}="p3" "p0":@{.>}"p1"^-{}:@{.>}"p2"^-{}:@{<.}"p3"^-{}:@{<-}"p0"^-{\beta}}}}="p4"
[d]
{\xybox{\xygraph{{fa}="p0" [r] {c_1}="p1" [d] {c_3}="p2" [l] {fb}="p3" "p0":"p1"^-{}:"p2"^-{}:@{<-}"p3"^-{}:@{<-}"p0"^-{f\beta}}}}="p5"} \]
in which dotted arrows in $B$ are the unique liftings of the corresponding arrows down in $C$. For instance, in the case $\iota = 1_0$ depicted on the left, this says that given $\beta$ in $B$ and a square in $C$ whose left vertical edge is $f\beta$, one can lift that square uniquely to a square in $B$ whose left vertical edge is $\beta$. Clearly one has this unique lifting property when $f$ is a discrete opfibration, and similarly one has the unique lifting property depicted on the right ($\iota = 1_1$) when $f$ is a discrete fibration. As for the unique lifting property depicted in the middle ($\iota = 0 \to 1$), one has this whenever $f$ is a discrete Conduch\'{e} fibration, that is, when $f$ satisfies the unique lifting of factorisations. As is well-known and easy to check directly, both discrete fibrations and discrete opfibrations possess this property.
\end{proof}
For $X \in \Cat$ the arrow category $X^{[1]}$ is a basic 2-categorical limit construction, namely it is the \emph{cotensor} of $X$ with the category $[1]$. As such the data of the corresponding limit cone is
\[ \xygraph{!{0;(1.5,0):(0,1)::} {X^{[1]}}="p0" [r] {X}="p1"
"p0":@<1.5ex>"p1"|(.45){}="t"^-{d_X} "p0":@<-1.5ex>"p1"|(.45){}="b"_-{c_X} "t":@{}"b"|(.15){}="d"|(.85){}="c" "d":@{=>}"c"^-{\alpha_X}} \]
in which $d_X$ and $c_X$ are the functors which on objects take domains and codomains respectively, and the component of $\alpha_X$ at $\beta \in X^{[1]}$ is $\beta$ viewed as an arrow of $X$. By 2-dimensional universality $d_X$ and $c_X$ are the components of 2-natural transformations $d$ and $c : (-)^{[1]} \to 1_{\Cat}$, and the $\alpha_X$ are the components of a modification $\alpha : d \to c$. Moreover $d$ is cartesian with respect to a functor $f : B \to C$ iff $f$ is a discrete opfibration, and $c$ is cartesian with respect to $f$ iff $f$ is a discrete fibration.

Let $T$ be a collection with object set $I$. Recall that the middle map $p_T : E_T \to B_T$ of its corresponding polynomial is a discrete fibration and a discrete opfibration. Since $(-)^{[1]}$ as a right adjoint preserves discrete fibrations and discrete opfibrations, $p_T^{[1]}$ is such and hence exponentiable. Thus applying $(-)^{[1]}$ componentwise to $T$'s corresponding polynomial and identifying $I = I^{[1]}$, gives another polynomial as on the left
\begin{equation}\label{eq:T1-Sigma-poly}
\begin{gathered}
\xygraph{{\xybox{\xygraph{!{0;(1.25,0):(0,1)::} {I}="p0" [r] {E_T^{[1]}}="p1" [r] {B_T^{[1]}}="p2" [r] {I}="p3" "p0":@{<-}"p1"^-{s_T^{[1]}}:"p2"^-{p_T^{[1]}}:"p3"^-{t_T^{[1]}}}}}
[r(5.5)u(.5)]
{\xybox{\xygraph{!{0;(1.5,0):(0,.6)::} {I}="p0" [ur] {E^{[1]}_T}="p1" [r] {B^{[1]}_T}="p2" [dr] {I}="p3" [dl] {B_T}="p4" [l] {E_T}="p5" "p0":@{<-}"p1"^-{s^{[1]}_T}:"p2"^-{p^{[1]}_T}:"p3"^-{t^{[1]}_T}:@{<-}"p4"^-{t_T}:@{<-}"p5"^-{p_T}:"p0"^-{s_T}
"p1":@/_{1.25pc}/"p5"_(.425){d_{E_T}}|(.425){}="dal" "p2":@/_{1.25pc}/"p4"_(.425){d_{B_T}}|(.425){}="dar"
"p1":@/^{1.25pc}/"p5"^(.575){c_{E_T}}|(.575){}="cal" "p2":@/^{1.25pc}/"p4"^(.575){c_{E_T}}|(.575){}="car"
"dal":@{}"cal"|(.35){}="dl"|(.65){}="cl" "dl":@{=>}"cl"^-{\alpha_{E_T}}
"dar":@{}"car"|(.35){}="dr"|(.65){}="cr" "dr":@{=>}"cr"^-{\alpha_{B_T}}}}}}
\end{gathered}
\end{equation}
and since $d$ and $c$ are cartesian with respect to $p_T$, one in fact has a 3-cell $(\alpha_{B_T},\alpha_{E_T})$ in $\Polyc{\Cat}$ as on the right.
\begin{defn}\label{defn:dfib-dopfib-polyendos}
The full sub-2-category of $\Polyc{\Cat}(I,I)$ consisting of those polynomials whose middle map is a discrete fibration and a discrete opfibration is denoted as $\mathfrak{D}_I$.
\end{defn}
Since discrete fibrations and discrete opfibrations are pullback stable and closed under composition, the monoidal structure $\Polyc{\Cat}(I,I)$ has by virtue of the composition of polynomials, restricts to $\mathfrak{D}_I$. Applying $(-)^{[1]}$ componentwise to such polynomials as above is the effect on objects of an endo-2-functor
\[ \ca I_I : \mathfrak{D}_I \longrightarrow \mathfrak{D}_I. \]
The components of $d$ and $c$ give 2-natural transformations $\ca D_I$ and $\ca C_I : \ca I_I \to 1_{\mathfrak{D}_I}$, and the components of $\alpha$ give a modification $\ca A_I : \ca D_I \to \ca C_I$. The component of $\ca A_I$ with respect to the polynomial corresponding to a collection $T$ is $(\alpha_{B_T},\alpha_{E_T})$ depicted in (\ref{eq:T1-Sigma-poly}).

As is well-known, for a symmetric monoidal closed category $\ca V$, the basic notions of monoidal category, lax and strong monoidal functor and monoidal natural transformation admit evident $\ca V$-enriched analogues. In particular the endohoms of $\Polyc{\Cat}$, or indeed those of any 2-bicategory, and the 2-categories $\mathfrak{D}_I$ defined above are monoidal 2-categories in the straightforward $\Cat$-enriched sense. Given monoidal 2-categories $\ca X$ and $\ca Y$, lax monoidal 2-functors $F$ and $G : \ca X \to \ca Y$, and monoidal 2-natural transformations $\phi$ and $\psi : F \to G$, one has an evident notion of \emph{monoidal modification} $\zeta : \phi \to \psi$, which is a modification such that
\[ \xygraph{{\xybox{\xygraph{{I}="p0" [r(1.5)u] {FI}="p1" [d(2)] {GI}="p2" "p0":"p1"^-{\overline{F}_0}:@/_{1pc}/"p2"_(.4){\phi_I}|(.4){}="ml":@{<-}"p0"^-{\overline{G}_0} "p1":@/^{1pc}/"p2"^(.6){\psi_I}|(.6){}="mr"
"ml":@{}"mr"|(.35){}="d"|(.65){}="c" "d":@{=>}"c"^-{\zeta_{I}}}}}
[r(4.5)]
{\xybox{\xygraph{!{0;(3,0):(0,.6667)::} {FX \tensor FY}="p0" [r] {F(X \tensor Y)}="p1" [d] {G(X \tensor Y)}="p2" [l] {GX \tensor GY}="p3" "p0":"p1"^-{\overline{F}_{2,X,Y}}:@/_{1.75pc}/"p2"_(.4){\phi_{X \tensor Y}}|(.4){}="mlr":@{<-}"p3"^-{\overline{G}_{2,X,Y}}:@{<-}@/^{1.75pc}/"p0"^(.6){\phi_X \tensor \phi_Y}|(.6){}="mll" "p1":@/^{1.75pc}/"p2"^(.6){\psi_{X \tensor Y}}|(.6){}="mrr" "p0":@/^{1.75pc}/"p3"^(.6){\psi_X \tensor \psi_Y}|(.6){}="mrl"
"mll":@{}"mrl"|(.35){}="dl"|(.65){}="cl" "dl":@{=>}"cl"^-{\zeta_X \tensor \zeta_Y}
"mlr":@{}"mrr"|(.35){}="dr"|(.65){}="cr" "dr":@{=>}"cr"^-{\zeta_{X \tensor Y}}}}}} \]
commute, where $\overline{F}_0$, $\overline{G}_0$, $\overline{G}_{2,X,Y}$ and $\overline{G}_{2,X,Y}$ are the lax monoidal coherences of $F$ and $G$. 

With these 3-cells monoidal 2-categories form a (strict) 3-category $\tnb{Mon-}{\mathbf 2}{\tnb{-CAT}}$ equipped with a forgetful 3-functor into $\TwoCAT$. As in the unenriched setting monoidal 2-functors $1 \to \ca X$ may be identified with (strict) monoids in $\ca X$. We denote by $\tn{Mon}(\ca X)$ the 2-category $\tnb{Mon-}{\mathbf 2}{\tnb{-CAT}}(1,\ca X)$, whose objects are monoids and monoid morphisms, and whose 2-cells we refer to as \emph{monoid 2-cells}. Also of interest for us are weaker notions of monoid morphism, so we denote by $\tn{Mon}(\ca X)_{\tn{l}}$, $\tn{Mon}(\ca X)_{\tn{c}}$ and $\tn{Mon}(\ca X)_{\tn{ps}}$ the 2-categories whose objects are monoids in $\ca X$, and morphisms are lax, colax and pseudo morphisms of monoids whose coherence data is of the form
\[ \xygraph{{\xybox{\xygraph{{I}="p0" [r] {X}="p1" [r] {X^{\tensor 2}}="p2" [d] {Y^{\tensor 2}}="p3" [l] {Y}="p4"
"p0":"p1"^-{u_X}:@{<-}"p2"^-{m_X}:"p3"^-{f^{\tensor 2}}:"p4"^-{m_Y}:@/^{1pc}/@{<-}"p0"^-{u_Y} "p1":"p4"^-{f}
"p0" [d(.4)r(.4)] :@{=>}[r(.3)] "p1" [d(.4)r(.4)] :@{<=}[r(.3)]}}}
[r(3.25)]
{\xybox{\xygraph{{I}="p0" [r] {X}="p1" [r] {X^{\tensor 2}}="p2" [d] {Y^{\tensor 2}}="p3" [l] {Y}="p4"
"p0":"p1"^-{u_X}:@{<-}"p2"^-{m_X}:"p3"^-{f^{\tensor 2}}:"p4"^-{m_Y}:@/^{1pc}/@{<-}"p0"^-{u_Y} "p1":"p4"^-{f}
"p0" [d(.4)r(.4)] :@{<=}[r(.3)] "p1" [d(.4)r(.4)] :@{=>}[r(.3)]}}}
[r(3.25)]
{\xybox{\xygraph{{I}="p0" [r] {X}="p1" [r] {X^{\tensor 2}}="p2" [d] {Y^{\tensor 2}}="p3" [l] {Y}="p4"
"p0":"p1"^-{u_X}:@{<-}"p2"^-{m_X}:"p3"^-{f^{\tensor 2}}:"p4"^-{m_Y}:@/^{1pc}/@{<-}"p0"^-{u_Y} "p1":"p4"^-{f}
"p0" [d(.4)r(.55)] *{\iso} "p1" [d(.4)r(.55)] *{\iso}}}}} \]
respectively satisfying the usual axioms, and monoid 2-cells between these. In the case where $\ca X = \Cat$ with cartesian tensor product, one refinds the usual notions of lax, colax and strong monoidal functor between strict monoidal categories, and monoidal natural transformations between these.

In particular, for any 2-category $\ca K$ one has the monoidal 2-category $\tn{End}(\ca K)$ of endo-2-functors of $\ca K$. For $\ca X = \tn{End}(\ca K)$ the 2-categories $\tn{Mon}(\ca X)$, $\tn{Mon}(\ca X)_{\tn{l}}$, $\tn{Mon}(\ca X)_{\tn{c}}$ and $\tn{Mon}(\ca X)_{\tn{ps}}$ just defined are denoted $\tn{Mnd}(\ca K)$, $\tn{Mnd}(\ca K)_{\tn{l}}$, $\tn{Mnd}(\ca K)_{\tn{c}}$ and $\tn{Mnd}(\ca K)_{\tn{ps}}$ respectively. The 2-cells in any of these 2-categories will be called \emph{2-monad 2-cells}.

For any monoid $M$ in a monoidal 2-category $\ca X$ and monoidal modification $\zeta$ as in
\[ \xygraph{*=(0,2){\xybox{\xygraph{!{0;(2,0):(0,1)::} {1}="p0" [r] {\ca X}="p1" [r] {\ca Y}="p2" "p0":"p1"^-{M}
"p1":@/^{1.5pc}/"p2"^-{F} "p1":@/_{1.5pc}/"p2"_-{G}
"p1" [r(.45)u(.175)] :@/_{.85pc}/[d(.35)]_-{\phi}
"p1" [r(.55)u(.175)] :@/^{.85pc}/[d(.35)]^-{\psi}
"p1" [r(.4)] :[r(.2)]^-{\zeta}}}}} \]
it follows immediately from the 3-category structure of $\tnb{Mon-}{\mathbf 2}{\tnb{-CAT}}$ that $FM$ and $GM$ are monoids in $\ca Y$, the components $\phi_M$ and $\psi_M$ are monoid morphisms, and $\zeta_M$ is a monoid 2-cell.
\begin{prop}\label{prop:mon-mod-poly}
In the context of Definition \ref{defn:dfib-dopfib-polyendos}, $\ca I_I : \mathfrak{D}_I \to \mathfrak{D}_I$ is a strong monoidal 2-functor, $\ca D_I$ and $\ca C_I : \ca I_I \to 1_{\mathfrak{D}_I}$ are monoidal 2-natural tranformations, and $\ca A_I : \ca D_I \to \ca C_I$ is a monoidal modification.
\end{prop}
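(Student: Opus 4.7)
\medskip

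\noindent\emph{Proof proposal.} The plan is to prove the three parts in order, with the strong monoidality of $\ca I_I$ being the crux from which the other two statements will follow by straightforward applications of $2$-dimensional universal properties. The two essential inputs are: $(-)^{[1]}:\Cat \to \Cat$ is a right $2$-adjoint (so it preserves all $2$-pullbacks, and its universal property admits a $2$-dimensional enhancement via Remark \ref{rem:radj-pres-2dpbs}); and Lemma \ref{lem:arrowcat-dpbs}, which guarantees that $(-)^{[1]}$ preserves exactly those distributivity pullbacks $(\tn{dpb})$ whose exponentiating functor is simultaneously a discrete fibration and a discrete opfibration.

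\medskip

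\noindent For the strong monoidality of $\ca I_I$, recall that the horizontal composite $P_2 \comp P_1$ of polynomials in $\Polyc{\Cat}$ is built, as in the diagram recalled in Section \ref{sec:poly}, from one ordinary pullback and one distributivity pullback whose exponentiating functor is the middle map $p_1$ of $P_1$. When $P_1,P_2 \in \mathfrak D_I$, each $p_i$ is simultaneously a discrete fibration and a discrete opfibration, and both of these classes are pullback stable and closed under composition, so $P_2 \comp P_1$ again lies in $\mathfrak D_I$. Applying $(-)^{[1]}$ componentwise and using the two preservation properties above, the diagram constructing $P_2 \comp P_1$ is sent to the diagram constructing $\ca I_I(P_2) \comp \ca I_I(P_1)$, giving canonical comparison isomorphisms $\overline{\ca I_I}_{2,P_1,P_2}:\ca I_I(P_2) \comp \ca I_I(P_1) \iso \ca I_I(P_2 \comp P_1)$. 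The unit coherence is immediate since the identity polynomial $(1_I,1_I,1_I)$ is sent to itself ($I$ being discrete). Associativity and unit axioms then reduce to the pasting-uniqueness aspect of the $2$-dimensional universal property of distributivity pullbacks.

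\medskip

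\noindent For $\ca D_I$ and $\ca C_I$, the component at a polynomial $P = (s,p,t) \in \mathfrak D_I$ is the $2$-cell in $\Polyc{\Cat}(I,I)$ given by applying $d$ (respectively $c$) componentwise. The monoidal $2$-natural transformation axioms require that the diagrams
\[
\xygraph{!{0;(2,0):(0,.5)::} {\ca I_I(P_2)\comp\ca I_I(P_1)}="p0" [r] {\ca I_I(P_2\comp P_1)}="p1" [d] {P_2\comp P_1}="p2" [l] {P_2\comp P_1}="p3" "p0":"p1"^-{\overline{\ca I_I}_2}:"p2"^-{\ca D_I}:@{<-}"p3"^-{=}:@{<-}"p0"^-{\ca D_I\comp\ca D_I}}
\]
commute, and the crucial input is that $d$ (respectively $c$) is a cartesian $2$-natural transformation with respect to discrete opfibrations (respectively discrete fibrations); this is exactly what allows the component pasting to match up under the composition pullback/dpb diagram. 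Verifying this is routine once one unpacks the composite polynomial diagram and applies the relevant cartesianness at each middle map. The unit axioms $\ca D_I(1_I)=1_I$ and $\ca C_I(1_I)=1_I$ hold on the nose.

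\medskip

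\noindent For the monoidal modification property of $\ca A_I : \ca D_I \to \ca C_I$, the axioms reduce, after unpacking in the $\Polyc{\Cat}$ setting, to two pasting equalities in $\Cat$: one at the identity, which holds trivially; and one expressing compatibility with binary composition, which follows directly from the $2$-dimensional universal property of the distributivity pullbacks used to form $P_2\comp P_1$, together with the fact that both $d$ and $c$ are cartesian with respect to the middle maps of polynomials in $\mathfrak D_I$. The main obstacle I anticipate is bookkeeping: explicitly assembling the iterated pullback/distributivity-pullback diagram for binary composition of polynomials, then tracking each of the two naturality squares of $d$, $c$, and $\alpha$ through that diagram, and invoking $2$-dimensional universal properties at exactly the right points to obtain the required coherence identities. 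The underlying phenomenon is clear, but there is no obvious way to avoid a careful diagrammatic verification.
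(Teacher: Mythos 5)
Your proposal matches the paper's proof: both obtain the strong monoidal coherence isomorphisms from the fact that $(-)^{[1]}$, as a right 2-adjoint, preserves the pullbacks and (by Lemma \ref{lem:arrowcat-dpbs}) the distributivity pullbacks used to form composites in $\mathfrak{D}_I$, and both deduce all the coherence axioms from the uniqueness inherent in those universal properties together with the naturality of $d$, $c$ and $\alpha$. The only difference is one of emphasis: the paper uses the cartesianness of $d$ and $c$ with respect to the middle maps just before the proposition, to ensure that the components of $\ca D_I$, $\ca C_I$ and $\ca A_I$ are legitimate cells of $\Polyc{\Cat}$ in the first place, rather than inside the coherence verification itself.
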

\begin{proof}
Since $I^{[1]} = I$ one has $\ca I_I(1_I) = 1_I$. Given $P$ and $Q \in \mathfrak{D}_I$, since $(-)^{[1]}$ preserves pullbacks and distributivity pullbacks by Lemma \ref{lem:arrowcat-dpbs}, one has a coherence isomorphism $\ca I_I(P) \comp \ca I_I(Q) \iso \ca I_I(P \comp Q)$ induced by the universal properties of the pullbacks and distributivity pullbacks involved in the formation of $\ca I_I(P) \comp \ca I_I(Q)$. The monoidal 2-functor coherence axioms follow because of the uniqueness inherent in how these coherence isomorphisms were induced. This uniqueness, together with the naturality of $d_X : X^{[1]} \to X$, $c_X : X^{[1]} \to X$ and $\alpha_X : d_X \to c_X$ in $X \in \Cat$, enables one to establish that $\ca D_I$ and $\ca C_I$ are monoidal 2-natural tranformations and $\ca A_I$ is a monoidal modification.
\end{proof}
For an operad $T$ with object set $I$ this last result ensures that $(\alpha_{B_T},\alpha_{E_T})$ depicted in (\ref{eq:T1-Sigma-poly})
underlies a monoid 2-cell in $\mathfrak{D}_I$. Since the effect on homs of $\PFun{\Cat}$ gives a strong monoidal 2-functor $\mathfrak{D}_I \to \tn{End}(\Cat/I)$, $\PFun{\Cat}$ carries $(\alpha_{B_T},\alpha_{E_T})$ to a 2-cell in $\tn{Mnd}(\Cat/I)$.
\begin{defn}\label{defn:T1-Sigma}
The 2-monad 2-cell just described is denoted
\[ \xygraph{!{0;(1.5,0):(0,1)::} {T^{[1]}_{\Sigma}}="p0" [r] {T.}="p1" "p0":@<1.5ex>"p1"|(.45){}="t"^-{d_T} "p0":@<-1.5ex>"p1"|(.45){}="b"_-{c_T} "t":@{}"b"|(.15){}="d"|(.85){}="c" "d":@{=>}"c"^-{\alpha_T}} \]
\end{defn}
We now turn to giving an explicit description of the components of $\alpha_T$. Recall from Lemma \ref{lem:endofunctor-from-collection} that for $X \in \Cat/I$, an arrow of $TX$ is of the form $(\rho,(\gamma_j)_j) : (\alpha\rho,(x_j)_j) \to (\alpha,(y_j)_j)$ where $\alpha : (i_j)_{1{\leq}j{\leq}n} \to i$ is from $T$, $\rho \in \Sigma_n$ and $\gamma_j : x_j \to y_{\rho j}$ is from $X_{\rho j}$. Recall also from the proof of Theorem \ref{thm:categorical-algebras-of-operads} that $(\rho,(\gamma_j)_j)$ is \emph{levelwise} when $\rho$ is an identity, and \emph{permutative} when the $\gamma_j$ are all identities. We denote by $T^{[1]}_{\Sigma} X$ the subcategory of $(TX)^{[1]}$ consisting of the permutative maps.

Given a subcategory $\ca S$ of a category $\ca C$ which contains all the objects, or equivalently a class of maps of $\ca C$ containing all identities and closed under composition, we denote by $\ca C^{[1]}_{\ca S}$ the full subcategory of the arrow category of $\ca C$ consisting of the $f \in \ca S$. One has functors $d_{\ca C,\ca S}$ and $c_{\ca C,\ca S} : \ca C^{[1]}_{\ca S} \to \ca C$ which on objects take domains and codomains of morphisms of $\ca S$ respectively, and a natural transformation $\alpha_{\ca C,\ca S} : d_{\ca C,\ca S} \to c_{\ca C,\ca S}$ whose component at $f \in \ca C^{[1]}_{\ca S}$ is $f$ viewed as an arrow of $\ca C$. 
\begin{lem}\label{lem:T1-Sigma-explicit}
If $T$ is an operad with object set $I$ and $X \in \Cat/I$, then $T^{[1]}_{\Sigma}X = (TX)^{[1]}_{T_{\Sigma}X}$, $d_{T,X} = d_{TX,T_{\Sigma}X}$, $c_{T,X} = c_{TX,T_{\Sigma}X}$ and $\alpha_{T,X} = \alpha_{TX,T_{\Sigma}X}$.
\end{lem}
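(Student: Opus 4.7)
The plan is to unpack both sides by applying Example \ref{exam:polyfunctor-Cat-middle-dopfib} to the polynomial $(s_T^{[1]}, p_T^{[1]}, t_T^{[1]})$, using the fact that $p_T$ is a discrete fibration and discrete opfibration (by its explicit description in Construction \ref{const:collection-PolyEndo}) and hence so is $p_T^{[1]}$ since $(-)^{[1]}$ is a right adjoint. With this in place, I can compute $T^{[1]}_{\Sigma}X$ fibrewise and match it against the explicit description of permutative arrows given in the proof of Theorem \ref{thm:categorical-algebras-of-operads}.

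First, I would describe objects. An object of $B_T^{[1]}$ is an arrow of $B_T$, which has the canonical form $\rho : \alpha\rho \to \alpha$ for $\alpha : (i_j)_{1{\leq}j{\leq}n} \to i$ in $T$ and $\rho \in \Sigma_n$. By the analysis used in Lemma \ref{lem:arrowcat-dpbs}, the fibre $(p_T^{[1]})^{-1}\{\rho\}$ is the discrete $n$-element category whose objects are the arrows $\rho : (\alpha\rho, k) \to (\alpha, \rho k)$ of $E_T$ for $1 \leq k \leq n$. Under the projections to $I$ via $s_T^{[1]}$ (which agree at domain and codomain since $I$ is discrete), each such object lies over $i_{\rho k}$, so a functor $h : (p_T^{[1]})^{-1}\{\rho\} \to X$ over $I$ amounts to a family $(y_j)_{1 \leq j \leq n}$ with $y_j \in X_{i_j}$, via $y_{\rho k} = h_k$. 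This is exactly the data of the permutative arrow $(\rho,(1_{y_{\rho k}})_k) : (\alpha\rho,(y_{\rho k})_k) \to (\alpha,(y_k)_k)$ in $TX$, giving a bijection between objects of $T^{[1]}_{\Sigma}X$ and objects of $(TX)^{[1]}_{T_{\Sigma}X}$.

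Next I would unpack morphisms. An arrow $(\beta,\gamma) : (b,h) \to (b',h')$ in $T^{[1]}_{\Sigma}X$ from Example \ref{exam:polyfunctor-Cat-middle-dopfib} consists of $\beta : b \to b'$ in $B_T^{[1]}$ together with a natural transformation $\gamma : h \to h' \circ \tilde{p}_T^{[1]}(\beta)$. An arrow in $B_T^{[1]}$ is a commutative square in $B_T$, which by the permutation description of $B_T$-morphisms reduces to an arrow $\tau : \alpha \to \alpha'$ of $B_T$ compatible with $\rho$ and $\rho'$; this square corresponds to the pair (source, target) of the induced arrow between the two permutative operations in $TX$. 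Since the fibres are discrete, $\gamma$ reduces to a family of morphisms $y_j \to y'_j$ in $X_{i_j}$, which is exactly the data of a levelwise arrow in $TX$ from the domain of one permutative arrow to the domain of the other. Assembling, $(\beta,\gamma)$ is precisely a commutative square in $TX$ between the two permutative arrows determined by $(b,h)$ and $(b',h')$, i.e. an arrow of $(TX)^{[1]}_{T_{\Sigma}X}$. Finally, because $d_T$, $c_T$ and $\alpha_T$ are obtained componentwise from the cotensor data $d_{E_T}, c_{E_T}, \alpha_{E_T}$ (and the $B_T$ analogues) used in constructing $(s_T^{[1]}, p_T^{[1]}, t_T^{[1]})$, under the identifications above these translate exactly to taking domain, codomain, and the identity 2-cell on a permutative arrow of $TX$, which is the definition of $d_{TX,T_{\Sigma}X}$, $c_{TX,T_{\Sigma}X}$ and $\alpha_{TX,T_{\Sigma}X}$.

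The main obstacle I expect is the morphism-level bookkeeping, in particular correctly identifying the natural transformation $\gamma$ and the opfibration lift $\tilde{p}_T^{[1]}(\beta)$ with the levelwise component of a commutative square between permutative arrows in $TX$, and verifying that the naturality of $\gamma$ matches the compatibility axiom in the levelwise/permutative factorisation system used in the proof of Theorem \ref{thm:categorical-algebras-of-operads}. Once this compatibility is in hand the rest is routine bookkeeping.
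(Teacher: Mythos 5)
Your proposal is correct and follows essentially the same route as the paper: apply the explicit calculation of Example \ref{exam:polyfunctor-Cat-middle-dopfib} to the polynomial $(s_T^{[1]},p_T^{[1]},t_T^{[1]})$, identify objects $(b,h)$ (with $b$ an arrow $\rho:\alpha\rho\to\alpha$ of $B_T$ and the fibre consisting of the arrows $\rho:(\alpha\rho,k)\to(\alpha,\rho k)$ of $E_T$) with permutative maps of $TX$, do the analogous bookkeeping for morphisms, and then read off $d_{T,X}$, $c_{T,X}$ and $\alpha_{T,X}$ from the componentwise construction. The paper's proof is exactly this, with the same level of (brief) treatment of the morphism-level and 2-cell-level verifications.
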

\noindent In particular, $T^{[1]}_{\Sigma}X$ is the full subcategory of the arrow category of $TX$ consisting of the permutative maps.
\begin{proof}
By the general calculation of Example \ref{exam:polyfunctor-Cat-middle-dopfib} applied to the polynomial $(s_T^{[1]},p_T^{[1]},t_T^{[1]})$, an object of $T^{[1]}_{\Sigma}X$ is a pair $(b,h)$ where $b$ is a morphism of $B_T$ and $h : (p_T^{[1]})^{-1}\{b\} \to X$ is a functor over $I$. Such a morphism $b$ is of the form $\rho : \alpha\rho \to \alpha$, where $\alpha : (i_j)_{1{\leq}j{\leq}n} \to i$ is in $T$ and $\rho \in \Sigma_n$, and in these terms an element of the fibre $(p_T^{[1]})^{-1}\{b\}$ is a morphism of $E_T$ of the form $\rho : (\alpha\rho,j) \to (\alpha,\rho j)$ where $1 \leq j \leq n$. Thus an object of $T^{[1]}_{\Sigma}X$ consists of $\rho : \alpha\rho \to \alpha$ and $(x_j)_j$ where $x_j \in X_{\rho j}$ for $1 \leq j \leq n$, and one can identify this with the permutative map $(\rho,(1_{x_j})_j)$ in $T^{[1]}_{\Sigma}X$. Similarly unpacking the morphisms of $T^{[1]}_{\Sigma}X$ following the general scheme of Example \ref{exam:polyfunctor-Cat-middle-dopfib} one identifies a morphism between permutative maps in $T^{[1]}_{\Sigma}X$ as morphism between them in the arrow category $(TX)^{[1]}$. Having established this explicit description of $T^{[1]}_{\Sigma}$, one obtains those for $d_{T,X}$, $c_{T,X}$ and $\alpha_{T,X}$, using the explicit description of the 2 and 3-cell map of $\PFun{\Cat}$ which involves inducing arrows from the universal properties of pullbacks and distributivity pullbacks.
\end{proof}
The conical colimit of a diagram
\begin{equation}\label{diag:coinv}
\begin{gathered}
\xygraph{{A}="p0" [r(1.5)] {B}="p1" "p0":@<-1.5ex>"p1"_-{g}|-{}="b" "p0":@<1.5ex>"p1"^-{f}|-{}="t" "t":@{}"b"|(.15){}="d"|(.85){}="c" "d":@{=>}"c"^-{\phi}}
\end{gathered}
\end{equation}
in a 2-category $\ca K$ is called a \emph{coidentifier} \cite{Kelly-2CatLimits}. Thus a cocone for (\ref{diag:coinv}) consists of $q : B \to Q$ such that $q\phi = \id$, and such a cocone exhibits $Q$ as the coidentifier of this diagram when for all $X \in \ca K$, composition with $q$ induces an isomorphism of categories between $\ca K(Q,X)$ and the full subcategory of $\ca K(B,X)$ consisting of those 1-cells $h : B \to X$ such that $h\phi$ is an identity. In other words $q$, is the universal 1-cell which by post-composition makes $\phi$ into an identity. It is more common to consider the \emph{coinverter} of the above diagram, in which $q$ is the universal 1-cell which by post-composition makes $\phi$ into an isomorphism.

When $\phi$ has a \emph{$1$-section}, that is there is a morphism $i : B \to A$ such that $\phi i = \id$, including or not including the data of $i$ in (\ref{diag:coinv}) clearly does not affect the resulting colimit, and in this case $(Q,q)$ is said to be a \emph{reflexive} coidentifier. It is completely straightforward to adapt lemma 2.1 of \cite{KellyLackWalters-Coinverters}, which considers the case of reflexive coinverters, to exhibit an analogue of the  $(3 \times 3)$-lemma for reflexive coidentifiers. From this it follows that in $\Cat$, reflexive coidentifiers commute with finite products, and so reflexive coidentifiers are a type of sifted colimit.
\begin{exam}\label{exam:pi0-as-coidentifier}
For any category $X$ the discrete category of its connected components can be obtained as a coidentifier
\[ \xygraph{!{0;(1.5,0):(0,1)::} {X^{[1]}}="p0" [r] {X}="p1" [r] {\pi_0X.}="p2"
"p0":@<1.5ex>"p1"|(.45){}="t"^-{d_X} "p0":@<-1.5ex>"p1"|(.45){}="b"_-{c_X}:@{.>}"p2" "t":@{}"b"|(.15){}="d"|(.85){}="c" "d":@{=>}"c"^-{\alpha_X}} \]
This coidentifier is reflexive via the functor $i_X : X \to X^{[1]}$ given on objects by $i_Xx =  1_x$. Thus one recovers the fact that $\pi_0 : \Cat \to \Set$ preserves finite products from the siftedness of reflexive coidentifiers.
\end{exam}
Let $\ca X$ be a monoidal 2-category and $\ca S$ be a set of objects of $\ca X$. Denote by $\ca S^*$ the set of tensor products of objects of $\ca S$, that is, the objects of the smallest monoidal subcategory of $\ca X$ containing $\ca S$. Consider the diagram
\begin{equation}\label{diag:for-stable-coidentifier-defn}
\begin{gathered}
\xygraph{!{0;(1.5,0):(0,1)::} {A}="p0" [r] {B}="p1" [r] {Q}="p2"
"p0":@<1.5ex>"p1"|(.45){}="t"^-{f} "p0":@<-1.5ex>"p1"|(.45){}="b"_-{g}:"p2"^-{q} "t":@{}"b"|(.15){}="d"|(.85){}="c" "d":@{=>}"c"^-{\phi}}
\end{gathered}
\end{equation}
in $\ca X$.
\begin{defn}\label{defn:stable-coidentifiers}
We say that $q$ exhibits $Q$ as an \emph{$\ca S$-stable coidentifier} when for all $X$ and $Y \in \ca S^*$, the 2-functor
\[ X \tensor (-) \tensor Y : \ca X \longrightarrow \ca X \]
sends (\ref{diag:for-stable-coidentifier-defn}) to a coidentifier. When $\phi$ has a $1$-section then (\ref{diag:for-stable-coidentifier-defn}) is said to be an \emph{$\ca S$-stable reflexive coidentifier}.
\end{defn}
Taking $X = Y = I$ one finds that for any $\ca S$, an $\ca S$-stable coidentifier is a coidentifier.
\begin{defn}\label{defn:monoidally-stable-coidentifier}
When the 2-cell $\phi$ lives in $\tn{Mon}(\ca X)$, then (\ref{diag:for-stable-coidentifier-defn}) is said to be a \emph{monoidally stable coidentifier} when it is an $\{A,B\}$-stable coidentifier in $\ca X$. When $\phi$ has a $1$-section in $\ca X$, then (\ref{diag:for-stable-coidentifier-defn}) is said to be a \emph{monoidally stable reflexive coidentifier}.
\end{defn}
Note that in the context of Definition \ref{defn:monoidally-stable-coidentifier}, while there are monoid structures on $A$ and $B$, there is not \emph{apriori} a monoid structure on $Q$. However one has
\begin{prop}\label{prop:ref-coidentifers-monoids}
If (\ref{diag:for-stable-coidentifier-defn}) is a monoidally stable reflexive coidentifier in a monoidal 2-category $\ca X$, then there exists a unique monoid structure on $Q$ making it a coidentifier in $\tn{Mon}(\ca X)$. Moreover this coidentifier is preserved by any of the inclusions
\[ \begin{array}{lcccr} {\tn{Mon}(\ca X) \hookrightarrow \tn{Mon}(\ca X)_{\tn{l}}} &&
{\tn{Mon}(\ca X) \hookrightarrow \tn{Mon}(\ca X)_{\tn{c}}} &&
{\tn{Mon}(\ca X) \hookrightarrow \tn{Mon}(\ca X)_{\tn{ps}}.} \end{array} \]
\end{prop}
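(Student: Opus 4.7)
The plan is to transport the monoid structure on $B$ along $q$, verify the monoid axioms for the resulting structure on $Q$, and then extend the universal property to each of the four $2$-categories of strict/lax/colax/pseudo monoid morphisms. I would set $u_Q := q \cdot u_B$, and pivot the rest of the argument on the following key claim: $q \tensor q : B \tensor B \to Q \tensor Q$ is itself a reflexive coidentifier in $\ca X$ of $\phi \tensor \phi$, with $1$-section $i \tensor i$. Granting this, since $\phi$ is a monoid $2$-cell one has $m_B (\phi \tensor \phi) = \phi m_A$, hence $(q m_B)(\phi \tensor \phi) = (q\phi) m_A = \id$, and the universal property of $q \tensor q$ induces a unique $m_Q : Q \tensor Q \to Q$ satisfying $m_Q (q \tensor q) = q m_B$. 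Each monoid axiom for $(Q, m_Q, u_Q)$ then holds because, precomposed with $q^{\tensor n}$ for the appropriate $n$, it reduces to the corresponding axiom on $B$; by iterating the key claim each such $q^{\tensor n}$ is again a coidentifier, and so its $2$-dimensional universal property lets the axioms descend uniquely. By construction $q$ is then a strict morphism of monoids.

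To establish the key claim I would combine $\{A,B\}$-stability with reflexivity. Stability with $X = I$, $Y = B$ exhibits $q \tensor B$ as a reflexive coidentifier of $\phi \tensor B$; stability with $X = B$, $Y = I$ gives $B \tensor q$ as a reflexive coidentifier of $B \tensor \phi$, and analogous statements hold for tensoring with $A$. Writing $q \tensor q = (Q \tensor q)(q \tensor B)$ and using the interchange decomposition
\[ \phi \tensor \phi \;=\; (g \tensor \phi)(\phi \tensor f) \;=\; (\phi \tensor g)(f \tensor \phi) \]
together with the splitting $i \tensor i$ of $\phi \tensor \phi$, the universal property for $q \tensor q$ reduces to the separate universal properties already supplied by single-sided stability. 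Equivalently one may invoke a $(3 \times 3)$-style lemma for reflexive coidentifiers in a monoidal $2$-category, in the spirit of the reflexive-coinverter result of \cite{KellyLackWalters-Coinverters} already cited in the paper. This bookkeeping, together with the analogous induction giving the universality of $q^{\tensor n}$ for $n \geq 3$ needed in the previous paragraph, is the main technical obstacle of the proof.

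For universality in $\tn{Mon}(\ca X)$, given a strict monoid morphism $h : B \to Y$ with $h\phi = \id$, its underlying $1$-cell factors uniquely as $h = h' q$ by the original coidentifier property in $\ca X$, and $h'$ is automatically a strict monoid morphism: precomposing its two monoid morphism equations with $q \tensor q$ and with $1_I$ reduces them to the corresponding equations for $h$, which hold by hypothesis. Monoid $2$-cells factor in the same way componentwise. For the three remaining inclusion statements, a lax, colax, or pseudo monoid morphism $(h, \overline{h}) : B \to Y$ with $h\phi = \id$ has its $1$-cell part factoring as $h'q$ as before, while the coherence $2$-cells $\overline{h}_0$ and $\overline{h}_2$ factor uniquely through the coidentifiers $1_I$ and $q \tensor q$ respectively to give coherence data $(\overline{h'}_0, \overline{h'}_2)$ on $h'$; the lax, colax, or pseudo monoid morphism axioms for $(h', \overline{h'})$ then transport along these same coidentifiers.
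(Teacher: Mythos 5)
Your proposal is correct and follows essentially the same route as the paper: establish that $q^{\tensor n}$ is again a stable reflexive coidentifier, induce $u_Q$ and $m_Q$ from the $1$-dimensional universal properties of $q^{\tensor 0}$ and $q^{\tensor 2}$, and induce the coherence data of lax/colax/pseudo morphisms out of $Q$ from the $2$-dimensional universal properties of those same tensor powers. One caveat: your first route to the key claim, factoring $q \tensor q = (Q \tensor q)(q \tensor B)$, is not self-contained, since $\{A,B\}$-stability says nothing about tensoring with $Q$ (which does not lie in $\{A,B\}^*$); it is exactly the $(3\times 3)$-lemma for reflexive coidentifiers --- your alternative route, and the one the paper uses --- that supplies the missing fact that $X \tensor Q \tensor q^{\tensor n} \tensor Y$ is again a coidentifier.
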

\begin{proof}
We begin by verifying that
\begin{equation}\label{diag:tensor-powers-coidentifier}
\begin{gathered}
\xygraph{!{0;(2,0):(0,1)::} {A^{\tensor n}}="p0" [r] {B^{\tensor n}}="p1" [r] {Q^{\tensor n}}="p2"
"p0":@<1.5ex>"p1"|(.45){}="t"^-{f^{\tensor n}} "p0":@<-1.5ex>"p1"|(.45){}="b"_-{g^{\tensor n}}:"p2"^-{q^{\tensor n}} "t":@{}"b"|(.15){}="d"|(.85){}="c" "d":@{=>}"c"^-{\phi^{\tensor n}}}
\end{gathered}
\end{equation}
is an $\{A,B\}$-stable coidentifier by induction on $n$. In the base case $n = 0$, (\ref{diag:tensor-powers-coidentifier}) is a constant diagram, all constant diagrams of this form are clearly coidentifiers, and so (\ref{diag:tensor-powers-coidentifier}) in this case is an absolute coidentifier. The inductive step follows since by the inductive hypothesis, for all $X$ and $Y$ in $\{A,B\}^*$ the first two rows and first two columns of the evident diagram
\[ \xygraph{!{0;(3.5,0):(0,.3)::}
{X \tensor A \tensor A^{\tensor n} \tensor Y}="p0" [r] {X \tensor A \tensor B^{\tensor n} \tensor Y}="p1" [r] {X \tensor A \tensor Q^{\tensor n} \tensor Y}="p2"
"p0":@<1.5ex>"p1"|(.5){}="t0"^-{} "p0":@<-1.5ex>"p1"|(.5){}="b0"_-{}:"p2"^-{} "t0":@{}"b0"|(.15){}="d0"|(.85){}="c0" "d0":@{=>}"c0"^-{}
"p0" [d] {X \tensor B \tensor A^{\tensor n} \tensor Y}="q0" [r] {X \tensor B \tensor B^{\tensor n} \tensor Y}="q1" [r] {X \tensor B \tensor Q^{\tensor n} \tensor Y}="q2"
"q0":@<1.5ex>"q1"|(.5){}="t1"^-{} "q0":@<-1.5ex>"q1"|(.5){}="b1"_-{}:"q2"^-{} "t1":@{}"b1"|(.15){}="d1"|(.85){}="c1" "d1":@{=>}"c1"^-{}
"q0" [d] {X \tensor Q \tensor A^{\tensor n} \tensor Y}="r0" [r] {X \tensor Q \tensor B^{\tensor n} \tensor Y}="r1" [r] {X \tensor Q \tensor Q^{\tensor n} \tensor Y}="r2"
"r0":@<1.5ex>"r1"|(.5){}="t2"^-{} "r0":@<-1.5ex>"r1"|(.5){}="b2"_-{}:"r2"^-{} "t2":@{}"b2"|(.15){}="d2"|(.85){}="c2" "d2":@{=>}"c2"^-{}
"p0":@<1.5ex>"q0"|(.5){}="t3"^-{} "p0":@<-1.5ex>"q0"|(.5){}="b3"_-{}:"r0"^-{} "t3":@{}"b3"|(.15){}="d3"|(.85){}="c3" "d3":@{<=}"c3"^-{}
"p1":@<1.5ex>"q1"|(.5){}="t4"^-{} "p1":@<-1.5ex>"q1"|(.5){}="b4"_-{}:"r1"^-{} "t4":@{}"b4"|(.15){}="d4"|(.85){}="c4" "d4":@{<=}"c4"^-{}
"p2":@<1.5ex>"q2"|(.5){}="t5"^-{} "p2":@<-1.5ex>"q2"|(.5){}="b5"_-{}:"r2"^-{} "t5":@{}"b5"|(.15){}="d5"|(.85){}="c5" "d5":@{<=}"c5"^-{}} \]
are reflexive coidentifiers, and the $(3 \times 3)$-lemma. Thus the unit and multplication of $Q$ is induced from those of $A$ and $B$ in the evident manner
\[ \xygraph{{\xybox{\xygraph{!{0;(1,0):(0,1)::}
{I}="p0" [r] {I}="p1" [r] {I}="p2"
"p0":@<1.5ex>"p1"|(.5){}="t0"^-{} "p0":@<-1.5ex>"p1"|(.5){}="b0"_-{}:"p2"^-{} "t0":@{}"b0"|(.15){}="d0"|(.85){}="c0" "d0":@{=>}"c0"^-{}
"p0" [d] {A}="q0" [r] {B}="q1" [r] {Q}="q2"
"q0":@<1.5ex>"q1"|(.5){}="t1"^-{} "q0":@<-1.5ex>"q1"|(.5){}="b1"_-{}:"q2"^-{} "t1":@{}"b1"|(.15){}="d1"|(.85){}="c1" "d1":@{=>}"c1"^-{}
"p0":"q0" "p1":"q1" "p2":@{.>}"q2"}}}
[r(4.5)]
{\xybox{\xygraph{!{0;(1.5,0):(0,.6667)::}
{A \tensor A}="p0" [r] {B \tensor B}="p1" [r] {Q \tensor Q}="p2"
"p0":@<1.5ex>"p1"|(.5){}="t0"^-{} "p0":@<-1.5ex>"p1"|(.5){}="b0"_-{}:"p2"^-{} "t0":@{}"b0"|(.15){}="d0"|(.85){}="c0" "d0":@{=>}"c0"^-{}
"p0" [d] {A}="q0" [r] {B}="q1" [r] {Q}="q2"
"q0":@<1.5ex>"q1"|(.5){}="t1"^-{} "q0":@<-1.5ex>"q1"|(.5){}="b1"_-{}:"q2"^-{} "t1":@{}"b1"|(.15){}="d1"|(.85){}="c1" "d1":@{=>}"c1"^-{}
"p0":"q0" "p1":"q1" "p2":@{.>}"q2"}}}} \]
and the monoid axioms for $Q$ follow easily from the 1-dimensional universal properties of (\ref{diag:tensor-powers-coidentifier}).

Suppose that $C$ is a monoid in $\ca X$, and $h : B \to C$ is a lax morphism of monoids such that $h\phi = \id$ in $\tn{Mon}(\ca X)_{\tn{l}}$. By the way that composition works in $\tn{Mon}(\ca X)_{\tn{l}}$, this last equation amounts to the equation $h\phi = \id$ in $\ca X$. By the 1-dimensional universal property of $q$, one has $k : Q \to C$ unique such that $kq = h$. The lax morphism coherence 2-cells $(\overline{k}_0,\overline{k}_2)$ for $k$ are induced from the corresponding coherences for $h$ via the 2-dimensional universal properties of $q^{\tensor 0}=1_I$ and $q^{\tensor 2}$ as in
\[ \xygraph{
{\xybox{\xygraph{!{0;(1.25,0):(0,.5)::}
{I}="p0" [r] {I}="p1" [r] {I}="p2" [dr] {I}="p3"
"p0":@<1.5ex>"p1"|(.45){}="pt"^-{} "p0":@<-1.5ex>"p1"|(.45){}="pb"_-{}(:@/_{1pc}/"p3"_-{},:"p2"^-{1_I}:"p3"^-{1_I}) "pt":@{}"pb"|(.15){}="pd"|(.85){}="pc" "pd":@{=>}"pc"^-{\id}
"p0" [d(2)] {A}="q0" [r] {B}="q1" [r] {Q}="q2" [dr] {C}="q3"
"q0":@<1.5ex>"q1"|(.45){}="qt"^-{} "q0":@<-1.5ex>"q1"|(.45){}="qb"_-{}(:@/_{1pc}/"q3"_-{h},:"q2"|(.6)*=<5pt>{}:"q3"_-{k}) "qt":@{}"qb"|(.15){}="qd"|(.85){}="qc" "qd":@{=>}"qc"^-{\phi}
"p0":"q0" "p1":"q1" "p2":"q2"|(.525)*=<3pt>{} "p3":"q3" "q2" [u(.6)l(.4)] :@{=>}[d(.9)] [u(.65)l(.15)] {\scriptstyle{\overline{h}_0}} "q2" [u(.7)r(.5)] :@{:>}[d(.7)] [u(.35)r(.2)] {\scriptstyle{\overline{k}_0}}}}}
[r(5)]
{\xybox{\xygraph{!{0;(1.25,0):(0,.5)::}
{A^{\tensor 2}}="p0" [r] {B^{\tensor 2}}="p1" [r] {Q^{\tensor 2}}="p2" [dr] {C^{\tensor 2}}="p3"
"p0":@<1.5ex>"p1"|(.35){}="pt"^-{} "p0":@<-1.5ex>"p1"|(.35){}="pb"_-{}(:@/_{1pc}/"p3"_-{},:"p2"^-{q^{\tensor 2}}:"p3"^-{k^{\tensor 2}}) "pt":@{}"pb"|(.15){}="pd"|(.85){}="pc" "pd":@{=>}"pc"^-{\phi^{\tensor 2}}
"p0" [d(2)] {A}="q0" [r] {B}="q1" [r] {Q}="q2" [dr] {C}="q3"
"q0":@<1.5ex>"q1"|(.45){}="qt"^-{} "q0":@<-1.5ex>"q1"|(.45){}="qb"_-{}(:@/_{1pc}/"q3"_-{h},:"q2"|(.6)*=<5pt>{}:"q3"_-{k}) "qt":@{}"qb"|(.15){}="qd"|(.85){}="qc" "qd":@{=>}"qc"^-{\phi}
"p0":"q0" "p1":"q1" "p2":"q2"|(.525)*=<3pt>{} "p3":"q3" "q2" [u(.6)l(.4)] :@{=>}[d(.9)] [u(.65)l(.15)] {\scriptstyle{\overline{h}_2}} "q2" [u(.7)r(.5)] :@{:>}[d(.7)] [u(.35)r(.2)] {\scriptstyle{\overline{k}_2}}}}}} \]
and the lax morphism coherence axioms for $(\overline{k}_0,\overline{k}_2)$ follow from those for $(\overline{h}_0,\overline{h}_2)$ and the 2-dimensional universal properties of (\ref{diag:tensor-powers-coidentifier}). In this way (\ref{diag:for-stable-coidentifier-defn}) has the 1-dimensional universal property of a coidentifier in $\tn{Mon}(\ca X)_{\tn{l}}$, and reversing the 2-cells in this discussion exhibits the 1-dimensional universal property of a coidentifier in $\tn{Mon}(\ca X)_{\tn{c}}$. By the 2-dimensional universal properties of (\ref{diag:tensor-powers-coidentifier}) again, $\overline{k}_0$ and $\overline{k}_2$ are isomorphisms or identities iff $\overline{h}_0$ and $\overline{h}_2$ are, and so (\ref{diag:for-stable-coidentifier-defn}) also has the 1-dimensional universal property of a coidentifier in $\tn{Mon}(\ca X)_{\tn{ps}}$ and in $\tn{Mon}(\ca X)$. In a similar manner, the 2-dimensional universal properties of (\ref{diag:for-stable-coidentifier-defn}) in the various 2-categories of monoids under consideration are verified directly using the 2-dimensional universal properties of (\ref{diag:tensor-powers-coidentifier}).
\end{proof}
\begin{exam}\label{exam:coidentifier-of-2monads}
For any 2-category $\ca K$ and any endo-2-functor $R$ of $\ca K$, the 2-functor on the left
\[ \begin{array}{lccr} {(-) \comp R : \tn{End}(\ca K) \longrightarrow \tn{End}(\ca K)} &&& {R \comp (-) : \tn{End}(\ca K) \longrightarrow \tn{End}(\ca K)} \end{array} \]
preserves all colimits, and the 2-functor on the right preserves any colimits that $R$ preserves. Thus given 2-monads $S$ and $T$ on $\ca K$ which preserve reflexive coidentifiers, a 2-monad 2-cell $\phi$ between them with a 1-section in $\tn{End}(\ca K)$, and a coidentifier
\[ \xygraph{!{0;(1.5,0):(0,1)::} {S}="p0" [r] {T}="p1" [r] {Q}="p2"
"p0":@<1.5ex>"p1"|(.45){}="t"^-{} "p0":@<-1.5ex>"p1"|(.45){}="b"_-{}:"p2"^-{q} "t":@{}"b"|(.15){}="d"|(.85){}="c" "d":@{=>}"c"^-{\phi}} \]
of $\phi$ in $\tn{End}(\ca K)$, then by Proposition \ref{prop:ref-coidentifers-monoids} there is a unique 2-monad structure on $Q$ making $q$ the coidentifier of $\phi$ in $\tn{Mnd}(\ca K)$.
\end{exam}
\begin{rem}\label{rem:for-def-T1-mod-sigma}
In the situation of Definition \ref{defn:T1-Sigma} the 2-monads $T$ and $T^{[1]}_{\Sigma}$ preserve sifted colimits by \cite{Weber-PolynomialFunctors} Theorem 4.5.1. Moreover since $p_T$ is a discrete fibration it reflects identities, and so the square in
\[ \xygraph{!{0;(1.5,0):(0,.4)::} {I}="p0" [ur] {E^{[1]}_T}="p1" [r] {B^{[1]}_T}="p2" [dr] {I}="p3" [dl] {B_T}="p4" [l] {E_T}="p5" "p0":@{<-}"p1"^-{s^{[1]}_T}:"p2"^-{p^{[1]}_T}:"p3"^-{t^{[1]}_T}:@{<-}"p4"^-{t_T}:@{<-}"p5"^-{p_T}:"p0"^-{s_T}
"p5":"p1"_-{i_{E_T}} "p4":"p2"_-{i_{B_T}}} \]
is easily verified to be a pullback. Thus this diagram exhibits a 1-section of the 2-cell $(\alpha_{B_T},\alpha_{E_T})$ in $\Polyc{\Cat}(I,I)$, and so the situation of Definition \ref{defn:T1-Sigma} conforms to that of Example \ref{exam:coidentifier-of-2monads}. Hence the coidentifier of the 2-monad 2-cell $\alpha_T$ exists, and is computed as in $\tn{End}(\Cat/I)$.
\end{rem}
\begin{defn}\label{defn:T-mod-Sigma}
The coidentifying monad morphism of Remark \ref{rem:for-def-T1-mod-sigma} is denoted
\[ q_T : T \longrightarrow T/\Sigma. \]
\end{defn}
\begin{rem}\label{rem:T-mod-Sigma-restricts-to-discrete-objects}
When $X \to I$ in $\Cat/I$ is discrete all of $TX$'s morphisms are permutative, and so by Lemma \ref{lem:T1-Sigma-explicit}, $T^{[1]}_{\Sigma}X = (TX)^{[1]}$, whence $T/\Sigma(X) = \pi_0(TX)$ by Example \ref{exam:pi0-as-coidentifier}. Thus the 2-monad $T/\Sigma$ just defined restricts to a monad on $\Set/I$.
\end{rem}
In the remainder of this section we establish analogues of Theorems \ref{thm:categorical-algebras-of-operads}, \ref{thm:catalg-morphisms-for-operads} and \ref{thm:catalg-2cells-for-operads}, which say that the algebras of $T/\Sigma$ correspond to the commutative variants of the algebras of $T$. These results are established by using endomorphism 2-monads and their variants \cite{Kelly-CoherenceLaxAlgDistLaws, KellyLack-PropertyLikeStructures, Lack-2CatsCompanion}, the details of which we shall recall as needed.

Let $\ca K$ be a complete 2-category, and for $A \in \ca K$ and $X \in \Cat$ we denote by $A^X$ the cotensor of $X$ with $A$, whose universal property gives isomorphisms as on the left
\[ \begin{array}{lccr} {\ca K(B,A^X) \iso \Cat(X,\ca K(B,A))} &&&
{\langle A,B\rangle C = B^{\ca K(C,A)}} \end{array} \]
2-naturally in $B$. Then the formula on the right for $A$, $B$ and $C \in \ca K$ defines $\langle A,B \rangle \in \tn{End}(\ca K)$. The assignation $B \mapsto \langle A,B \rangle$ is the object map of a 2-functor
\[ \begin{array}{lccr} {\langle A,- \rangle : \ca K \longrightarrow \tn{End}(\ca K)} &&& {\varepsilon_{A,B} : \langle A,B \rangle A \longrightarrow B} \end{array} \]
which is right adjoint to the 2-functor given by evaluating at $A$, and the counit of this adjunction is as denoted on the right in previous display. As explained in \cite{GordonPower-EnrichmentVariation, JanelidzeKelly-Actegories}, $\langle A,B \rangle$ is thus the hom for an enrichment of $\ca K$ in $\tn{End}(\ca K)$. The units $u_A : 1_{\ca K} \to \langle A,A \rangle$ and compositions $c_{ABC} : \langle B,C \rangle \langle A,B \rangle \to \langle A,C \rangle$ for this enrichment are the unique 2-natural transformations making
\[ \xygraph{{\xybox{\xygraph{!{0;(1.5,0):(0,.6667)::} {A}="p0" [r] {\langle A,A \rangle}="p1" [d] {A}="p2" "p0":"p1"^-{u_{A,A}}:"p2"^-{\varepsilon_{A,A}}:@{<-}"p0"^-{1_A}}}}
[r(4)d(.07)]
{\xybox{\xygraph{!{0;(2.5,0):(0,.4)::} {\langle B,C \rangle \langle A,B \rangle A}="p0" [r] {\langle A,C \rangle A}="p1" [d] {C}="p2" [l] {\langle B,C \rangle B}="p3" "p0":"p1"^-{c_{ABC,A}}:"p2"^-{\varepsilon_{A,C}}:@{<-}"p3"^-{\varepsilon_{B,C}}:@{<-}"p0"^-{\langle B,C \rangle \varepsilon_{A,B}}}}}} \]
commute. In particular for $A \in \ca K$, one has the corresponding \emph{endomorphism 2-monad} $\langle A,A \rangle$.

By the universal property of $\varepsilon_{A,A}$ one has a bijection between 2-natural transformations $\phi : T \to \langle A,A \rangle$ and morphisms $a : TA \to A$ in $\ca K$, and when $T$ is a 2-monad, $\phi$ satisfies the axioms of a strict morphism of 2-monads iff $a$ satisfies the axioms of a strict $T$-algebra. As Kelly first observed in \cite{Kelly-CoherenceLaxAlgDistLaws}, this correspondence extends to give a description of lax and pseudo algebra structures in terms of morphisms of 2-monads. In the lax case the data for a lax morphism of 2-monads $T \to \langle A,A \rangle$ includes an underlying 2-natural transformation $\phi$ as above, together with modifications $\overline{\phi}_0$ and $\overline{\phi}_2$ as in
\[ \xygraph{*{\xybox{\xygraph{!{0;(1.5,0):(0,.6667)::} {1_{\ca K}}="p0" [r] {T}="p1" [d] {\langle A,A \rangle}="p2" "p0":"p1"^-{\eta}:"p2"^-{\phi}:@{<-}"p0"^-{u_A} "p0" [d(.4)r(.6)] :@{=>}[r(.2)]^-{\overline{\phi}_0}}}}
[r(2.8)]
*!(0,.03){\xybox{\xygraph{!{0;(1.5,0):(0,.6667)::} {T^2}="p0" [r] {T}="p1" [d] {\langle A,A \rangle}="p2" [l] {\langle A,A \rangle^2}="p3" "p0":"p1"^-{\mu}:"p2"^-{\phi}:@{<-}"p3"^-{c_{AAA}}:@{<-}"p0"^-{\phi^2} "p0" [d(.55)r(.4)] :@{=>}[r(.2)]^-{\overline{\phi}_2}}}}
[r(3)]
*!(0,-.02){\xybox{\xygraph{!{0;(1.5,0):(0,.6667)::} {A}="p0" [r] {TA}="p1" [d] {A}="p2" "p0":"p1"^-{\eta_A}:"p2"^-{a}:@{<-}"p0"^-{1_A} "p0" [d(.4)r(.6)] :@{=>}[r(.2)]^-{\overline{a}_0}}}}
[r(2.8)]
*{\xybox{\xygraph{!{0;(1.5,0):(0,.6667)::} {T^2A}="p0" [r] {TA}="p1" [d] {A}="p2" [l] {TA}="p3" "p0":"p1"^-{\mu_A}:"p2"^-{a}:@{<-}"p3"^-{a}:@{<-}"p0"^-{Ta} "p0" [d(.55)r(.4)] :@{=>}[r(.2)]^-{\overline{a}_2}}}}} \]
which by the 2-dimensional part of the universal property of $\varepsilon_{A,A}$, corresponds to 2-cells $\overline{a}_0$ and $\overline{a}_2$ in $\ca K$. Moreover the lax morphism coherence axioms on $\overline{\phi}_0$ and $\overline{\phi}_2$ correspond to the lax algebra coherence axioms on $\overline{a}_0$ and $\overline{a}_2$, and $\overline{\phi}_0$ and $\overline{\phi}_2$ are invertible iff $\overline{a}_0$ and $\overline{a}_2$ are. In this way, lax or pseudo morphisms $T \to \langle A,A \rangle$ of 2-monads, may be identified with lax or pseudo $T$-algebra structures on $A$.
\begin{thm}\label{thm:commutative-algebras-of-operads}
Let $T$ be an operad with object set $I$ and let $H \to I$ be an object of $\Cat/I$. To give $H$ the structure of a lax, pseudo or strict $(T/\Sigma)$-algebra is to give it the structure of a commutative lax, commutative pseudo or commutative strict morphism $H : T \to \CatAsOp$ respectively.
\end{thm}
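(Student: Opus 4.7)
The strategy is to pass through the endomorphism 2-monad $\langle H,H \rangle$ and use the universal property of $q_T$ as a coidentifier. Recall that lax, pseudo and strict $S$-algebra structures on $H$, for any 2-monad $S$ on $\Cat/I$, correspond respectively to lax, pseudo and strict monad morphisms $S \to \langle H,H \rangle$ via the adjunction with counit $\varepsilon_{H,H}$. So the theorem reduces to identifying, under this correspondence, the monad morphisms $T/\Sigma \to \langle H,H \rangle$ of each flavour with the commutative morphisms $T \to \CatAsOp$ of the same flavour.

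By Remark \ref{rem:for-def-T1-mod-sigma}, $q_T$ is the coidentifier of $\alpha_T$ in $\tn{Mnd}(\Cat/I)$ and arises from a monoidally stable reflexive coidentifier in $\tn{End}(\Cat/I)$. By Proposition \ref{prop:ref-coidentifers-monoids} this coidentifier is preserved by the inclusions into $\tn{Mnd}(\Cat/I)_{\tn l}$ and $\tn{Mnd}(\Cat/I)_{\tn{ps}}$. Consequently, for each of the three flavours (strict, lax, pseudo), monad morphisms $T/\Sigma \to \langle H,H \rangle$ are in bijection with monad morphisms $\phi : T \to \langle H,H \rangle$ of the same flavour such that $\phi \cdot \alpha_T = \id$. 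Transposing across the adjunction defining $\langle H,H \rangle$, and using the 2-dimensional part of that adjunction, this condition becomes $a \cdot \alpha_{T,H} = \id$, where $a : TH \to H$ is the action underlying the corresponding $T$-algebra structure. By Theorem \ref{thm:categorical-algebras-of-operads}, such a $T$-algebra structure is the same thing as a lax/pseudo/strict morphism $H : T \to \CatAsOp$, so all that remains is to match $a \cdot \alpha_{T,H} = \id$ with commutativity in the sense of Definition \ref{defn:commutative-morphism-into-CatAsOp}.

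This final identification is the main content. By Lemma \ref{lem:T1-Sigma-explicit}, $T^{[1]}_{\Sigma}H$ is the full subcategory of $(TH)^{[1]}$ consisting of permutative maps, and the component of $\alpha_{T,H}$ at such a permutative map $p$ is $p$ itself, viewed as a morphism of $TH$. Hence $a \cdot \alpha_{T,H} = \id$ holds if and only if the functor $a$ sends every permutative map in $TH$ to an identity in $H$. Inspecting the analysis in the proof of Theorem \ref{thm:categorical-algebras-of-operads}, a generic permutative map has the form $(\rho,(1_{y_{\rho j}})_j) : (\alpha\rho,(y_{\rho j})_j) \to (\alpha,(y_j)_j)$, and the action of $a$ on it is precisely the component at $(y_j)_j$ of the symmetry $\xi_{\alpha,\rho} : H_{\alpha\rho}c_{\rho} \to H_{\alpha}$. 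Thus $a$ annihilates all permutative maps exactly when every $\xi_{\alpha,\rho}$ is an identity, which is the defining condition of commutativity.

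The main obstacle is not any single technical step but rather the bookkeeping of threading the various universal properties together: one must verify that the three bijections (endomorphism 2-monad, coidentifier, Theorem \ref{thm:categorical-algebras-of-operads}) are compatible with the lax/pseudo/strict distinctions in a consistent way, and in particular that Proposition \ref{prop:ref-coidentifers-monoids} genuinely applies to all three flavours simultaneously. Once this is in place, the identification of the coidentifier condition with commutativity is the direct unpacking given above.
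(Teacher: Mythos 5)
Your proposal is correct and follows essentially the same route as the paper's proof: reduce to monad morphisms into the endomorphism 2-monad $\langle H,H\rangle$, use Proposition \ref{prop:ref-coidentifers-monoids} and the coidentifier universal property of $q_T$ to translate the $(T/\Sigma)$-structure into a $T$-structure whose action kills $\alpha_{T,H}$, and then invoke Theorem \ref{thm:categorical-algebras-of-operads} together with the explicit description of $\alpha_{T,H}$ from Lemma \ref{lem:T1-Sigma-explicit} to identify that condition with the vanishing of the symmetries. Your extra unpacking of the permutative maps and their images under the action as the components of $\xi_{\alpha,\rho}$ is a correct and slightly more explicit rendering of the paper's final step.
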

\begin{proof}
By Definition \ref{defn:T-mod-Sigma} and Proposition \ref{prop:ref-coidentifers-monoids} to give a lax morphism $T/\Sigma \to \langle H,H \rangle$ of 2-monads is to give a lax morphism $T \to \langle H,H \rangle$ such that underlying 2-natural transformation $\phi$ post-composes with $\alpha_T$ of Definition \ref{defn:T1-Sigma} to give an identity. By the universal property of $\varepsilon_{H,H}$ this is the same as giving a lax $T$-algebra structure on $H$, whose 1-cell datum $a : TH \to H$ post-composes with $\alpha_{T,H}$ to give an identity. By Theorem \ref{thm:categorical-algebras-of-operads} a lax $T$-algebra structure on $H$ is the same thing as a lax morphism $H : T \to \CatAsOp$, and by the explicit description of $\alpha_{T,H}$ provided by Lemma \ref{lem:T1-Sigma-explicit}, the condition that $a : TH \to H$ post-composes with $\alpha_{T,H}$ to give an identity corresponds to the condition that the symmetries of $H : T \to \CatAsOp$ are identities. The pseudo and strict cases follow in the same way by considering pseudo and strict morphisms of 2-monads $T \to \langle H,H \rangle$.
\end{proof}
As Kelly also understood \cite{Kelly-CoherenceLaxAlgDistLaws}, the different types of morphisms of algebras of a 2-monad can also be regarded as morphisms of 2-monads. In the above setting of a complete 2-category $\ca K$, given $f : A \to B$ in $\ca K$ and $T \in \tn{End}(\ca K)$, data $(\tilde{a},\tilde{b},\tilde{\phi})$
\[ \xygraph{{\xybox{\xygraph{!{0;(1.5,0):(0,.6667)::} {T}="p0" [r] {\langle B,B \rangle}="p1" [d] {\langle A,B \rangle}="p2" [l] {\langle A,A \rangle}="p3" "p0":"p1"^-{\tilde{b}}:"p2"^-{\langle f,B \rangle}:@{<-}"p3"^-{\langle A,f \rangle}:@{<-}"p0"^-{\tilde{a}} "p0" [d(.55)r(.4)] :@{=>}[r(.2)]^-{\tilde{\phi}}}}}
[r(4)d(.02)]
{\xybox{\xygraph{!{0;(1.5,0):(0,.6667)::} {TA}="p0" [r] {A}="p1" [d] {B}="p2" [l] {TB}="p3" "p0":"p1"^-{a}:"p2"^-{f}:@{<-}"p3"^-{b}:@{<-}"p0"^-{Tf} "p0" [d(.55)r(.4)] :@{=>}[r(.2)]^-{\phi}}}}} \]
in $\tn{End}(\ca K)$ is in bijection with the data $(a,b,\phi)$ in $\ca K$ by the universal property of $\varepsilon_{A,B}$. Let $\{f,f\}_{\tn{l}}$ be the comma object $\langle A,f \rangle \downarrow \langle f,B \rangle$, denote its defining comma square in $\tn{End}(\ca K)$ as on the left
\[ \xygraph{{\xybox{\xygraph{!{0;(1.5,0):(0,.6667)::} {\{f,f\}_{\tn{l}}}="p0" [r] {\langle B,B \rangle}="p1" [d] {\langle A,B \rangle}="p2" [l] {\langle A,A \rangle}="p3" "p0":"p1"^-{\tilde{b}_f}:"p2"^-{\langle f,B \rangle}:@{<-}"p3"^-{\langle A,f \rangle}:@{<-}"p0"^-{\tilde{a}_f} "p0" [d(.55)r(.4)] :@{=>}[r(.2)]^-{\tilde{\phi}_f}}}}
[r(4)d(.05)]
{\xybox{\xygraph{!{0;(1.5,0):(0,.6667)::} {\{f,f\}_{\tn{l}}A}="p0" [r] {A}="p1" [d] {B}="p2" [l] {\{f,f\}_{\tn{l}}B}="p3" "p0":"p1"^-{a_f}:"p2"^-{f}:@{<-}"p3"^-{b_f}:@{<-}"p0"^-{\{f,f\}_{\tn{l}}(f)} "p0" [d(.55)r(.4)] :@{=>}[r(.2)]^-{\phi_f}}}}} \]
and denote the corresponding data in $\ca K$ as on the right. In terms of this data in $\ca K$, the 1-dimensional part of the comma object universal property says that given $(a,b,\phi)$ as above, one has a unique 2-natural transformation $\phi' : T \to \{f,f\}_{\tn{l}}$ such that $a_f\phi'_A = a$, $b_f\phi'_B = b$ and $\phi_f\phi'_A = \phi$. In particular from
\[ \xygraph{{\xybox{\xygraph{!{0;(1.5,0):(0,.6667)::} {A}="p0" [r] {A}="p1" [d] {B}="p2" [l] {B}="p3" "p0":"p1"^-{1_A}:"p2"^-{f}:@{<-}"p3"^-{1_B}:@{<-}"p0"^-{f}
"p0" [d(.55)r(.4)] :@{=>}[r(.2)]^-{\id}}}}
[r(5.5)]
{\xybox{\xygraph{!{0;(2.5,0):(0,.4)::} {\{f,f\}_{\tn{l}}^2A}="p0" [r] {\{f,f\}_{\tn{l}}A}="p1" [r] {A}="p2" [d] {B}="p3" [l] {\{f,f\}_{\tn{l}}B}="p4" [l] {\{f,f\}_{\tn{l}}^2B}="p5" "p0":"p1"^-{\{f,f\}a_f}:"p2"^-{a_f}:"p3"^-{f}:@{<-}"p4"^-{b_f}:@{<-}"p5"^-{\{f,f\}b_f}:@{<-}"p0"^-{\{f,f\}_{\tn{l}}^2(f)} "p1":"p4"|-{\{f,f\}_{\tn{l}}(f)}
"p0" [d(.55)r(.425)] :@{=>}[r(.15)]^-{\{f,f\}_{\tn{l}}\phi_f}
"p1" [d(.55)r(.425)] :@{=>}[r(.15)]^-{\phi_f}}}}} \]
one induces $\eta^f : 1_{\ca K} \to \{f,f\}_{\tn{l}}$ and $\mu^f : \{f,f\}_{\tn{l}}^2 \to \{f,f\}_{\tn{l}}$ unique providing the unit and multiplication of a 2-monad, making $a_f$ and $b_f$ into strict algebra structures for this 2-monad, and $(f,\phi_f)$ into a lax morphism between them. Moreover, with respect to this 2-monad structure on $\{f,f\}_{\tn{l}}$ and the endomorphism 2-monad structures on $\langle A,A \rangle$ and $\langle B,B \rangle$, $\tilde{a}_f$ and $\tilde{b}_f$ become strict morphisms of 2-monads.

From the universal property of $\varepsilon_{A,B}$ it follows that to give a lax, pseudo or strict morphism $T \to \{f,f\}_{\tn{l}}$ of 2-monads, is the same as giving $A$ and $B$ lax, pseudo or strict algebra structures respectively, and a 2-cell $\phi$ providing the coherence making $(f,\phi)$ a lax morphism of $T$-algebras. Composing with $\tilde{a}_f$ and $\tilde{b}_f$ one recovers the $T$-algebra structures on $A$ and $B$ as morphisms of 2-monads. To summarise, $\{f,f\}_{\tn{l}}$ is a 2-monad on $\ca K$ which classifies lax morphisms of algebras with underlying 1-cell $f$. Replacing the comma object $\langle A,f \rangle \downarrow \langle f,B \rangle$ in the above discussion by either $\langle f,B \rangle \downarrow \langle A,f \rangle$, the isocomma object or the pullback, produces the 2-monads $\{f,f\}_{\tn{c}}$, $\{f,f\}_{\tn{ps}}$ and $\{f,f\}$, which similarly classify colax, pseudo and strict morphisms of algebras with underlying 1-cell $f$ respectively.
\begin{thm}\label{thm:comalg-morphisms-for-operads}
Let $T$ be an operad with object set $I$ and let $f : H \to K$ be a morphism of $\Cat/I$. Suppose also that one has the structure of commutative lax morphism $T \to \CatAsOp$ on both $H$ and $K$. Then to give $f$ the structure of lax, colax, pseudo or strict $(T/\Sigma)$-algebra morphism, is to give $f$ the structure of lax-natural, colax-natural, pseudo-natural or natural transformation respectively.
\end{thm}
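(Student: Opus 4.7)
The plan is to imitate the strategy of Theorem \ref{thm:commutative-algebras-of-operads}, but with the endomorphism 2-monads $\langle H,H\rangle$ and $\langle K,K\rangle$ replaced by the comma-object 2-monads $\{f,f\}_{\tn{l}}$, $\{f,f\}_{\tn{c}}$, $\{f,f\}_{\tn{ps}}$ and $\{f,f\}$ recalled in the discussion following Theorem \ref{thm:commutative-algebras-of-operads}, which classify lax, colax, pseudo and strict algebra morphisms with underlying $1$-cell $f$ respectively.

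Concretely, I would treat the lax case first. By the universal property of $\{f,f\}_{\tn{l}}$, giving a lax $(T/\Sigma)$-algebra morphism structure on $f$ compatible with the lax $(T/\Sigma)$-algebra structures on $H$ and $K$ corresponding (via Theorem \ref{thm:commutative-algebras-of-operads}) to our given commutative lax morphisms into $\CatAsOp$, is the same thing as giving a lax morphism of 2-monads $\psi : T/\Sigma \to \{f,f\}_{\tn{l}}$ whose composites $\tilde{a}_f\psi$ and $\tilde{b}_f\psi$ are those two $(T/\Sigma)$-algebra structures. By Proposition \ref{prop:ref-coidentifers-monoids} applied to the reflexive coidentifier $q_T$ in $\tn{Mnd}(\Cat/I)_{\tn{l}}$, this is in turn the same as giving a lax morphism of 2-monads $\phi : T \to \{f,f\}_{\tn{l}}$ whose underlying 2-natural transformation post-composes with $\alpha_T$ to give an identity. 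The universal property of $\{f,f\}_{\tn{l}}$ then presents such a $\phi$ as a triple consisting of lax $T$-algebra structures on $H$ and $K$ and a coherence 2-cell making $f$ into a lax $T$-algebra morphism.

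What remains is to recognise the constraint $\phi\cdot\alpha_T=\id$ as being exactly the requirement that the lax $T$-algebra structures on $H$ and $K$ so produced are commutative. This is where the 2-dimensional universal property of the defining comma square for $\{f,f\}_{\tn{l}}$ enters: a 2-cell into $\{f,f\}_{\tn{l}}$ is uniquely determined by its whiskerings with $\tilde{a}_f$ and $\tilde{b}_f$ together with a compatibility with the comma 2-cell $\tilde{\phi}_f$, and this compatibility is trivially satisfied when both whiskerings are identities. Hence $\phi\cdot\alpha_T$ is an identity iff both $(\tilde{a}_f\phi)\cdot\alpha_T$ and $(\tilde{b}_f\phi)\cdot\alpha_T$ are, and by the proof of Theorem \ref{thm:commutative-algebras-of-operads} these in turn are identities iff the lax $T$-algebra structures on $H$ and $K$ are commutative, which they are by our hypothesis. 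Applying Theorem \ref{thm:catalg-morphisms-for-operads} to the resulting lax $T$-algebra morphism structure on $f$ then identifies it with a lax-natural transformation in the sense of Definition \ref{defn:lax-nat}.

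The colax, pseudo and strict cases follow by the same argument, using $\{f,f\}_{\tn{c}}$, $\{f,f\}_{\tn{ps}}$ and $\{f,f\}$ in place of $\{f,f\}_{\tn{l}}$, and invoking Proposition \ref{prop:ref-coidentifers-monoids} in $\tn{Mnd}(\Cat/I)_{\tn{c}}$, $\tn{Mnd}(\Cat/I)_{\tn{ps}}$ and $\tn{Mnd}(\Cat/I)$ respectively. The main substantive point in the whole argument is the reduction of $\phi\cdot\alpha_T=\id$ to commutativity of the induced algebra structures on $H$ and $K$ just described; everything else is a formal combination of the universal properties of coidentifiers and of the classifier 2-monads $\{f,f\}_{\ast}$ with the previously established Theorems \ref{thm:categorical-algebras-of-operads}, \ref{thm:catalg-morphisms-for-operads} and \ref{thm:commutative-algebras-of-operads}.
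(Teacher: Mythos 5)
Your proposal is correct and follows essentially the same route as the paper's proof: reduce to (lax, colax, pseudo, strict) 2-monad morphisms $T/\Sigma \to \{f,f\}_{*}$, use Proposition \ref{prop:ref-coidentifers-monoids} and the coidentifier presentation of $q_T$ to pass to morphisms out of $T$ killed by $\alpha_T$, and then use the 2-dimensional universal property of the defining comma square to identify that condition with commutativity of the induced structures on $H$ and $K$, concluding via Theorem \ref{thm:catalg-morphisms-for-operads}. No substantive differences.
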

\begin{proof}
To give $f$ the structure of a lax $(T/\Sigma)$-algebra morphism is to give a lax morphism $(T/\Sigma) \to \{f,f\}_{\tn{l}}$ of 2-monads whose composites with $\tilde{a}_f$ and $\tilde{b}_f$ correspond as monad morphisms to the given $T/\Sigma$-algebra structures on $H$ and $K$. By Definition \ref{defn:T-mod-Sigma} and Proposition \ref{prop:ref-coidentifers-monoids}, to give a lax morphism $(T/\Sigma) \to \{f,f\}_{\tn{l}}$, is to give a lax morphism $\phi : T \to \{f,f\}_{\tn{l}}$ whose underlying 2-natural transformation post composes with $\alpha_T$ to an identity. This last condition is equivalent, by the 2-dimensional universal property of the defining comma square for $\{f,f\}_{\tn{l}}$ in $\tn{End}(\ca K)$, to the condition that the underlying 2-natural transformations of $\tilde{a}_f\phi$ and $\tilde{b}_f\phi$ post compose with $\alpha_T$ to identities, but this just says in turn that the underlying $T$-algebra structures on $H$ and $K$ correspond to commutative lax morphisms $T \to \CatAsOp$. Thus the result follows from Theorem \ref{thm:catalg-morphisms-for-operads} in the lax case. For colax, pseudo and strict $T/\Sigma$-algebra morphisms, one argues in the same way using $\{f,f\}_{\tn{c}}$, $\{f,f\}_{\tn{ps}}$ and $\{f,f\}$ respectively.
\end{proof}
One also has 2-monads that classify algebra 2-cells. In the situation of a complete 2-category $\ca K$, 1-cells $f$ and $g : A \to B$, and a 2-cell $\gamma : f \to g$, one can define the comma object
\[ \xygraph{!{0;(2,0):(0,.5)::} {\{\gamma,\gamma\}_{\tn{l}}}="p0" [r] {\langle B,B \rangle}="p1" [d] {\langle A,B \rangle^{[1]}}="p2" [l] {\langle A,A \rangle}="p3" "p0":"p1"^-{\tilde{b}_{\gamma}}:"p2"^-{\langle \phi,B \rangle}:@{<-}"p3"^-{\langle A,\phi \rangle}:@{<-}"p0"^-{\tilde{a}_{\gamma}} "p0" [d(.55)r(.425)] :@{=>}[r(.15)]^-{\tilde{\phi}_{\gamma}}} \]
in $\tn{End}(\ca K)$. In a similar manner to our discussion of $\{f,f\}_{\tn{l}}$ above, one can then exhibit the unit and multiplication for a 2-monad structure on $\{\gamma,\gamma\}_{\tn{l}}$, and describe a bijection between lax morphisms of 2-monads $T \to \{\gamma,\gamma\}_{\tn{l}}$ and algebra 2-cells, between lax morphisms of lax $T$-algebras. As before one classifies such algebra 2-cells between stricter types of algebra by using the corresponding stricter type of morphism of 2-monads $T \to \{\gamma,\gamma\}_{\tn{l}}$, and one classifies algebra 2-cells between colax, pseudo and strict morphisms by considering the appropriate 2-monad $\{\gamma,\gamma\}_{\tn{c}}$, $\{\gamma,\gamma\}_{\tn{ps}}$ and $\{\gamma,\gamma\}$, obtained by reversing the direction of the comma object, taking an isocomma object or a pullback respectively. The proof of 
\begin{thm}\label{thm:comalg-2cells-for-operads}
Let $T$ be an operad with object set $I$. Suppose that one has commutative lax morphisms of operads $H$ and $K : T \to \CatAsOp$, and lax-natural (resp. colax-natural) transformations $(f,\overline{f})$ and $(g,\overline{g}) : H \to K$. Then to give a modification $(f,\overline{f}) \to (g,\overline{g})$ is to give an algebra 2-cell between the corresponding lax (resp. colax) morphisms of $(T/\Sigma)$-algebras.
\end{thm}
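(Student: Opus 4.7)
The plan is to adapt the argument for Theorem \ref{thm:comalg-morphisms-for-operads} by using the algebra $2$-cell classifying $2$-monad $\{\gamma,\gamma\}_{\tn{l}}$ (or $\{\gamma,\gamma\}_{\tn{c}}$ in the colax case) in place of the algebra morphism classifying $2$-monad $\{f,f\}_{\tn{l}}$. To give an algebra $2$-cell between lax morphisms of $(T/\Sigma)$-algebras $(f,\overline{f}) \to (g,\overline{g})$ is equivalent, by the universal property discussed just before the theorem statement, to giving a lax morphism of $2$-monads $\phi : T/\Sigma \to \{\gamma,\gamma\}_{\tn{l}}$ whose composites with $\tilde{a}_{\gamma}$ and $\tilde{b}_{\gamma}$ recover the specified lax $(T/\Sigma)$-algebra structures on $H$ and $K$, while the third projection out of the defining comma cone recovers the lax-algebra coherence $2$-cell $\psi$.

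First I would apply Definition \ref{defn:T-mod-Sigma} together with Proposition \ref{prop:ref-coidentifers-monoids} to convert $\phi$ into a lax morphism $\phi' : T \to \{\gamma,\gamma\}_{\tn{l}}$ whose underlying $2$-natural transformation post-composes with $\alpha_T$ to give an identity. Then, invoking the $2$-dimensional universal property of the comma square in $\tn{End}(\Cat/I)$ defining $\{\gamma,\gamma\}_{\tn{l}}$, this coidentifier condition on $\phi'$ reduces to the corresponding condition on $\tilde{a}_{\gamma}\phi'$ and $\tilde{b}_{\gamma}\phi'$. By Theorem \ref{thm:commutative-algebras-of-operads} this is equivalent to saying that the underlying lax $T$-algebra structures on $H$ and $K$ correspond to \emph{commutative} lax morphisms $T \to \CatAsOp$, which is precisely the hypothesis of the theorem.

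Next I would invoke Theorem \ref{thm:catalg-2cells-for-operads} to identify the resulting lax morphisms $\phi' : T \to \{\gamma,\gamma\}_{\tn{l}}$, satisfying the additional constraint above, with modifications of the form $(f,\overline{f}) \to (g,\overline{g})$ between the corresponding lax-natural transformations $H \to K$ of commutative operad morphisms into $\CatAsOp$. Composing these bijections gives the asserted correspondence in the lax case, and the colax case follows by the same argument after replacing $\{\gamma,\gamma\}_{\tn{l}}$ by $\{\gamma,\gamma\}_{\tn{c}}$ (i.e.\ by reversing the direction of the defining comma object).

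The only delicate point, and thus the main obstacle such as it is, is checking that the ``post-composition with $\alpha_T$ is an identity'' condition transports correctly across the comma-object universal property, so that it is equivalent to that same condition being satisfied separately by $\tilde{a}_{\gamma}\phi'$ and $\tilde{b}_{\gamma}\phi'$. This is a routine application of the $2$-dimensional universal property of comma (respectively isocomma or pullback) objects in the $2$-category $\tn{End}(\Cat/I)$, entirely analogous to the corresponding step in the proof of Theorem \ref{thm:comalg-morphisms-for-operads}, and so the argument requires no essentially new ideas beyond those already developed.
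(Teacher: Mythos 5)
Your proposal is correct and follows exactly the route the paper intends: the paper states that the proof "unfolds analogously to that of Theorem \ref{thm:comalg-morphisms-for-operads}", and your argument is precisely that analogue, replacing $\{f,f\}_{\tn{l}}$ by the algebra-2-cell classifier $\{\gamma,\gamma\}_{\tn{l}}$ (resp.\ $\{\gamma,\gamma\}_{\tn{c}}$), transporting the coidentifier condition across the comma-object universal property, and concluding via Theorem \ref{thm:catalg-2cells-for-operads}.
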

\noindent then unfolds analogously to that of Theorem \ref{thm:comalg-morphisms-for-operads}.

For each type of 2-category of algebra for 2-monads, a 2-monad morphism $T \to S$ induces a ``forgetful''  2-functor from the corresponding 2-category of algebras of $S$ to those of $T$. In the present situation of $q_T : T \to T/\Sigma$, Theorems \ref{thm:commutative-algebras-of-operads}-\ref{thm:comalg-2cells-for-operads} say that these correspond to the inclusions of the \emph{commutative} lax, pseudo or strict operad morphisms $T \to \CatAsOp$ amongst the general such morphisms. Regardless of which type of algebra 1-cells are considered, these inclusions are clearly 2-fully faithful.
\begin{rem}\label{rem:reconcile-TmodSigma}
At the beginning of this article we introduced the notation $T/\Sigma$ to denote the monad on $\Set/I$ arising from the operad $T$ via the standard construction of a monad from an operad. In this standard view the effect of $T/\Sigma$ on $X \to I$ in $\Set/I$ is given by the formula
\begin{equation}\label{eq:TmodSigma-standard-formula}
(T/\Sigma(X))_i = \coprod_{n \in \N} \left( \coprod_{i_1,...,i_n} T((i_j)_j;i) \times \prod_{j=1}^n X_{i_j} \right) / \Sigma_n
\end{equation}
interpretted as follows. The term in the bracket is acted on by $\Sigma_n$ by permuting the variables $(i_1,...,i_n)$, and then the notation $(-)/\Sigma_n$ is the standard notation for identifying the orbits of this $\Sigma_n$-action. By Remark \ref{rem:T-mod-Sigma-restricts-to-discrete-objects} $T/\Sigma$ given in Definition \ref{defn:T-mod-Sigma} restricts to a monad on $\Set/I$, and by Theorems \ref{thm:commutative-algebras-of-operads} and \ref{thm:comalg-morphisms-for-operads} its algebras coincide with the version of $T/\Sigma$ defined in the standard way via (\ref{eq:TmodSigma-standard-formula}). In this way these two uses of the notation $T/\Sigma$ are consistent. Thus the defining coidentifier of $T/\Sigma$ of Definition \ref{defn:T-mod-Sigma} is an alternative way of expressing the formula (\ref{eq:TmodSigma-standard-formula}).
\end{rem}
\begin{exam}\label{exam:Ass-commutative-algebras}
In \cite{Weber-PolynomialFunctors} the 2-monad $\MCMnd$ on $\Cat$ was exhibited as perhaps the most basic example of a polynomial 2-monad. As we saw in Example \ref{exam:Ass-vs-M} the commutative algebras for the operad $\Ass$ for monoids are exactly strict monoidal categories, and so $\Ass/\Sigma = \MCMnd$. By contrast, $\Com/\Sigma$ is not a cartesian monad (see \cite{Weber-Generic}) and thus not polynomial.
\end{exam}

\section{$\Sigma$-free operads}
\label{sec:sigma-free}

For any operad $T$ with set colours $I$, while corresponding 2-monad $T$ on $\Cat/I$ is a polynomial 2-monad, this is not always so for $T/\Sigma$ as we saw in Example \ref{exam:Ass-commutative-algebras}. In the main result of this section, Theorem \ref{thm:T-mod-Sigma-polynomial}, we recover the fact \cite{KockJ-PolyFunTrees, SzawielZawadowski-TheoriesOfAnalyticMonads} that when the operad $T$ is $\Sigma$-free, $T/\Sigma$ is polynomial, and establish that $q_T : T \to T/\Sigma$ is a polynomial monad morphism.

Let $T$ be a collection with object set $I$. The action of permutations on operations of $T$ provide, for any sequence $(i_j)_{1{\leq}j{\leq}n}$ of objects of $T$, $i \in I$, and any permutation $\rho \in \Sigma_n$, a bijection
\begin{equation}\label{eq:action-for-collection}
(-)\rho : T((i_{\rho j})_j;i) \to T((i_j)_j;i).
\end{equation}
It can happen that the sequences $(i_{\rho j})_j$ and $(i_j)_j$ are in fact equal, for instance when all the $i_j$'s are the same element of $I$. In such cases one can then ask whether $(-)\rho$ has any fixed points. 
\begin{defn}\label{defn:}
A collection $T$ is said to be \emph{$\Sigma$-free} when for all $(i_j)_{1{\leq}j{\leq}n}$, $i$ and $\rho$ as above such that $\rho \neq 1_n$ and $(i_{\rho j})_j = (i_j)_j$, the bijection (\ref{eq:action-for-collection}) has no fixed points. A \emph{$\Sigma$-free operad} is an operad whose underlying collection is $\Sigma$-free.
\end{defn}
We begin by characterising the $\Sigma$-freeness of a collection in various ways. Preliminary to this, it is useful to have various alternative characterisations of those categories equivalent to discrete categories. For any category $X$ we denote by $q_X : X \to \pi_0X$ the surjective-on-objects functor which sends $x \in X$ to its connected component. In other words, $q_X$ is the coidentifier of the 2-cell which arises from taking the cotensor of $X$ with $[1]$ (as in Example \ref{exam:pi0-as-coidentifier}), and moreover is the component at $X$ of the unit of the adjunction with left adjoint $\pi_0 : \Cat \to \Set$. Recall that a category is \emph{indiscrete} when it is equivalent to the terminal category $1$, or equivalently when it is non-empty and there is a unique morphism between any two objects. The straightforward proof of
\begin{lem}\label{lem:ess-discrete}
For $X \in \Cat$ the following statements are equivalent.
\begin{enumerate}
\item $X$ is equivalent to a discrete category.
\item $q_X : X \to \pi_0X$ is fully faithful.
\item $X$ is a groupoid and every morphism of $X$ is unique in its hom-set.\label{lemcase:essdisc-gpd-plus-uniqueness}
\item $X$ is a coproduct of indiscrete categories.\label{lemcase:essdisc-coproduct-indiscrete}
\end{enumerate}
\end{lem}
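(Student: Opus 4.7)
The plan is to prove the cyclic chain $(1) \Rightarrow (2) \Rightarrow (3) \Rightarrow (4) \Rightarrow (1)$. For $(1) \Rightarrow (2)$, given an equivalence $e : X \to D$ with $D$ discrete, $e$ factors uniquely through $q_X$ because any functor into a discrete category factors through connected components. Thus $e = \overline{e} \circ q_X$ where $\overline{e} : \pi_0 X \to D$ is a bijection of discrete categories, hence an isomorphism and in particular fully faithful; since $e$ is also fully faithful, so is $q_X$.

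For $(2) \Rightarrow (3)$, full faithfulness of $q_X$ gives a bijection $X(x,y) \cong (\pi_0 X)(q_X x, q_X y)$ for each pair of objects, and since $\pi_0 X$ is discrete the right-hand side is either empty or a singleton. So $X$ has at most one morphism between any two objects; given any $f : x \to y$ in $X$, the unique morphism $g : y \to x$ in the (then necessarily non-empty) hom $X(y,x)$ must satisfy $gf = 1_x$ and $fg = 1_y$ by uniqueness of morphisms in the endomorphism homs, exhibiting $f$ as an isomorphism and making $X$ a groupoid. For $(3) \Rightarrow (4)$, decompose $X = \coprod_{c \in \pi_0 X} X_c$ as the coproduct of its connected components; each $X_c$ is non-empty, and any two of its objects are related by a zigzag which, by invertibility of morphisms, collapses to a single morphism, which is unique by hypothesis, so $X_c$ is indiscrete.

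For $(4) \Rightarrow (1)$, any indiscrete category is equivalent to the terminal category $1$, so a coproduct $\coprod_i X_i$ of indiscrete categories is equivalent to the discrete category $\coprod_i 1$. The only step requiring mild care is $(2) \Rightarrow (3)$, where one must simultaneously extract both uniqueness of morphisms in each hom and invertibility of every morphism from the single hypothesis of full faithfulness of $q_X$; combining the bijection on homs with the discreteness of $\pi_0 X$ handles both at once. The remaining implications are elementary 1-category theory and should present no difficulty.
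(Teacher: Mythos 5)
Your proof is correct; the paper explicitly leaves this lemma to the reader, so there is no argument in the text to compare against, and your cyclic chain $(1)\Rightarrow(2)\Rightarrow(3)\Rightarrow(4)\Rightarrow(1)$ is a perfectly good way to organise it. The only point where you assert slightly more than you justify is in $(1)\Rightarrow(2)$: the claim that $\overline{e}:\pi_0X\to D$ is a bijection needs a word --- surjectivity comes from essential surjectivity of $e$ into a discrete category, and injectivity from full faithfulness of $e$ (two objects with the same image acquire a morphism between them, hence lie in the same component) --- and in fact only injectivity of $\overline{e}$ (equivalently, its full faithfulness as a functor of discrete categories) is needed to conclude that $q_X$ is fully faithful. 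Everything else, including the careful extraction of both uniqueness and invertibility from $(2)$, checks out.
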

\noindent is left to the reader. Categories equivalent to discrete categories are closed under various 2-categorical constructions relevant for us.
\begin{lem}\label{lem:ess-disc-inheritance}
\begin{enumerate}
\item If $p : E \to B$ is a discrete fibration and $B$ is equivalent to a discrete category, then so is $E$.\label{lemcase:essdisc-inherited-along-dfib}
\item In a pullback as on the left
\[ \xygraph{{\xybox{\xygraph{{P}="p0" [r] {B}="p1" [d] {C}="p2" [l] {A}="p3" "p0":"p1"^-{}:"p2"^-{}:@{<-}"p3"^-{}:@{<-}"p0"^-{}:@{}"p2"|-{\tn{pb}}}}}
[r(4)]
{\xybox{\xygraph{{P}="p0" [r] {A}="p1" [r] {B}="p2" [d] {C}="p3" [l(2)] {Q}="p4" "p0":"p1"^-{}:"p2"^-{g}:"p3"^-{f}:@{<-}"p4"^-{}:@{<-}"p0"^-{}:@{}"p3"|-{\tn{dpb}}}}}} \]
in $\Cat$, if $A$, $B$ and $C$ are equivalent to discrete categories, then so is $P$.\label{lemcase:essdisc-inherited-by-pb}
\item In a distributivity pullback around $(f,g)$ in $\Cat$ as on the right in the previous display in which $f$ is a discrete fibration, if $A$, $B$ and $C$ are equivalent to discrete categories, then so are $P$ and $Q$.\label{lemcase:essdisc-inherited-by-dpb}
\item If in
\[ \xygraph{!{0;(1.5,0):(0,1)::} {A}="p0" [r] {B}="p1" [r] {Q}="p2"
"p0":@<1.5ex>"p1"|(.45){}="t"^-{f} "p0":@<-1.5ex>"p1"|(.45){}="b"_-{g}:"p2"^-{q} "t":@{}"b"|(.15){}="d"|(.85){}="c" "d":@{=>}"c"^-{\phi}} \]
$q$ is a reflexive coidentifier of $\phi$ and $B$ is equivalent to a discrete category, then so is $Q$, and $q$ is surjective on objects and arrows.\label{lemcase:essdisc-inherited-by-coidentifier}
\end{enumerate}
\end{lem}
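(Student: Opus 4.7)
My plan is to prove each clause by reducing to the characterisations provided by Lemma~\ref{lem:ess-discrete}, principally the description of an essentially discrete category as a groupoid with at-most-singleton hom-sets (part~\ref{lemcase:essdisc-gpd-plus-uniqueness}) and as a coproduct of indiscrete categories (part~\ref{lemcase:essdisc-coproduct-indiscrete}).

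For clause~\ref{lemcase:essdisc-inherited-along-dfib}, decompose $B \simeq \coprod_i B_i$ into its indiscrete components. Since a discrete fibration is pullback-stable and splits as a coproduct over a coproduct of its base, $E = \coprod_i p^{-1}(B_i)$, so it suffices to handle the case in which $B$ is indiscrete. Then $p$ is classified by a functor $B^{\op} \to \Set$ which, because $B$ is a connected groupoid, sends every morphism to a bijection; hence $E$ is a disjoint union of copies of $B$ indexed by any chosen fibre of $p$, and so is a coproduct of indiscretes. For clause~\ref{lemcase:essdisc-inherited-by-pb} I invoke characterisation~\ref{lemcase:essdisc-gpd-plus-uniqueness}: every hom-set of $P$ is a subset of a product of hom-sets of $A$ and $B$ and so has at most one element, and $P$ inherits the groupoid structure from $A$ and $B$.

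For clause~\ref{lemcase:essdisc-inherited-by-dpb}, note first that $P$ is exactly the pullback of $f : B \to C$ along $r : Q \to C$ by the definition of a pullback around $(f,g)$, so once $Q$ is shown essentially discrete, clause~\ref{lemcase:essdisc-inherited-by-pb} handles $P$. For $Q$ I derive the explicit description parallel to Example~\ref{exam:polyfunctor-Cat-middle-dopfib} but in the case where $f$ is a discrete fibration: an object of $Q$ is a pair $(c,h)$ with $c \in C$ and $h$ a section of $g$ over the fibre $f^{-1}\{c\}$, and a morphism $(c_1,h_1) \to (c_2,h_2)$ is a morphism $\gamma : c_1 \to c_2$ in $C$ together with, for each $b_2 \in f^{-1}\{c_2\}$, a lift in $A$ of the unique $\tilde{\gamma}_{b_2} : \tilde{f}(\gamma)(b_2) \to b_2$ provided by the discrete fibration structure of $f$. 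Uniqueness of $\gamma$ comes from $C$ being essentially discrete, uniqueness of each fibrewise lift from $A$ being so, and $Q$ inherits the groupoid property from $A$, $B$ and $C$; hence $Q$ satisfies Lemma~\ref{lem:ess-discrete}(\ref{lemcase:essdisc-gpd-plus-uniqueness}).

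Clause~\ref{lemcase:essdisc-inherited-by-coidentifier}, which I expect to be the main obstacle, requires a direct analysis of the reflexive coidentifier in $\Cat$. Writing $B \simeq \coprod_i B_i$ with each $B_i$ indiscrete, the key observation is that for each $a \in A$ the component $\phi_a : f(a) \to g(a)$ is an actual morphism of $B$, so $f(a)$ and $g(a)$ necessarily lie in the same indiscrete summand of $B$. Consequently the identifications of objects imposed by the coidentifier happen within each $B_i$, and a quotient of an indiscrete category remains indiscrete; so $Q$ is again a coproduct of indiscrete categories and is essentially discrete by Lemma~\ref{lem:ess-discrete}(\ref{lemcase:essdisc-coproduct-indiscrete}). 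Surjectivity of $q$ on objects is automatic for any coidentifier in $\Cat$; surjectivity on arrows follows because any morphism of $Q$ lies in a single indiscrete component and is therefore the unique image of the corresponding morphism in $B$ connecting representatives of its endpoints. The delicate point is verifying that the relations imposed by setting each $\phi_a$ to an identity do not collapse further structure beyond what is described; here one uses that reflexivity of $\phi$ makes the coidentifier a sifted colimit in $\Cat$ computed by this bare quotient.
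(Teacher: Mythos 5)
Your treatments of clauses~(\ref{lemcase:essdisc-inherited-along-dfib})--(\ref{lemcase:essdisc-inherited-by-dpb}) are correct, and in fact more detailed than the paper's (which simply says these are verified directly from Lemma~\ref{lem:ess-discrete}(\ref{lemcase:essdisc-gpd-plus-uniqueness}) and the explicit description of the distributivity pullback). Your dualisation of the $\tn{Fam}$-description to the case where $f$ is a discrete fibration rather than an opfibration is the right move for clause~(\ref{lemcase:essdisc-inherited-by-dpb}), and the ``at most one morphism in each hom-set, and the groupoid property is inherited componentwise'' argument goes through in each case.

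Clause~(\ref{lemcase:essdisc-inherited-by-coidentifier}) is where the real content lies, and your argument has a genuine gap at exactly the step you flag as ``the delicate point.'' After reducing to $B = \tn{ch}(X)$ indiscrete, the assertion that ``a quotient of an indiscrete category remains indiscrete'' is precisely what must be proved: you must show that $\tn{ch}(\tn{coeq}(f_0,g_0))$ actually satisfies the universal property of the coidentifier, i.e.\ that every $h : \tn{ch}(X) \to C$ with $h\phi = \id$ factors through $\tn{ch}(q)$ for the coequaliser $q$ of the object maps. The object map of the factorisation is given by the coequaliser, but the arrow map must be \emph{defined} by $h'(y_1,y_2) := h(x_1,x_2)$ for chosen representatives, and one must check this is independent of the representatives --- that is, $h(x_1,x_2) = h(x'_1,x'_2)$ whenever $qx_1 = qx'_1$ and $qx_2 = qx'_2$. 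This is where $h\phi = \id$ is actually used: one reduces (using that $f_0$ and $g_0$ have a common section, so that the kernel relation of $q \times q$ is generated by single-coordinate moves $(x_1,x_2) \sim (x'_1,x_2)$ with $fa = x_1$, $ga = x'_1$) to an elementary move, and then applies $h$ to the commuting triangle in $\tn{ch}(X)$ with edges $\phi_a : x_1 \to x'_1$, $(x'_1,x_2)$ and $(x_1,x_2)$, obtaining $h(x_1,x_2) = h(x'_1,x_2)\cdot h(\phi_a) = h(x'_1,x_2)$. Your appeal to siftedness cannot substitute for this: siftedness of reflexive coidentifiers says they commute with finite products, not that they are computed by the ``bare quotient,'' and it is itself established only after one knows how such colimits are computed. (Your remaining claims are fine once this is in place: surjectivity of $q$ on objects holds for any coidentifier since $\tn{ob} \dashv \tn{ch}$ forces $\tn{ob}$ of the coidentifier to be the coequaliser of the object maps, and surjectivity on arrows then follows from indiscreteness of each component of $Q$.)
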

\begin{proof}
Using Lemma \ref{lem:ess-discrete}(\ref{lemcase:essdisc-gpd-plus-uniqueness}), together with Lemma \ref{lem:Cat-dpb-along-dopfib-fam} in the case of (\ref{lemcase:essdisc-inherited-by-dpb}), one easily verifies (\ref{lemcase:essdisc-inherited-along-dfib})-(\ref{lemcase:essdisc-inherited-by-dpb}) directly. In the case of (\ref{lemcase:essdisc-inherited-by-coidentifier}) by Lemma \ref{lem:ess-discrete}(\ref{lemcase:essdisc-coproduct-indiscrete}) the coidentifier diagram decomposes as
\[ \xygraph{!{0;(2.5,0):(0,1)::} {\coprod_iA_i}="p0" [r] {\coprod_iB_i}="p1" [r] {\coprod_iQ_i}="p2"
"p0":@<1.5ex>"p1"|(.45){}="t"^-{\coprod_if_i} "p0":@<-1.5ex>"p1"|(.45){}="b"_-{\coprod_ig_i}:"p2"^-{\coprod_iq_i} "t":@{}"b"|(.15){}="d"|(.85){}="c" "d":@{=>}"c"^-{\coprod_i\phi_i}} \]
in which each summand is a reflexive coidentifier diagram, and $B_i$ is indiscrete. Thus by Lemma \ref{lem:ess-discrete}(\ref{lemcase:essdisc-coproduct-indiscrete}) it suffices to consider the case where $B$ is indiscrete. Recall that the functor $(-)_0 : \Cat \to \Set$ which sends every category to its set of objects, has a right adjoint section $\tn{ch} : \Set \to \Cat$, which sends every set $X$ to the category whose objects are the elements of $X$, and where there is a unique arrow between any two objects. Given $x_1$ and $x_2 \in X$, we denote the unique arrow $x_1 \to x_2$ in $\tn{ch}(X)$ simply as $(x_1,x_2)$. When $B$ is indiscrete one may regard it as $\tn{ch}(X)$ for some non-empty set $X$. It suffices to show that in this case the coidentifier is computed as on the left
\[ \xygraph{*=(0,0)!(0,.8){\xybox{\xygraph{{\xybox{\xygraph{!{0;(1.5,0):(0,1)::} {A}="p0" [r] {\tn{ch}(X)}="p1" [r] {\tn{ch}(Q)}="p2"
"p0":@<1.5ex>"p1"|(.4){}="t"^-{f} "p0":@<-1.5ex>"p1"|(.4){}="b"_-{g}:"p2"^-{\tn{ch}(q)} "t":@{}"b"|(.15){}="d"|(.85){}="c" "d":@{=>}"c"^-{\phi}}}}
[r(4.5)]
{\xybox{\xygraph{!{0;(1.5,0):(0,1)::} {A_0}="p0" [r] {X}="p1" [r] {Q}="p2"
"p0":@<1ex>"p1"|(.45){}="t"^-{f_0} "p0":@<-1ex>"p1"|(.45){}="b"_-{g_0}:"p2"^-{q}}}}}}}} \]
where the diagram on the right is a coequaliser in $\Set$. Let $h : \tn{ch}(X) \to C$ be such that $h\phi = \id$. Since $\tn{ch}(q)$ is clearly surjective on objects and arrows and thus an epimorphism in $\Cat$, it suffices to show that there exists $h' : \tn{ch}(Q) \to C$ such that $h = h'\tn{ch}(q)$. The object map $h'_0$ is unique such that $h_0 = h'_0q$ by the coequaliser in $\Set$. Given $y_1$ and $y_2 \in Q$ we must give $h'(y_1,y_2) : h'y_1 \to h'y_2$ in $C$, and this is done by choosing $x_1$ and $x_2$ in $X$ such that $qx_1 = y_1$ and $qx_2 = y_2$, and then defining $h'(y_1,y_2) = h(x_1,x_2)$. The functoriality of $h'$ is immediate from that of $h$ as long as $h'$'s arrow map is well-defined.

To establish this well-definedness we must show that if $qx_1 = qx'_1$ and $qx_2 = qx'_2$, then $h(x_1,x_2) = h(x'_1,x'_2)$. Since $f_0$ and $g_0$ have a common section given by the object map of the 1-section of $\phi$, the equivalence relation on $X \times X$ defined by $(x_1,x_2) \sim (x'_1,x'_2)$ iff $qx_1 = qx'_1$ and $qx_2 = qx'_2$ is the smallest equivalence relation which contains
\[ \begin{array}{c}
{\{((x_1,x_2),(x'_1,x_2)) \, : \, \exists a, \, fa = x_1 \, \tn{and} \, ga = x'_1\}} \\
{\cup} \\
{\{((x_1,x_2),(x_1,x'_2)) \, : \, \exists a, \, fa = x_2 \, \tn{and} \, ga = x'_2\}.}
\end{array} \]
Thus it suffices to show
\begin{itemize}
\item If $x_1$, $x'_1$ and $x_2 \in X$ such that $\exists \, a$, $fa = x_1$ and $ga = x'_1$, then $h(x_1,x_2) = h(x'_1,x_2)$.
\item If $x_1$, $x_2$ and $x'_2 \in X$ such that $\exists \, a$, $fa = x_2$ and $ga = x'_2$, then $h(x_1,x_2) = h(x_1,x'_2)$.
\end{itemize}
In the first of these situations note that one has a triangle as on the left
\[ \xygraph{{\xybox{\xygraph{{x_1}="p0" [r(2)] {x'_1}="p1" [dl] {x_2}="p2" "p0":"p1"^-{\phi_a}:"p2"^-{(x'_1,x_2)}:@{<-}"p0"^-{(x_1,x_2)}}}}
[r(4)]
{\xybox{\xygraph{{x_1}="p0" [dr] {x'_2}="p1" [l(2)] {x_2}="p2" "p0":"p1"^-{(x_1,x'_2)}:@{<-}"p2"^-{\phi_a}:@{<-}"p0"^-{(x_1,x_2)}}}}} \]
in $\tn{ch}(X)$ sent by $h$ to the desired equality since $h\phi_a = \id$, and simlarly one applies $h$ to the triangle on the right for the other situation.
\end{proof}
Recall from Section \ref{sec:SMultiCats-Poly} that
\[ \xygraph{{I}="p0" [r] {E_T}="p1" [r] {B_T}="p2" [r] {I}="p3" "p0":@{<-}"p1"^-{s_T}:"p2"^-{p_T}:"p3"^-{t_T}} \]
denotes the polynomial corresponding to a collection $T$ with object set $I$.
\begin{prop}\label{prop:Sigma-freeness-poly}
For a collection $T$ with object set $I$ the following statements are equivalent:
\begin{enumerate}
\item $T$ is $\Sigma$-free.\label{pcase:sig-free}
\item $B_T$ is equivalent to a discrete category.\label{pcase:BT-ess-disc}
\item The naturality square
\[ \xygraph{!{0;(1.5,0):(0,.6667)::} {E_T}="p0" [r] {B_T}="p1" [d] {\pi_0B_T}="p2" [l] {\pi_0E_T}="p3" "p0":"p1"^-{p_T}:"p2"^-{q_{B_T}}:@{<-}"p3"^-{\pi_0p_T}:@{<-}"p0"^-{q_{E_T}}} \]
is a pullback.\label{pcase:q-cart-at-pT}
\end{enumerate}
\end{prop}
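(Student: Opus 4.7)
The plan is to establish the cyclic implications $(1) \Rightarrow (2) \Rightarrow (3) \Rightarrow (1)$. The key preliminary observation is that $B_T$ is always a groupoid: the inverse of $\rho : \beta\rho \to \beta$ is $\rho^{-1} : \beta \to \beta\rho$, since $\beta = (\beta\rho)\rho^{-1}$ by functoriality of the action. Moreover the endomorphism group of $\beta \in B_T$ is exactly the stabilizer $G_\beta = \{\sigma \in \Sigma_n : \beta\sigma = \beta\}$ of $\beta$ under the right $\Sigma_n$-action, and each non-empty hom-set $B_T(\alpha,\beta)$ is a $G_\beta$-torsor (where $n$ is the arity of $\beta$). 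In these terms, $\Sigma$-freeness of $T$ is equivalent to the assertion that $G_\beta = \{1\}$ for every arrow $\beta$ of $T$.

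For $(1) \Leftrightarrow (2)$, I would invoke Lemma \ref{lem:ess-discrete}(\ref{lemcase:essdisc-gpd-plus-uniqueness}): since $B_T$ is a groupoid automatically, its essential discreteness amounts to having at most one morphism between any two of its objects. The torsor description shows this uniqueness is equivalent to $G_\beta = \{1\}$ for every $\beta$, i.e.\ to $\Sigma$-freeness.

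For $(2) \Rightarrow (3)$, we already have $\Sigma$-freeness from the previous step, and Lemma \ref{lem:ess-disc-inheritance}(\ref{lemcase:essdisc-inherited-along-dfib}) applied to the discrete fibration $p_T$ yields that $E_T$ too is essentially discrete. One then verifies that the comparison functor $c : E_T \to B_T \times_{\pi_0 B_T} \pi_0 E_T$ into the pullback is an isomorphism. Surjectivity on objects combines the essential surjectivity of $q_{B_T}$ with the unique lifting property of $p_T$ as a discrete fibration: given $(\beta,e)$ in the pullback and any $(\alpha,j)$ in the component $e$, an isomorphism $\alpha \to \beta$ in $B_T$ lifts uniquely to $E_T$. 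Injectivity on objects uses triviality of the $G_\beta$'s: if $(\beta,j_1)$ and $(\beta,j_2)$ lie in the same component of $E_T$, any connecting morphism has underlying permutation in $G_\beta = \{1\}$, forcing $j_1 = j_2$. Bijectivity on hom-sets is then automatic, since hom-sets in both $E_T$ and the pullback have cardinality at most one (the latter being $G_\beta$-torsors restricted to a fixed component).

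For $(3) \Rightarrow (1)$, given any $\sigma \in G_\beta$, the element $\sigma$ defines a morphism $(\beta,j) \to (\beta,\sigma j)$ in $E_T$ for every $j$, placing these two objects in the same component of $E_T$ and both in the fibre of $p_T$ over $\beta$. The assumed pullback condition then forces $(\beta,j) = (\beta,\sigma j)$, whence $\sigma j = j$ for all $j$ and therefore $\sigma = 1$. The only real work lies in $(2) \Rightarrow (3)$, the main obstacle being the bookkeeping needed to unpack the pullback condition in terms of the explicit descriptions of $E_T$ and $B_T$ recalled in Construction \ref{const:collection-PolyEndo}; the other two implications reduce immediately to the torsor description once the groupoid structure of $B_T$ is recognised.
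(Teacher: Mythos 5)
Your proof is correct and follows essentially the same route as the paper's: (1)$\Rightarrow$(2) via uniqueness of morphisms in the hom-sets of the groupoid $B_T$, (2)$\Rightarrow$(3) by checking the comparison into the pullback on objects and arrows using essential discreteness of $E_T$ and the lifting property of $p_T$, and (3)$\Rightarrow$(1) via joint monicity of $(p_T,q_{E_T})$ on the objects $(\beta,j)$ and $(\beta,\sigma j)$ — your stabilizer/torsor packaging is a tidy reformulation rather than a different argument. The only point stated a little too quickly is that bijectivity on hom-sets is not purely "automatic" from both sides having hom-sets of size at most one: you still need fullness, which follows because two objects of $E_T$ lying over objects of the pullback connected by a morphism are in the same component of the groupoid $E_T$ and hence themselves connected by a (unique) morphism.
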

\begin{proof}
(\ref{pcase:sig-free})$\implies$(\ref{pcase:BT-ess-disc}): By Lemma \ref{lem:ess-discrete} it suffices to show that when $T$ is $\Sigma$-free, that every morphism of $B_T$ is unique in its hom-set. Recall that a morphism of $B_T$ is of the form $\rho : \alpha\rho \to \alpha$, where $\alpha : (i_j)_{1{\leq}j{\leq}n} \to i$ is in $T$ and $\rho \in \Sigma_n$. Thus to give a pair of morphisms with the same domain and codomain, is to give $\alpha$ as above, $\rho_1$ and $\rho_2 \in \Sigma_n$ such that $\alpha\rho_1 = \alpha\rho_2$. Thus $\alpha = \alpha(\rho_2\rho_1^{-1})$, and so $\Sigma$-freeness implies $\rho_2\rho_1^{-1} = 1_n$, whence $\rho_1 = \rho_2$.

(\ref{pcase:BT-ess-disc})$\implies$(\ref{pcase:q-cart-at-pT}): By Lemma \ref{lem:ess-disc-inheritance} $E_T$ is also equivalent to a discrete category, and so by Lemma \ref{lem:ess-discrete} $q_{E_T}$ and $q_{B_T}$ are surjective-on-objects equivalences. We check that the naturality square is a pullback on objects. Let $b \in B_T$ and $c \in \pi_0E_T$ such that $q_{B_T}b = \pi_0p_Tc$. Choose $e \in E_T$ such that $q_{E_T}e = c$. Since $p_Te$ and $b$ are in the same connected component, there is a unique isomorphism $p_Te \iso b$, and one has a unique lifting of this to $e \iso e'$ in $E_T$. Thus $e'$ is an object of $E_T$ such that $p_Te' = b$ and $q_{E_T}e'=c$. To see that it is unique, suppose that one has $e_1$ and $e_2$ in $E_T$ such that $q_{E_T}e_1 = q_{E_T}e_2$ and $p_Te_1 = p_Te_2$. By the first of these equations one has a unique isomorphism $e_1 \iso e_2$, and this is sent to an identity by $p_T$. As a discrete fibration $p_T$ reflects identities, and so the isomorphism $e_1 \iso e_2$ is an identity. To say that the naturality square is a pullback on arrows, is to say that given $\beta : b_1 \iso b_2$ in $B_T$ and $c \in \pi_0E_T$ such that $\pi_0p_T1_c = q_{B_T}\beta$, then there is a unique isomorphism $\varepsilon:e_1 \iso e_2$ in $E_T$ such that $q_T\varepsilon = 1_c$ and $p_T\varepsilon = \beta$. But $e_1$ and $e_2$ are determined uniquely since the square is a pullback on objects, and $\varepsilon$ is determined uniquely since $E_T$ and $B_T$ are equivalent to discrete categories.

(\ref{pcase:q-cart-at-pT})$\implies$(\ref{pcase:sig-free}): We prove the contrapositive. Suppose $T$ is not $\Sigma$-free. Then one has $\alpha : (i_j)_{1{\leq}j{\leq}n} \to i$ in $T$ and $1_n \neq \rho \in \Sigma_n$ such that $\alpha\rho = \alpha$. Choose $1 \leq k < l \leq n$ such that $\rho k = l$, for instance by letting $k$ be the least such that $\rho k \neq k$. Then $(\alpha,k)$ and $(\alpha,l)$ are distinct objects of $E_T$, which are in the same connected component since one has $\rho : (\alpha,k) \to (\alpha,l)$, and one has $p_T(\alpha,k) = \alpha = p_T(\alpha,l)$, and so $q_{E_T}$ and $p_T$ are not jointly monic.
\end{proof}
Thanks to this last result, for a $\Sigma$-free collection $T$ one has the 1-cell $(q_{B_T},q_{E_T})$
\begin{equation}\label{diag:local-coidentifier-in-Poly}
\begin{gathered}
\xygraph{!{0;(1.5,0):(0,1)::} {I}="p0" [ur] {E^{[1]}_T}="p1" [r] {B^{[1]}_T}="p2" [dr] {I}="p3" [l] {B_T}="p4" [l] {E_T}="p5" [d(.75)] {\pi_0E_T}="p6" [r] {\pi_0B_T}="p7" "p0":@{<-}"p1"^-{s^{[1]}_T}:"p2"^-{p^{[1]}_T}:"p3"^-{t^{[1]}_T}:@{<-}"p4"^-{t_T}:@{<-}"p5"^-{p_T}:"p0"^-{s_T}
"p1":@/_{1pc}/"p5"_(.5){}|(.5){}="dal" "p2":@/_{1pc}/"p4"_(.5){}|(.5){}="dar"
"p1":@/^{1pc}/"p5"^(.5){}|(.5){}="cal" "p2":@/^{1pc}/"p4"^(.5){}|(.5){}="car"
"dal":@{}"cal"|(.3){}="dl"|(.7){}="cl" "dl":@{=>}"cl"^-{\alpha_{E_T}}
"dar":@{}"car"|(.3){}="dr"|(.7){}="cr" "dr":@{=>}"cr"^-{\alpha_{B_T}}
"p0":@{<-}"p6"_-{\pi_0s_T}:"p7"_-{\pi_0p_T}:"p3"_-{\pi_0t_T}
"p5":"p6"^-{q_{E_T}} "p4":"p7"^-{q_{B_T}}}
\end{gathered}
\end{equation}
in $\Polyc{\Cat}(I,I)$ whose composite with $(\alpha_{B_T},\alpha_{E_T})$ is an identity. Composition of polynomials makes $\Polyc{\Cat}(I,I)$ a monoidal 2-category, and the main technical result of this section is
\begin{lem}\label{lem:monoidally-stable-local-coidentifier}
$(q_{B_T},q_{E_T})$ is the coidentifier of $(\alpha_{B_T},\alpha_{E_T})$ in $\Polyc{\Cat}(I,I)$. This coidentifier is monoidally stable and preserved by
\[ (\PFun{\Cat})_{I,I} : \Polyc{\Cat}(I,I) \longrightarrow \tn{End}(\Cat/I). \]
\end{lem}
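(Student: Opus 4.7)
The plan is to verify the three assertions of the lemma in sequence, each by reducing to componentwise arguments in $\Cat$ via the structural results already established.

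First, for the coidentifier property in $\Polyc{\Cat}(I,I)$: a 2-cell between polynomials in this hom 2-category consists of a pair of compatible component functors between the respective middle and source categories, satisfying the pullback condition; a 2-cell between such 2-cells is a pair of natural transformations making the induced cones and the cylinder over $I$ commute. Hence the universal property of the coidentifier reduces to producing consistent component factorisations. By Proposition \ref{prop:Sigma-freeness-poly}(\ref{pcase:BT-ess-disc}) the $\Sigma$-freeness of $T$ gives that $B_T$ is essentially discrete, and then by Lemma \ref{lem:ess-disc-inheritance}(\ref{lemcase:essdisc-inherited-along-dfib}) so is $E_T$. Applying Lemma \ref{lem:ess-disc-inheritance}(\ref{lemcase:essdisc-inherited-by-coidentifier}) twice then identifies $q_{B_T}$ and $q_{E_T}$ as the reflexive coidentifiers of $\alpha_{B_T}$ and $\alpha_{E_T}$ in $\Cat$, each surjective on objects and morphisms. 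The pullback condition required of the induced polynomial 1-cell is exactly the content of Proposition \ref{prop:Sigma-freeness-poly}(\ref{pcase:q-cart-at-pT}), and the 2-dimensional universal property is then automatic from the surjectivity, hence epimorphy, of $q_{B_T}$ and $q_{E_T}$ in $\Cat$.

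Second, for monoidal stability: it must be shown that composing $(\alpha_{B_T}, \alpha_{E_T})$ with arbitrary polynomials $P, Q : I \to I$ on the left and right still admits $P \comp (q_{B_T}, q_{E_T}) \comp Q$ as its coidentifier. Composition of polynomials is built componentwise from one pullback and one distributivity pullback, with the latter guaranteed to exist because $p_T$ and $p_T^{[1]}$ are both discrete fibrations and discrete opfibrations (as used just before Lemma \ref{lem:arrowcat-dpbs}). Thanks to Lemma \ref{lem:ess-disc-inheritance}(\ref{lemcase:essdisc-inherited-by-pb}) and (\ref{lemcase:essdisc-inherited-by-dpb}), essential discreteness of the middle and source components is preserved by these two constructions when composing with $T^{[1]}_{\Sigma}$ and $T$ respectively, and the cartesianness property of $q$ at $p_T$ from Proposition \ref{prop:Sigma-freeness-poly}(\ref{pcase:q-cart-at-pT}) persists to the analogous map for the composite polynomial by naturality. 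Hence the argument of the first paragraph applies verbatim to the composite diagram, producing the desired coidentifier and establishing monoidal stability.

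Third, for preservation by $\Phi = (\PFun{\Cat})_{I,I}$: coidentifiers in $\tn{End}(\Cat/I)$ are computed pointwise, so it suffices to show that for each $X \in \Cat/I$ the pointwise diagram $\Phi(T^{[1]}_{\Sigma})X \rightrightarrows \Phi(T)X \to \Phi(T/\Sigma)X$ is a coidentifier in $\Cat$. The 2-cell $\alpha_T$ is reflexive by Remark \ref{rem:for-def-T1-mod-sigma}, and each polynomial 2-functor preserves sifted colimits by Theorem 4.5.1 of the prequel; hence reflexive coidentifiers are preserved pointwise by composition with polynomials, which combined with the monoidal stability established above shows that $\Phi$ takes the polynomial coidentifier to a pointwise reflexive coidentifier in $\tn{End}(\Cat/I)$. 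Matching this against the pointwise coidentifier defining $T/\Sigma$ in Definition \ref{defn:T-mod-Sigma} yields the claim. I expect the main obstacle to be the clean execution of this final pointwise identification: one must carefully unpack the explicit description of polynomial functors from Example \ref{exam:polyfunctor-Cat-middle-dopfib} for both $(s_T, p_T, t_T)$ and $(\pi_0 s_T, \pi_0 p_T, \pi_0 t_T)$, and verify that the $(q_{B_T}, q_{E_T})$-induced comparison is precisely the pointwise coidentifier, invoking Lemma \ref{lem:Cat-dpb-along-dopfib-fam} to evaluate the distributivity pullbacks explicitly.
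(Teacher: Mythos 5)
Your high-level plan --- reduce everything to componentwise statements in $\Cat$ via essential discreteness of $B_T$ and $E_T$, then bootstrap to composites --- is the same as the paper's (which packages it as Lemmas \ref{lem:composition-DI} and \ref{lem:general-coidentifier-stability} about the sub-2-category $\mathfrak{E}_I$), but the proposal passes over the two points where the actual work lies. First, the 1-dimensional universal property in $\Polyc{\Cat}(I,I)$ is not ``exactly the content'' of Proposition \ref{prop:Sigma-freeness-poly}(\ref{pcase:q-cart-at-pT}). That item says the square formed by $p_T$, $\pi_0 p_T$, $q_{B_T}$, $q_{E_T}$ is a pullback, which only makes the cocone leg $(q_{B_T},q_{E_T})$ a well-formed 1-cell of $\Polyc{\Cat}(I,I)$. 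The universal property demands something else: for an \emph{arbitrary} coidentifying cocone with components $r_1 : B_T \to B'$ and $r_2 : E_T \to E'$ forming a pullback over $p' : E' \to B'$, the induced functors $s_1 : \pi_0 B_T \to B'$ and $s_2 : \pi_0 E_T \to E'$ must again form a pullback against $\pi_0 p_T$ and $p'$. Knowing that the outer square and the $q$-square are pullbacks does not yield this --- pullback cancellation runs in the other direction --- and the paper's Lemma \ref{lem:general-coidentifier-stability} has to prove it by a direct element chase, using surjectivity of $q_{B_T}$ on objects, joint monicity of $(p_T, r_2)$ for uniqueness, and essential discreteness of the $E$'s for the arrow part. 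Some such argument must be supplied.

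Second, your stability and preservation arguments only really cover post-composition. For $P \in \mathfrak{E}_I$ the $B$-component of $P \comp (-)$ applied to the diagram is $\PFun{\Cat}(P)$ applied to the diagram of $B$'s, so sifted-colimit preservation of polynomial 2-functors does apply there. But for pre-composition $(-)\comp P$ the $B$-component of the composite is the evaluation of the three \emph{varying} polynomial functors at the fixed object $B_P \to I$, and sifted-colimit preservation of each individual functor in its argument says nothing about whether evaluation at a fixed object preserves the coidentifier; this is also precisely what must be resolved to get preservation by $(\PFun{\Cat})_{I,I}$, since coidentifiers in $\tn{End}(\Cat/I)$ are pointwise. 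The paper's resolution has no counterpart in your proposal: writing $S$ for the coidentifying endofunctor, the evaluation of the whole diagram at $X$ is the pullback along $S(!) : SX \to S1$ of its evaluation at $1$ (using that the induced 2-natural transformations are cartesian), and $S(!)$ is exponentiable because $S$ is familial and opfamilial, so $\Delta_{S(!)}$ preserves the coidentifier. Without this step, or a substitute for it, neither monoidal stability nor the claim about $(\PFun{\Cat})_{I,I}$ is established.
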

\noindent whose proof we defer to Appendix \ref{sec:proof-lem-SigFree-charn}.
\begin{thm}\label{thm:T-mod-Sigma-polynomial}
Let $T$ be a $\Sigma$-free operad with object set $I$. Then
\[ \xygraph{!{0;(1.5,0):(0,.5)::} {I}="p0" [ur] {E_T}="p1" [r] {B_T}="p2" [dr] {I}="p3" [dl] {\pi_0B_T}="p4" [l] {\pi_0E_T}="p5" "p0":@{<-}"p1"^-{s_T}:"p2"^-{p_T}:"p3"^-{t_T}:@{<-}"p4"^-{\pi_0t_T}:@{<-}"p5"^-{\pi_0p_T}:"p0"^-{\pi_0s_T}
"p1":"p5"_-{q_{E_T}} "p2":"p4"^-{q_{B_T}} "p1":@{}"p4"|-{\tn{pb}}} \]
is a morphism of monads in $\Polyc{\Cat}$, which $\PFun{\Cat}$ sends to $q_T$.
\end{thm}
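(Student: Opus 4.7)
The plan is to deduce this from Lemma \ref{lem:monoidally-stable-local-coidentifier} together with Proposition \ref{prop:ref-coidentifers-monoids}, applied in the monoidal 2-category $\Polyc{\Cat}(I,I)$. To begin with, Proposition \ref{prop:mon-mod-poly} tells us that $\ca A_I : \ca D_I \to \ca C_I$ is a monoidal modification on $\mathfrak{D}_I$, so its component at the monoid $P_T$ is a monoid (equivalently, 2-monad) 2-cell on $P_T$ in $\Polyc{\Cat}(I,I)$ whose underlying 2-cell in the endohom is $(\alpha_{B_T},\alpha_{E_T})$. Moreover, by Remark \ref{rem:for-def-T1-mod-sigma} this 2-cell admits a 1-section in $\Polyc{\Cat}(I,I)$, namely the evident diagram with components $(i_{B_T},i_{E_T})$ obtained from the unit of $(-) \times [1] \dashv (-)^{[1]}$, so the coidentifier at stake is reflexive.

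Next, I would invoke Lemma \ref{lem:monoidally-stable-local-coidentifier} to say that $(q_{B_T},q_{E_T})$ is a monoidally stable reflexive coidentifier of $(\alpha_{B_T},\alpha_{E_T})$ in $\Polyc{\Cat}(I,I)$, and to check that the underlying polynomial of the codomain is indeed the one displayed in the theorem; the pullback condition required to make it a polynomial 1-cell is precisely the characterisation of $\Sigma$-freeness provided by Proposition \ref{prop:Sigma-freeness-poly}(3), and exponentiability of $\pi_0 p_T$ is automatic since by $\Sigma$-freeness $\pi_0 E_T$ and $\pi_0 B_T$ are discrete. Then applying Proposition \ref{prop:ref-coidentifers-monoids} to the monoidal 2-category $\Polyc{\Cat}(I,I)$, one obtains a unique monad structure on the codomain polynomial making $(q_{B_T},q_{E_T})$ a morphism of monads, which establishes the first half of the statement.

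For the second half, I would use that Lemma \ref{lem:monoidally-stable-local-coidentifier} also asserts that the strong monoidal 2-functor $(\PFun{\Cat})_{I,I} : \Polyc{\Cat}(I,I) \to \tn{End}(\Cat/I)$ preserves this coidentifier. By monoidal stability together with the preservation of the monoidal structure by $\PFun{\Cat}$, the image in $\tn{End}(\Cat/I)$ is again a monoidally stable reflexive coidentifier (of $\alpha_T$, by Definition \ref{defn:T1-Sigma}), and so by the uniqueness in Proposition \ref{prop:ref-coidentifers-monoids} applied in $\tn{End}(\Cat/I)$, the induced monad morphism obtained by applying $\PFun{\Cat}$ to the morphism of monads constructed above coincides with the coidentifier of $\alpha_T$ in $\tn{Mnd}(\Cat/I)$, which by Definition \ref{defn:T-mod-Sigma} is $q_T$.

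The main obstacle in this argument is really Lemma \ref{lem:monoidally-stable-local-coidentifier}, deferred to the appendix: one must check that the coidentifier in $\Polyc{\Cat}(I,I)$ is computed component\-wise as indicated (requiring $\Sigma$-freeness, via Proposition \ref{prop:Sigma-freeness-poly}), that it is monoidally stable (which forces one to verify that taking $\pi_0$ interacts well with the pullbacks and distributivity pullbacks appearing in polynomial composition, using Lemma \ref{lem:ess-disc-inheritance}), and that $(\PFun{\Cat})_{I,I}$ preserves it. Granted that lemma, the present theorem is essentially a formal consequence of the enriched monoid machinery already developed.
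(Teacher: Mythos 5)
Your proposal is correct and follows essentially the same route as the paper: invoke Lemma \ref{lem:monoidally-stable-local-coidentifier} for the monoidally stable reflexive coidentifier in $\Polyc{\Cat}(I,I)$, apply Proposition \ref{prop:ref-coidentifers-monoids} to transfer the monoid structure, and then use preservation by $\PFun{\Cat}$ together with the uniqueness of the coidentified 2-monad structure (which the paper cites via Definition \ref{defn:T-mod-Sigma} and Example \ref{exam:coidentifier-of-2monads}, itself a consequence of the same uniqueness clause you invoke) to identify the image with $q_T$. Your added remarks on why the codomain is a genuine polynomial — the pullback from Proposition \ref{prop:Sigma-freeness-poly}(3) and exponentiability of $\pi_0 p_T$ — are points the paper handles in the discussion preceding the lemma rather than in the proof itself, but they are accurate.
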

\begin{proof}
Since $\PFun{\Cat}_{I,I}$ preserves the coidentifier (\ref{diag:local-coidentifier-in-Poly}), it may be regarded as sending (\ref{diag:local-coidentifier-in-Poly}) to the defining coidentifier in $\tn{End}(\Cat)$ of $T/\Sigma$. Since (\ref{diag:local-coidentifier-in-Poly}) is monoidally stable, by Proposition \ref{prop:ref-coidentifers-monoids}, the polynomial $(\pi_0s_T,\pi_0p_T,\pi_0t_T)$ acquires a unique 2-monad structure in $\Polyc{\Cat}$ making $(q_{B_T},q_{E_T})$ a morphism of monads. Since $\PFun{\Cat}$ is a homomorphism of 2-bicategories, it sends this 2-monad in $\Polyc{\Cat}$ to a 2-monad on $\Cat/I$, whose underlying endo-2-functor coincides with that of $T/\Sigma$ and which makes $q_T : T \to T/\Sigma$ a morphism of 2-monads. However by Definition \ref{defn:T-mod-Sigma} and Example \ref{exam:coidentifier-of-2monads}, $T/\Sigma$'s 2-monad structure is unique with this last property.
\end{proof}

\section{The Quillen equivalence of $\Algs T$ and $\Algs {T/\Sigma}$ when $T$ is $\Sigma$-free}
\label{sec:QuillenEquiv}

Given a 2-monad $T$ on a finitely complete and cocomplete 2-category $\ca K$, from \cite{Lack-HomotopyAspects2Monads} one has a Quillen model structure on $\Algs T$, in which $f$ is a fibration (resp. a weak equivalence) iff $U^Tf$ is an isofibration (resp. an equivalence) in $\ca K$. We shall call this the \emph{Lack model structure} on $\Algs T$. Given a morphism $\phi : S \to T$ of 2-monads on $\ca K$, the induced forgetful functor $\overline{\phi} : \Algs T \to \Algs S$ preserves fibrations and weak equivalences, and so if a left adjoint to $\overline{\phi}$ exists, then $\overline{\phi}$ is a right Quillen functor. In particular this applies to $\phi = q_T$ above, and so $C_T \ladj \overline{q}_T$ is a Quillen adjunction with respect to the Lack model structures on $\Algs T$ and $\Algs {T/\Sigma}$. In the main result of this section, Theorem \ref{thm:Sigma-free-Quillen-equivalence}, we establish that when $T$ is $\Sigma$-free, this Quillen adjunction is in fact a Quillen equivalence.

As explained at the end of Section \ref{sec:comm-algebra}, for any operad $T$ with object set $I$, the forgetful 2-functor $\overline{q}_T : \Algs {T/\Sigma} \to \Algs T$ induced by the morphism $q_T : T \to T/\Sigma$ of 2-monads, is the inclusion of the commutative $T$-algebras. Since the 2-category $\Algs T$ is cocomplete, this inclusion has a left adjoint
\[ C_T : \Algs T \longrightarrow \Algs {T/\Sigma}. \]
Since $\overline{q}_T$ is fully faithful, the counit of $C_T \ladj \overline{q}_T$ may be taken to be an identity. We will adopt the usual abuse of notation of regarding $\overline{q}_T$ as an inclusion, and writing $C_T$ also for the monad on $\Algs T$ coming from the adjunction $C_T \ladj \overline{q}_T$. Let us now embark on the discussion which will culminate in Proposition \ref{prop:explicit-adjunction-C_T}, in an explicit description of the unit of $C_T \ladj \overline{q}_T$ in terms of reflexive coidentifiers in $\Algs T$.

Recall from Definition \ref{defn:T-mod-Sigma}, that $q_T$ was obtained as the result of coidentifying the 2-monad 2-cell on the right
\begin{equation}\label{eq:2cell-for-q_T-and-its-polynomial}
\begin{gathered}
\xygraph{*{\xybox{\xygraph{!{0;(1.5,0):(0,.6)::} {I}="p0" [ur] {E^{[1]}_T}="p1" [r] {B^{[1]}_T}="p2" [dr] {I}="p3" [dl] {B_T}="p4" [l] {E_T}="p5" "p0":@{<-}"p1"^-{s^{[1]}_T}:"p2"^-{p^{[1]}_T}:"p3"^-{t^{[1]}_T}:@{<-}"p4"^-{t_T}:@{<-}"p5"^-{p_T}:"p0"^-{s_T}
"p1":@/_{1.25pc}/"p5"_(.425){d_{E_T}}|(.425){}="dal" "p2":@/_{1.25pc}/"p4"_(.425){d_{B_T}}|(.425){}="dar"
"p1":@/^{1.25pc}/"p5"^(.575){c_{E_T}}|(.575){}="cal" "p2":@/^{1.25pc}/"p4"^(.575){c_{E_T}}|(.575){}="car"
"dal":@{}"cal"|(.35){}="dl"|(.65){}="cl" "dl":@{=>}"cl"^-{\alpha_{E_T}}
"dar":@{}"car"|(.35){}="dr"|(.65){}="cr" "dr":@{=>}"cr"^-{\alpha_{B_T}}}}}
[r(5.5)]
*!(0,.58){\xybox{\xygraph{!{0;(1.5,0):(0,1)::} {T^{[1]}_{\Sigma}}="p0" [r] {T}="p1" "p0":@<1.5ex>"p1"|(.45){}="t"^-{d_T} "p0":@<-1.5ex>"p1"|(.45){}="b"_-{c_T} "t":@{}"b"|(.15){}="d"|(.85){}="c" "d":@{=>}"c"^-{\alpha_T}}}}}
\end{gathered}
\end{equation}
which itself is the result of applying $\PFun{\Cat}$ to the 2-cell in $\Polyc{\Cat}(I,I)$ on the left.
\begin{constn}\label{constn:left-right-T-modules}
We will now exhibit left and right module structures of the 2-monad $T$ on the endo-2-functor $T^{[1]}_{\Sigma}$, with respect to which $d_T$, $c_T$ and $\alpha_T$ are compatible. Pre and post composition with the polynomial $(s_T,p_T,t_T)$ is the effect on objects of endo-2-functors
\[ \begin{array}{c}
{(-) \comp (s_T,p_T,t_T) : \Polyc{\Cat}(I,I) \longrightarrow \Polyc{\Cat}(I,I)} \\
{(s_T,p_T,t_T) \comp (-) : \Polyc{\Cat}(I,I) \longrightarrow \Polyc{\Cat}(I,I)}
\end{array} \]
and since $(s_T,p_T,t_T)$ underlies a monad in $\Polyc{\Cat}$, these endo-2-functors underlie 2-monads on $\Polyc{\Cat}(I,I)$ which we denote as $T_R$ and $T_L$ respectively. Note that the forgetful 2-functors
\[ \begin{array}{lccr} {\Algs {T_L} \longrightarrow \Polyc{\Cat}(I,I)} &&&
{\Algs {T_R} \longrightarrow \Polyc{\Cat}(I,I)} \end{array} \]
create cotensors, and the left diagram of (\ref{eq:2cell-for-q_T-and-its-polynomial}) exhibits the polynomial $(s^{[1]}_T,p^{[1]}_T,t^{[1]}_T)$ as the cotensor of $[1]$ and $(s_T,p_T,t_T)$ in $\Polyc{\Cat}(I,I)$. Thus $(s^{[1]}_T,p^{[1]}_T,t^{[1]}_T)$ gets the structure of strict $T_L$-algebra (resp. strict $T_R$-algebra) making the left diagram of (\ref{eq:2cell-for-q_T-and-its-polynomial}) a 2-cell in $\Algs {T_L}$ (resp. $\Algs {T_R}$). The $T_L$ and $T_R$-actions on $(s^{[1]}_T,p^{[1]}_T,t^{[1]}_T)$ are 2-cells
\[ \begin{array}{c} {(s_T,p_T,t_T) \comp (s^{[1]}_T,p^{[1]}_T,t^{[1]}_T) \longrightarrow (s^{[1]}_T,p^{[1]}_T,t^{[1]}_T)} \\ {(s^{[1]}_T,p^{[1]}_T,t^{[1]}_T) \comp (s_T,p_T,t_T) \longrightarrow (s^{[1]}_T,p^{[1]}_T,t^{[1]}_T)} \end{array} \]
in $\Polyc{\Cat}$, which are sent by $\PFun{\Cat}$ to the left and right $T$-module structures
\[ \begin{array}{lccr} {a_L : TT^{[1]}_{\Sigma} \longrightarrow T^{[1]}_{\Sigma}} &&&
{a_R : T^{[1]}_{\Sigma}T \longrightarrow T^{[1]}_{\Sigma}.} \end{array} \]
\end{constn}
\begin{rem}\label{rem:left-right-T-modules-further-explanation}
The compatibility of $a_L$ and $a_R$ with $d_T$, $c_T$ and $\alpha_T$ says exactly that
\[ \xygraph{{\xybox{\xygraph{!{0;(2.5,0):(0,.5)::}
{TT^{[1]}_{\Sigma}}="p0" [r] {T^2}="p1"
"p0":@<1.5ex>"p1"|(.45){}="t1"^-{Td_T}
"p0":@<-1.5ex>"p1"|(.45){}="b1"_-{Tc_T} "t1":@{}"b1"|(.15){}="d1"|(.85){}="c1" "d1":@{=>}"c1"^-{T\alpha_T}
"p0" [d] {T^{[1]}_{\Sigma}}="q0" [r] {T}="q1"
"q0":@<1.5ex>"q1"|(.45){}="t2"^-{d_T} "q0":@<-1.5ex>"q1"|(.45){}="b2"_-{c_T} "t2":@{}"b2"|(.15){}="d2"|(.85){}="c2" "d2":@{=>}"c2"^-{\alpha_T}
"p0":"q0"_-{a_L} "p1":"q1"^-{\mu^T}}}}
[r(5)]
{\xybox{\xygraph{!{0;(2.5,0):(0,.5)::}
{T^{[1]}_{\Sigma}T}="p0" [r] {T^2}="p1"
"p0":@<1.5ex>"p1"|(.45){}="t1"^-{d_TT}
"p0":@<-1.5ex>"p1"|(.45){}="b1"_-{c_TT} "t1":@{}"b1"|(.15){}="d1"|(.85){}="c1" "d1":@{=>}"c1"^-{\alpha_TT}
"p0" [d] {T^{[1]}_{\Sigma}}="q0" [r] {T}="q1"
"q0":@<1.5ex>"q1"|(.45){}="t2"^-{d_T} "q0":@<-1.5ex>"q1"|(.45){}="b2"_-{c_T} "t2":@{}"b2"|(.15){}="d2"|(.85){}="c2" "d2":@{=>}"c2"^-{\alpha_T}
"p0":"q0"_-{a_R} "p1":"q1"^-{\mu^T}}}}} \]
commutes, that is to say, that $\mu^T(T\alpha_T) = \alpha_Ta_L$ and $\mu^T(\alpha_TT) = \alpha_Ta_R$. In particular evaluating the compatibility of the left action at $X \in \Cat/I$, one has a strict $T$-algebra structure $a_{L,X} : TT^{[1]}_{\Sigma}X \to T^{[1]}_{\Sigma}X$, with respect to which $d_{T,X}$ and $c_{T,X}$ are strict $T$-algebra morphisms, and $\alpha_{T,X}$ is an algebra 2-cell.
\end{rem}
\begin{rem}\label{rem:T-mod-sigma-X-alg-structure}
Since $T$ preserves all sifted colimits by \cite{Weber-PolynomialFunctors} Theorem 4.5.1, the forgetful 2-functor $U^T : \Algs T \to \Cat/I$ creates reflexive coidentifiers. Thus by Remark \ref{rem:left-right-T-modules-further-explanation}, for all $X \in \Cat/I$, $T/\Sigma(X)$ gets a strict $T$-algebra structure $a_X : TT/\Sigma(X) \to T/\Sigma(X)$ making $q_{T,X} : TX \to T/\Sigma(X)$ a strict $T$-algebra morphism. On the other hand,
\[ \overline{q}_{T}(T/\Sigma(X),\mu^{T/\Sigma}_X) = (T/\Sigma(X),\mu^{T/\Sigma}_Xq_{T,T/\Sigma(X)}) \]
is also a strict $T$-algebra structure whose underlying object in $\Cat/I$ is $T/\Sigma(X)$. To see that these agree, note that $a_X$ is the unique morphism making the square on the left
\[ \xygraph{{\xybox{\xygraph{!{0;(2,0):(0,.5)::} {T^2X}="p0" [r] {TT/\Sigma(X)}="p1" [d] {T/\Sigma(X)}="p2" [l] {TX}="p3" "p0":"p1"^-{Tq_{T,X}}:"p2"^-{a_X}:@{<-}"p3"^-{q_{T,X}}:@{<-}"p0"^-{\mu^T_X}}}}
[r(6)]
{\xybox{\xygraph{!{0;(2,0):(0,.5)::} {T^2X}="p0" [r] {TT/\Sigma(X)}="p1" [r(1.5)] {(T/\Sigma)^2X}="p2" [l(1.5)d] {T/\Sigma(X)}="p3" [l] {TX}="p4" "p0":"p1"^-{Tq_{T,X}}:"p2"^-{q_{T,T/\Sigma(X)}}:"p3"^-{\mu^{T/\Sigma}_X}:@{<-}"p4"^-{q_{T,X}}:@{<-}"p0"^-{\mu^T_X}}}}} \]
commute in $\Cat/I$. Since $q_T$ is a morphism of monads the diagram on the right above commutes, and so $a_X = \mu^{T/\Sigma}_Xq_{T,T/\Sigma(X)}$.
\end{rem}
\begin{lem}\label{lem:comm-T-algebras-charn}
Let $T$ be an operad with set of colours $I$ and $(X,x)$ be a strict $T$-algebra. Then $(X,x)$ is commutative iff $x\alpha_{T,X}$ is an identity.
\end{lem}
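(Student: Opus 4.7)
The plan is to trace through the correspondence of Theorem \ref{thm:categorical-algebras-of-operads} and match the components of $\alpha_{T,X}$ against the symmetries of the induced strict morphism $T \to \CatAsOp$. Both the hypothesis (commutativity) and the conclusion ($x\alpha_{T,X} = \id$) turn out to say the same thing about the arrow map of $x : TX \to X$, namely that $x$ sends permutative arrows of $TX$ to identities.

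First, recall that by Theorem \ref{thm:categorical-algebras-of-operads} the strict $T$-algebra $(X,x)$ corresponds to a strict morphism $X : T \to \CatAsOp$, whose products $X_\alpha : \prod_j X_{i_j} \to X_i$ are recovered from the object map of $x$ and the arrow map of $x$ restricted to levelwise morphisms $(1,(\gamma_j)_j)$, and whose symmetries $\xi_{\alpha,\rho} : X_{\alpha\rho} \to X_\alpha c_\rho$ are recovered from the arrow map of $x$ on the permutative morphisms $(\rho,(1_{y_{\rho j}})_j) : (\alpha\rho,(y_{\rho j})_j) \to (\alpha,(y_j)_j)$ as $(y_j)_j$ varies. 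In particular, by Definition \ref{defn:commutative-morphism-into-CatAsOp}, $(X,x)$ is commutative precisely when $x$ sends every permutative arrow of $TX$ to an identity of $X$.

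Second, by Lemma \ref{lem:T1-Sigma-explicit}, $T^{[1]}_\Sigma X$ is the full subcategory of $(TX)^{[1]}$ consisting of the permutative morphisms, and $\alpha_{T,X}$ is the natural transformation $d_{T,X} \to c_{T,X}$ whose component at a permutative morphism $r \in T^{[1]}_\Sigma X$ is simply $r$ itself, regarded as an arrow of $TX$. Consequently the whiskered composite $x\alpha_{T,X}$ is the natural transformation of functors $T^{[1]}_\Sigma X \to X$ whose component at $r$ is $x(r)$, and so $x\alpha_{T,X}$ is an identity precisely when $x(r)$ is an identity of $X$ for every permutative arrow $r$ of $TX$.

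Combining these two observations gives the equivalence, so the only step to verify carefully is the matching of the components of $\alpha_{T,X}$ with the permutative arrows of $TX$, and this is already packaged in Lemma \ref{lem:T1-Sigma-explicit}. No real obstacle is anticipated; the proof is essentially a one-line application of Lemma \ref{lem:T1-Sigma-explicit} combined with the portion of the proof of Theorem \ref{thm:categorical-algebras-of-operads} that identifies symmetries with the action of $x$ on permutative maps.
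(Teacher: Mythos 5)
Your proof is correct, but it takes a genuinely different route from the paper's. You reduce both sides of the equivalence to the concrete condition ``$x$ sends every permutative arrow of $TX$ to an identity'': on one side via the identification of the symmetries $\xi_{\alpha,\rho}$ with the action of $x$ on permutative maps (from the proof of Theorem \ref{thm:categorical-algebras-of-operads}), and on the other via the explicit description of the components of $\alpha_{T,X}$ as the permutative arrows themselves (Lemma \ref{lem:T1-Sigma-explicit}). The paper instead works with the characterisation of commutativity as ``$x$ factors as $x = x'q_{T,X}$ with $x'$ a strict $(T/\Sigma)$-algebra structure'': the forward direction is then immediate from $q_{T,X}\alpha_{T,X} = \id$, and the converse induces $x'$ from the universal property of the coidentifier $q_{T,X}$ and verifies the unit and associativity axioms by hand (using that $q_{T,T/\Sigma(X)}T(q_{T,X})$ is an epimorphism). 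Your argument is shorter and more elementary, leaning on the combinatorial descriptions already established; the paper's argument buys something extra, namely it actually constructs the $(T/\Sigma)$-algebra structure $x'$ on $X$ from the hypothesis $x\alpha_{T,X} = \id$ without routing through the $\CatAsOp$ dictionary, which is the form of commutativity used in the adjunction $C_T \dashv \overline{q}_T$ later in Section \ref{sec:QuillenEquiv}. The only point to make explicit in your write-up is that ``all components of $x\alpha_{T,X}$ are identities'' does force $xd_{T,X} = xc_{T,X}$ as functors, so that $x\alpha_{T,X}$ is literally the identity 2-cell; this is automatic by naturality, but worth a sentence.
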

\begin{proof}
Let $(X,x)$ be a strict $T$-algebra. Then to say that $(X,x)$ is commutative is to say that the action $x$ factors as $x = x'q_{T,X}$ where $x'$ is a $T/\Sigma$-algebra structure. Thus $x\alpha_{T,X}$ is an identity. Conversely if $x\alpha_{T,X}$ is an identity, then one induces $x':T/\Sigma(X) \to X$ in $\Algs T$ unique such that $x'q_{T,X} = x$ by the universal property of $q_{T,X}$. Since $q_T$ is a morphism of 2-monads, $q_{T,X}\eta^T_X = \eta^{T/\Sigma}_X$, thus $x'\eta^{T/\Sigma}_X = x\eta^T_X = 1_X$, and so $x'$ satisfies the unit axiom for a strict $T/\Sigma$-algebra. The associative law for $x'$ is the bottom right square in
\[ \xygraph{!{0;(3,0):(0,.3333)::} {T^2X}="p0" [r(2)] {TX}="p1" [d] {T/\Sigma(X)}="p2" [d] {X}="p3" [l] {T/\Sigma(X)}="p4" [l] {TX}="p5" [u] {TT/\Sigma(X)}="p6" [r] {(T/\Sigma)^2(X)}="p7" "p0":"p1"^-{\mu^T_X}:"p2"^-{q_{T,X}}:"p3"^-{x'}:@{<-}"p4"^-{x'}:@{<-}"p5"^-{q_{T,X}}:@{<-}"p6"^-{Tx'}(:@{<-}"p0"^-{Tq_{T,X}},:"p7"^-{q_{T,T/\Sigma(X)}}(:"p4"^-{T/\Sigma(x')},:"p2"^-{\mu^{T/\Sigma}_X}))} \]
the outside of which commutes by the associative law for $x$, the top region commutes since $q_T$ is a morphism of 2-monads, and the bottom left square commutes by the naturality of $q_T$. Since $T$ preserves sifted colimits, $Tq_{T,X}$ is a coidentifying map and thus an epimorphism, thus $q_{T,T/\Sigma(X)}T(q_{T,X})$ is an epimorphism, so that $x'$'s associative law follows.
\end{proof}
\begin{defn}\label{defn:C_TX}
Let $T$ be an operad with set of colours $I$ and $(X,x)$ be a strict $T$-algebra. Then the morphism $r_X$ of $\Cat/I$ is defined to be the coidentifier
\[ \xygraph{!{0;(2,0):(0,1)::} {T^{[1]}_{\Sigma}X}="p0" [r] {X}="p1" [r] {C_TX.}="p2" "p0":@<1.5ex>"p1"|(.45){}="t"^-{xd_{T,X}}:"p2"^-{r_X} "p0":@<-1.5ex>"p1"|(.45){}="b"_-{xc_{T,X}} "t":@{}"b"|(.15){}="d"|(.85){}="c" "d":@{=>}"c"^-{x\alpha_{T,X}}} \]
\end{defn}
As explained in Section \ref{sec:comm-algebra}, $\alpha_{T,X}$ has a 1-section. Since $x$ has section $\eta^T_X$, the coidentifier of Definition \ref{defn:C_TX} is reflexive, and since $T$ preserves sifted colimits, this coidentifier is created by the forgetful 2-functor $U^T : \Algs T \to \Cat/I$. Hence $C_TX$ acquires a unique strict $T$-algebra structure $x' : TC_TX \to C_TX$ making $r_X$ a strict $T$-algebra morphism.
\begin{prop}\label{prop:explicit-adjunction-C_T}
In the context of Definition \ref{defn:C_TX}
\begin{enumerate}
\item $r_X$ is the component at $(X,x)$ of the unit of $C_T \ladj \overline{q}_T$.
\label{propcase:unit-C_T-adjunction}
\item If $(X,x)$ is a free $T$-algebra $(TZ,\mu^T_Z)$, then $r_{TZ} = q_{T,Z}$.
\label{propcase:unit-C_T-adjunction-free-case}
\end{enumerate}
\end{prop}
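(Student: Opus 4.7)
My plan for (\ref{propcase:unit-C_T-adjunction}) is first to show that $C_TX$ underlies a commutative $T$-algebra, so that it lies in the essential image of $\overline{q}_T$, and then to verify the universal property of $r_X$. For commutativity, since $T^{[1]}_\Sigma$ is polynomial, Theorem 4.5.1 of \cite{Weber-PolynomialFunctors} ensures it preserves sifted colimits, and hence the reflexive coidentifier defining $r_X$, so that $T^{[1]}_\Sigma(r_X)$ is itself a reflexive coidentifier. Using the naturality of $d_T$, $c_T$ and $\alpha_T$ with respect to $r_X$ together with the fact that $r_X$ is a strict $T$-algebra morphism, one computes
\[
(x' \alpha_{T,C_TX})(T^{[1]}_\Sigma(r_X)) \;=\; x' T(r_X) \alpha_{T,X} \;=\; r_X x \alpha_{T,X},
\]
and the right-hand side is an identity 2-cell by the defining property of $r_X$. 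The 2-dimensional universal property of the coidentifier $T^{[1]}_\Sigma(r_X)$ then forces $x'\alpha_{T,C_TX}$ to be an identity, so $(C_TX, x')$ is commutative by Lemma \ref{lem:comm-T-algebras-charn}.

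For the universal property of $r_X$, let $(Y,y')$ be any strict $T/\Sigma$-algebra, set $y = y' q_{T,Y}$ for its underlying $T$-action, and let $f : (X,x) \to (Y,y)$ be a strict $T$-algebra morphism. Since $(Y,y)$ is commutative, $y\alpha_{T,Y}$ is an identity, and combining this with the naturality of $\alpha_T$ at $f$ and the $T$-algebra morphism property of $f$ gives
\[
f x \alpha_{T,X} \;=\; y T(f) \alpha_{T,X} \;=\; y \alpha_{T,Y} \, T^{[1]}_\Sigma(f) \;=\; \id.
\]
Hence $f$ factors uniquely through $r_X$ in $\Cat/I$, and this factorisation is automatically a strict $T$-algebra morphism because the defining coidentifier of $r_X$ is created by the sifted-colimit-preserving $U^T : \Algs T \to \Cat/I$, making $r_X$ a coidentifier already in $\Algs T$. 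The analogous 2-dimensional argument handles algebra 2-cells and pins down $r_X$ as the component at $(X,x)$ of the unit of $C_T \ladj \overline{q}_T$.

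For (\ref{propcase:unit-C_T-adjunction-free-case}), both $r_{TZ}$ and $q_{T,Z}$ are strict $T$-algebra maps $(TZ,\mu^T_Z) \to \overline{q}_T(T/\Sigma(Z), \mu^{T/\Sigma}_Z)$ --- the first by (\ref{propcase:unit-C_T-adjunction}) above, the second by Remark \ref{rem:T-mod-sigma-X-alg-structure}. By the free--forgetful adjunction $F^T \ladj U^T$, any two strict $T$-algebra maps out of $F^TZ$ that agree after restriction along $\eta^T_Z$ coincide, so it suffices to verify that $r_{TZ}\eta^T_Z = q_{T,Z}\eta^T_Z$. The right-hand side equals $\eta^{T/\Sigma}_Z$ since $q_T$ is a morphism of 2-monads, and the left-hand side equals $\eta^{T/\Sigma}_Z$ because it is the unit at $Z$ of the composite adjunction $F^{T/\Sigma} = C_T F^T \ladj U^T \overline{q}_T = U^{T/\Sigma}$. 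The main obstacle I anticipate is the bookkeeping in part (\ref{propcase:unit-C_T-adjunction}), in particular invoking the 2-dimensional universal property of $T^{[1]}_\Sigma(r_X)$ to deduce commutativity of $C_TX$; once in hand, part (\ref{propcase:unit-C_T-adjunction-free-case}) is a clean consequence of the standard calculus of adjunctions.
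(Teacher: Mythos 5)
Your part (\ref{propcase:unit-C_T-adjunction}) is essentially the paper's argument: the same use of sifted-colimit preservation of $T^{[1]}_{\Sigma}$ to see that $T^{[1]}_{\Sigma}(r_X)$ is coidentifying (hence that $x'\alpha_{T,C_TX}$ is forced to be an identity), the same appeal to Lemma \ref{lem:comm-T-algebras-charn}, and the same factorisation argument for the universal property, with creation of the coidentifier by $U^T$ handling strictness of the induced map.

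For part (\ref{propcase:unit-C_T-adjunction-free-case}) you take a genuinely different route. The paper works at the level of the defining colimits: using the right $T$-module structure $a_R$ of Construction \ref{constn:left-right-T-modules} and the identity $\mu^T_Z\alpha_{T,TZ} = \alpha_{T,Z}a_{R,Z}$ from Remark \ref{rem:left-right-T-modules-further-explanation}, together with the fact that $a_{R,Z}$ is (split) epi via its section $T^{[1]}_{\Sigma}\eta^T_Z$, it shows that a $1$-cell coidentifies $\mu^T_Z\alpha_{T,TZ}$ iff it coidentifies $\alpha_{T,Z}$; hence the two coidentifiers literally coincide and $r_{TZ}=q_{T,Z}$. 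You instead compare the two maps as strict $T$-algebra morphisms out of the free algebra $F^TZ$ and invoke uniqueness of left adjoints to $U^{T/\Sigma}=U^T\overline{q}_T$. This is a clean and correct idea, but note the elision: writing ``$F^{T/\Sigma}=C_TF^T$'' and ``it suffices to check $r_{TZ}\eta^T_Z=q_{T,Z}\eta^T_Z$'' presupposes that $C_T(TZ)$ and $T/\Sigma(Z)$ are the \emph{same} object, whereas uniqueness of adjoints only provides a canonical isomorphism $\theta_Z : C_T(TZ)\iso T/\Sigma(Z)$ with $\theta_Z r_{TZ}=q_{T,Z}$. That is all one needs for the application in Theorem \ref{thm:Sigma-free-Quillen-equivalence} (where only the property of being a surjective equivalence is transported), but to obtain the equality as literally stated --- i.e.\ that $q_{T,Z}$ itself satisfies the universal property defining $r_{TZ}$ in Definition \ref{defn:C_TX} --- you still need the paper's comparison of the two coidentified $2$-cells, or an equivalent direct argument. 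What your approach buys is independence from the module calculus of Construction \ref{constn:left-right-T-modules}; what the paper's buys is the on-the-nose identification of the colimits.
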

\begin{proof}
(\ref{propcase:unit-C_T-adjunction}): By the commutativity of 
\[ \xygraph{!{0;(3,0):(0,.5)::}
{T^{[1]}_{\Sigma}X}="p0" [r] {TX}="p1" [r] {X}="p2" "p0":@<1.5ex>"p1"|(.45){}="t1"^-{d_{T,X}}:"p2"^-{x} "p0":@<-1.5ex>"p1"|(.45){}="b1"_-{c_{T,X}} "t1":@{}"b1"|(.15){}="d1"|(.85){}="c1" "d1":@{=>}"c1"^-{\alpha_{T,X}}
"p0" [d] {T^{[1]}_{\Sigma}C_TX}="q0" [r] {TC_TX}="q1" [r] {C_TX}="q2" "q0":@<1.5ex>"q1"|(.45){}="t2"^-{d_{T,C_TX}}:"q2"_-{x'} "q0":@<-1.5ex>"q1"|(.45){}="b2"_-{c_{T,C_TX}} "t2":@{}"b2"|(.15){}="d2"|(.85){}="c2" "d2":@{=>}"c2"^-{\alpha_{T,C_TX}}
"p0":"q0"_-{T^{[1]}_{\Sigma}r_X} "p1":"q1"_-{Tr_X} "p2":"q2"^-{r_X}} \]
and since $r_Xx\alpha_{T,X}$ is an identity by definition, $x'\alpha_{T,C_TX}T^{[1]}_{\Sigma}(r_X)$ is an identity. By Theorem 4.5.1 of \cite{Weber-PolynomialFunctors}, $T^{[1]}_{\Sigma}$ preserves sifted colimits and thus reflexive coidentifiers in particular. Thus $T^{[1]}_{\Sigma}(r_X)$ is a coidentifiying map and so an epimorphism. Thus $x'\alpha_{T,C_TX}$ is an identity, and so $(C_TX,x')$ is commutative by Lemma \ref{lem:comm-T-algebras-charn}. Given any other commutative strict $T$-algebra $(Y,y)$, and a strict morphism $f : X \to Y$ of strict $T$-algebras, the commutativity of
\[ \xygraph{!{0;(3,0):(0,.5)::}
{T^{[1]}_{\Sigma}X}="p0" [r] {TX}="p1" [r] {X}="p2" "p0":@<1.5ex>"p1"|(.45){}="t1"^-{d_{T,X}}:"p2"^-{x} "p0":@<-1.5ex>"p1"|(.45){}="b1"_-{c_{T,X}} "t1":@{}"b1"|(.15){}="d1"|(.85){}="c1" "d1":@{=>}"c1"^-{\alpha_{T,X}}
"p0" [d] {T^{[1]}_{\Sigma}Y}="q0" [r] {TY}="q1" [r] {Y}="q2" "q0":@<1.5ex>"q1"|(.45){}="t2"^-{d_{T,Y}}:"q2"_-{y} "q0":@<-1.5ex>"q1"|(.45){}="b2"_-{c_{T,Y}} "t2":@{}"b2"|(.15){}="d2"|(.85){}="c2" "d2":@{=>}"c2"^-{\alpha_{T,Y}}
"p0":"q0"_-{T^{[1]}_{\Sigma}f} "p1":"q1"_-{Tf} "p2":"q2"^-{f}} \]
ensures that $fx\alpha_{T,X}$ is an identity since $y\alpha_{T,Y}$ is by Lemma \ref{lem:comm-T-algebras-charn}, and so there is a unique morphism $f' : C_TX \to Y$ of $\Algs T$ such that $f'r_X = f$.

(\ref{propcase:unit-C_T-adjunction-free-case}): From Remark \ref{rem:left-right-T-modules-further-explanation} one has $\alpha_{T,Z}a_{R,Z} = \mu^T_Z\alpha_{T,TZ}$ in
\[ \xygraph{!{0;(3,0):(0,.5)::}
{T^{[1]}_{\Sigma}TZ}="p0" [r] {T^2Z}="p1"
"p0":@<1.5ex>"p1"|(.45){}="t1"^-{d_{T,TZ}}
"p0":@<-1.5ex>"p1"|(.45){}="b1"_-{c_{T,TZ}} "t1":@{}"b1"|(.15){}="d1"|(.85){}="c1" "d1":@{=>}"c1"^-{\alpha_{T,TZ}}
"p0" [d] {T^{[1]}_{\Sigma}Z}="q0" [r] {TZ}="q1" [r] {W.}="q2" "q0":@<1.5ex>"q1"|(.45){}="t2"^-{d_{T,Z}}:"q2"_-{g} "q0":@<-1.5ex>"q1"|(.45){}="b2"_-{c_{T,Z}} "t2":@{}"b2"|(.15){}="d2"|(.85){}="c2" "d2":@{=>}"c2"^-{\alpha_{T,Z}}
"p0":"q0"_-{a_{R,Z}} "p1":"q1"_-{\mu^T_Z}} \]
For any morphism $g$ as indicated in the diagram, to say that $g\mu^T_Z\alpha_{T,TZ} = \id$, is to say that $g\alpha_{T,Z}a_{R,Z} = \id$, which in turn is equivalent to $g\alpha_{T,Z} = \id$ since $a_{R,Z}$ is an epimorphism as it has a section $T^{[1]}_{\Sigma}\eta^T_Z$. Thus $\mu^T_Z\alpha_{T,TZ}$ and $\alpha_{T,Z}$ have the same coidentifier.
\end{proof}
As we shall now see, this last result together with Theorem \ref{thm:T-mod-Sigma-polynomial} enables us to witness $\overline{q}_T$ as part of a Quillen equivalence when $T$ is $\Sigma$-free, with respect to the Lack model structures. The key to the proof of Theorem \ref{thm:Sigma-free-Quillen-equivalence} is that thanks to the developments of \cite{Lack-HomotopyAspects2Monads}, one has a cofibrant replacement which behaves well with respect to reflexive coidentifiers. We now recall the necessary background.

From \cite{Lack-HomotopyAspects2Monads}, the comonad corresponding to the adjunction
\[ \xygraph{!{0;(3,0):(0,1)::} {\Algs T}="p0" [r] {\Alg T}="p1" "p0":@<-1ex>"p1"_-{J}|-{}="b":@<-1ex>"p0"_-{Q}|-{}="t" "t":@{}"b"|-{\perp}} \]
in which $J$ is the inclusion, gives a cofibrant replacement for $\Algs T$, when $\Algs T$ possesses sufficient colimits for $Q$ to exist. This adjunction is fundamental to 2-dimensional monad theory \cite{BWellKellyPower-2DMndThy}. Writing $J$ as an inclusion, the component of the counit of $Q \ladj J$ at $X$ is a strict $T$-algebra morphism $q_X : QX \to X$, and the unit is a pseudo morphism $p_X : X \to QX$. From Theorem 4.2 of \cite{BWellKellyPower-2DMndThy}, $p_X \ladj q_X$ in $\Alg{T}$ with unit an identity and counit invertible. Thus in particular, $q_X$ is a weak equivalence in $\Algs T$. Theorem 4.12 of \cite{Lack-Codescent} says that the cofibrant objects are exactly the flexible algebras in the sense of \cite{BWellKellyPower-2DMndThy}, and that $QX$ is flexible for all $(X,x)$. Recall $(X,x)$ is \emph{flexible} when $q_X : QX \to X$ admits a section in $\Algs T$ ($p_X$ is only in $\Alg T$ in general).

From \cite{Lack-Codescent} $Q$ can be computed explicitly in terms of isocodescent objects in $\Algs T$. Formally, an \emph{isocodescent object} is a colimit weighted by the functor $IL\delta : \Delta \to \Cat$, where $\delta : \Delta \to \Cat$ is the standard inclusion which regards the ordinals as categories, $I : \tnb{Gpd} \to \Cat$ is the inclusion of groupoids, and $L$ is the left adjoint of $I$. In more explicit terms, a \emph{cocone} for a simplicial object $X : \Delta^{\op} \to \ca K$ in a 2-category $\ca K$ with vertex $Y$, amounts to a pair $(f_0,f_1)$, where $f_0 : X_0 \to Y$, $f_1$ is an invertible 2-cell $f_0d_1 \to f_0d_0$, and these satisfy $f_1s_0 = 1_{f_0}$ and $(f_1d_0)(f_1d_2) = f_1d_1$ (using the usual notation for face and degeneracy maps of $X$). When $\ca K$ admits cotensors with $[1]$, the cocone $(f_0,f_1)$ exhibits $Y$ as the isocodescent object $\tn{colim}(IL\delta,X)$ of $X$, when it satisfies a 1-dimensional universal property, which says: for any cocone $(g_0,g_1)$ with vertex $Z$, there is a unique $g':Y \to Z$ such that $g'f_0 = g_0$ and $g'f_1 = g_1$.

For $X \in \Algs T$, the bar construction of $X$ is the simplicial object $\ca R_TX : \Delta^{\op} \to \Algs T$ whose 2-truncation is 
\[ \xygraph{!{0;(2,0):(0,1)::} {T^3X}="p0" [r] {T^2X}="p1" [r] {TX}="p2"
"p2":"p1"|-{T\eta^T_X} "p1":@<1.5ex>"p2"^-{\mu^T_X} "p1":@<-1.5ex>"p2"_-{Tx} "p0":@<1.5ex>"p1"^-{\mu^T_{TX}} "p0":"p1"|-{T\mu^T_X} "p0":@<-1.5ex>"p1"_-{T^2x}} \]
and then $QX$ is its isocodescent object. This is described in detail in Remark 6.9 of \cite{Bourke-Thesis}. We denote by $c_X : TX \to QX$ the 1-cell datum of the isocodescent cocone. What makes this cofibrant replacement convenient for us is that isocodescent objects are both sifted and flexible colimits. Their siftedness was established in Proposition 4.38 of \cite{Bourke-Thesis}, and as explained in Remark 2.18 of \cite{Bourke-Thesis}, they can be constructed from coinserters and coequifiers, and so are flexible by \cite{BKPS-FlexibleLimits}. Intuitively, flexible colimits are those 2-categorical colimits which are homotopically well-behaved.
\begin{thm}\label{thm:Sigma-free-Quillen-equivalence}
If $T$ is a $\Sigma$-free operad, then with respect to the Lack model structures on $\Algs T$ and $\Algs {T/\Sigma}$,  $C_T \ladj \overline{q}_T$ is a Quillen equivalence.
\end{thm}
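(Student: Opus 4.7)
The strategy is to reduce the Quillen equivalence to the statement, already accessible via the polynomial description, that $q_{T,Z}$ is an equivalence of categories over $I$ for every $Z \in \Cat/I$ when $T$ is $\Sigma$-free. Since every object of $\Algs T$ is fibrant in the Lack model structure, and since $\overline{q}_T$ is fully faithful with identity counit (hence reflects weak equivalences), the standard criterion tells us that $C_T \ladj \overline{q}_T$ is a Quillen equivalence iff for every cofibrant $X \in \Algs T$ the unit $r_X : X \to C_TX$ is a weak equivalence in $\Algs T$, i.e. an equivalence of categories over $I$ after applying $U^T$.

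I would then reduce to free algebras via Lack's cofibrant replacement $Q$. Every cofibrant $X$ is flexible, hence a retract in $\Algs T$ of $QX$; so by naturality of $r$ and the closure of weak equivalences under retracts it suffices to show that $r_{QX}$ is a weak equivalence. Since $QX$ is the isocodescent in $\Algs T$ of the bar resolution $\ca R_T X$, whose $n$-th level is the free algebra $T^{n+1}X$, and since $C_T$ preserves this colimit as a left adjoint, $r_{QX}$ is the isocodescent of the levelwise maps $r_{T^{n+1}X}$. By Proposition \ref{prop:explicit-adjunction-C_T}(\ref{propcase:unit-C_T-adjunction-free-case}) one has $r_{T^{n+1}X} = q_{T,T^nX}$, so once a flexibility argument ensures that isocodescent of the bar resolution preserves a levelwise weak equivalence between diagrams of cofibrant (in fact free) algebras, the problem reduces to showing that $q_{T,Z} : TZ \to T/\Sigma(Z)$ is an equivalence of categories over $I$ for every $Z \in \Cat/I$.

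The last step is where $\Sigma$-freeness enters, and it is handled at the polynomial level using Theorem \ref{thm:T-mod-Sigma-polynomial}: this exhibits $q_T$ as the image under $\PFun{\Cat}$ of the polynomial morphism whose central components are $q_{E_T} : E_T \to \pi_0 E_T$ and $q_{B_T} : B_T \to \pi_0 B_T$. By Proposition \ref{prop:Sigma-freeness-poly}, the $\Sigma$-freeness of $T$ forces $B_T$ and $E_T$ to be equivalent to discrete categories, so both $q_{B_T}$ and $q_{E_T}$ are equivalences in $\Cat$, while the naturality square of $q$ over $p_T$ is a pullback. Unpacking the explicit description of polynomial functors as iterated pullbacks of $\tn{Fam}$-constructions (Example \ref{exam:polyfunctor-Cat-middle-dopfib}), and using that $\tn{Fam}$ preserves equivalences of categories and that the relevant pullbacks and distributivity pullbacks are well-behaved with respect to equivalences because $p_T$ is a discrete opfibration, one deduces that $q_{T,Z}$ is pointwise an equivalence in $\Cat/I$.

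The main obstacle I expect lies in the middle paragraph: making precise the flexibility argument that lets isocodescent transmit levelwise weak equivalences between diagrams of free algebras to a weak equivalence on $QX$. This should follow from isocodescent being a flexible colimit (as noted in the excerpt, it is built from coinserters and coequifiers, and so from \cite{BKPS-FlexibleLimits} is flexible and hence homotopically well-behaved) combined with the cofibrancy of each $T^{n+1}X$. The polynomial-level argument in the final paragraph should then be routine once the pullback-vs-equivalence bookkeeping is done carefully, using that $\tn{Fam}$ sends equivalences to equivalences and that pullbacks of equivalences along discrete opfibrations are again equivalences.
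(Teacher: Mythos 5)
Your skeleton is the same as the paper's: reduce to showing the unit $r_X$ is a weak equivalence on cofibrant (= flexible) algebras, settle the free case via Proposition \ref{prop:explicit-adjunction-C_T}(\ref{propcase:unit-C_T-adjunction-free-case}) and the polynomial presentation of $q_T$ from Theorem \ref{thm:T-mod-Sigma-polynomial}, and pass to general cofibrants through the bar resolution and its isocodescent object $QX$. Two remarks on where your plan diverges in detail. First, your reduction from flexible $X$ to $QX$ by a retract argument is a legitimate alternative to the paper's route, which instead applies the 2-out-of-3 property to the naturality square $r_X q_X = (C_Tq_X) r_{QX}$, using that $C_Tq_X$ is an equivalence because any section of $q_X$ is a pseudo-inverse.

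Second, and more substantively: the step you flag as the main obstacle is resolved in the paper not by cofibrancy of the levels $T^{n+1}X$ but by upgrading the pointwise statement from ``equivalence'' to \emph{surjective equivalence} (an equivalence admitting a pseudo-inverse section). Surjective equivalences are preserved by flexible-weighted colimits such as $\tn{colim}(IL\delta,-)$ by Theorem 21(b) of \cite{BourkeGarner-PieAlgebras}, applied in $\Cat/I$ after noting that $U^T$ creates all the sifted colimits in sight. This is why the free case must deliver more than your final paragraph aims for: rather than unpacking the $\tn{Fam}$-description to see that each $q_{T,Z}$ is an equivalence, the paper chooses a section $i$ of $q_{B_T}$ (possible since $B_T$ is equivalent to a discrete category), induces $i'$ on $E_T$ from the pullback of Theorem \ref{thm:T-mod-Sigma-polynomial}, and obtains a pseudo-inverse section of $q_T$ already in $\tn{End}(\Cat/I)$, so that every $q_{T,Z}$ is a surjective equivalence. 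If you only establish that the levelwise maps are equivalences, the passage across the isocodescent object is not justified by flexibility alone, so you should either strengthen your $\tn{Fam}$-unpacking to produce the section (which it can, since $q_{B_T}$ is surjective on objects) or adopt the paper's construction directly.
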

\begin{proof}
In this proof we use the term \emph{surjective equivalence} for a morphism $e : A \to B$ of a 2-category which has a pseudo-inverse section. It suffices to show that for all cofibrant objects $(X,x) \in \Algs T$, $r_X$ is a weak equivalence in $\Algs T$. In other words, we must show that for all flexible $(X,x)$, $U^Tr_X$ is an equivalence in $\Cat/I$.

Since $T$ is $\Sigma$-free, the monad morphism $q_T$ is the result of applying $\PFun{\Cat}$ to
\[ \xygraph{!{0;(1.5,0):(0,.5)::} {I}="p0" [ur] {E_T}="p1" [r] {B_T}="p2" [dr] {I}="p3" [dl] {\pi_0B_T}="p4" [l] {\pi_0E_T}="p5" "p0":@{<-}"p1"^-{s_T}:"p2"^-{p_T}:"p3"^-{t_T}:@{<-}"p4"^-{\pi_0t_T}:@{<-}"p5"^-{\pi_0p_T}:"p0"^-{\pi_0s_T}
"p1":"p5"_-{q_{E_T}} "p2":"p4"^-{q_{B_T}} "p1":@{}"p4"|-{\tn{pb}}} \]
by Theorem \ref{thm:T-mod-Sigma-polynomial}. Since $B_T$ is equivalent to a discrete category, any section $i : \pi_0B_T \to B_T$ is a pseudo-inverse section of $q_{B_T}$. Defining $i' : \pi_0E_T \to E_T$ as the unique functor such that $q_{E_T}i' = 1_{\pi_0E_T}$ and $p_Ti' = i\pi_0p_T$, $(i',i)$ is a pseudo-inverse section of $(q_{E_T},q_{B_T})$ in the 2-category $\Polyc{\Cat}(I,I)$. Applying $\PFun{\Cat}$ to this, one obtains a pseudo-inverse section $T/\Sigma \to T$ in $\tn{End}(\Cat/I)$ to $q_T$. Thus by Proposition \ref{prop:explicit-adjunction-C_T}(\ref{propcase:unit-C_T-adjunction-free-case}), $U^Tr_X$ is a surjective equivalence when $X$ is free.

For a general $(X,x)$ we will now show that $U^Tr_{QX}$ is an equivalence. The top left $3 \times 3$ part of
\[ \xygraph{!{0;(3,0):(0,.5)::}
{T^{[1]}_{\Sigma}T^3X}="p11" [r] {T^{[1]}_{\Sigma}T^2X}="p12" [r] {T^{[1]}_{\Sigma}TX}="p13" [r] {T^{[1]}_{\Sigma}QX}="p14"
"p13":"p14"^-{T^{[1]}_{\Sigma}c_X} "p13":"p12"|-{T^{[1]}_{\Sigma}T\eta^T_X} "p12":@<1.5ex>"p13"^-{T^{[1]}_{\Sigma}\mu^T_X} "p12":@<-1.5ex>"p13"_-{T^{[1]}_{\Sigma}Tx} "p11":@<1.5ex>"p12"^-{T^{[1]}_{\Sigma}\mu^T_{TX}} "p11":"p12"|-{T^{[1]}_{\Sigma}T\mu^T_X} "p11":@<-1.5ex>"p12"_-{T^{[1]}_{\Sigma}T^2x}
"p11" [d] {T^4X}="p21" [r] {T^3X}="p22" [r] {T^2X}="p23" [r] {TQX}="p24"
"p23":"p24"^-{Tc_X} "p23":"p22"|-{T^2\eta^T_X} "p22":@<1.5ex>"p23"^-{T\mu^T_X} "p22":@<-1.5ex>"p23"_-{T^2x} "p21":@<1.5ex>"p22"^-{T\mu^T_{TX}} "p21":"p22"|-{T^2\mu^T_X} "p21":@<-1.5ex>"p22"_-{T^3x}
"p21" [d] {T^3X}="p31" [r] {T^2X}="p32" [r] {TX}="p33" [r] {QX}="p34"
"p33":"p34"^-{c_X} "p33":"p32"|-{T\eta^T_X} "p32":@<1.5ex>"p33"^-{\mu^T_X} "p32":@<-1.5ex>"p33"_-{Tx} "p31":@<1.5ex>"p32"^-{\mu^T_{TX}} "p31":"p32"|-{T\mu^T_X} "p31":@<-1.5ex>"p32"_-{T^2x}
"p31" [d] {T/\Sigma(T^2X)}="p41" [r] {T/\Sigma(TX)}="p42" [r] {T/\Sigma(X)}="p43" [r] {C_TQX}="p44"
"p43":"p44" "p43":"p42" "p42":@<1.5ex>"p43" "p42":@<-1.5ex>"p43" "p41":@<1.5ex>"p42" "p41":"p42" "p41":@<-1.5ex>"p42"
"p11":@<2ex>"p21"|-{}="pc1" "p11":@<-2ex>"p21"|-{}="pd1":"p31"_-{\mu^T_{T^2X}}:"p41"_-{q_{T,T^2X}}
"pd1":@{}"pc1"|(.25){}="d1"|(.75){}="c1" "d1":@{=>}"c1"^-{\alpha_T}
"p12":@<2ex>"p22"|-{}="pc2" "p12":@<-2ex>"p22"|-{}="pd2":"p32"_-{\mu^T_{TX}}:"p42"_-{q_{T,TX}}
"pd2":@{}"pc2"|(.25){}="d2"|(.75){}="c2" "d2":@{=>}"c2"^-{\alpha_T}
"p13":@<2ex>"p23"|-{}="pc3" "p13":@<-2ex>"p23"|-{}="pd3":"p33"_-{\mu^T_X}:"p43"_-{q_{T,X}}
"pd3":@{}"pc3"|(.25){}="d3"|(.75){}="c3" "d3":@{=>}"c3"^-{\alpha_T}
"p14":@<2ex>"p24"|-{}="pc4" "p14":@<-2ex>"p24"|-{}="pd4":"p34"^-{a}:"p44"^-{r_{QX}}
"pd4":@{}"pc4"|(.25){}="d4"|(.75){}="c4" "d4":@{=>}"c4"^-{\alpha_T}} \]
is serially commutative by naturality and the monad axioms of $T$, and the third row exhibits $QX$ as an isocodescent object as explained above. Since $T$ and $T^{[1]}_{\Sigma}$ are sifted colimit preserving, the first two rows are also isocodescent objects, and $a$ is the $T$-algebra structure of $QX$. The first three columns are reflexive coidentifiers by Proposition \ref{prop:explicit-adjunction-C_T}(\ref{propcase:unit-C_T-adjunction-free-case}), and fourth column is too by the definition of $r_{QX}$.

All these colimits are preserved and reflected by $U^T$ since they are sifted, and $T$ preserves all sifted colimits. Regarding the bottom two rows of the above diagram as isocodescent objects in $\Cat/I$, $U^Tr_X$ is the result of applying 
\[ \tn{colim}(IL\delta,-) : [\Delta^{\op},\Cat/I] \longrightarrow \Cat/I \]
to a morphism whose components are surjective equivalences by the free case. Since the weight $IL\delta$ for isocodescent objects is flexible, $\tn{colim}(IL\delta,-)$ sends componentwise surjective equivalences to surjective equivalences by Theorem 21(b) of \cite{BourkeGarner-PieAlgebras}. Thus $U^Tr_{QX}$ is a surjective equivalence.

When $(X,x)$ is flexible, $q_X$ has a section in $\Algs T$, and by Theorem 4.2 of \cite{BWellKellyPower-2DMndThy}, any such section is a pseudo inverse of $q_X$ in $\Algs T$. Thus $C_Tq_X$ is a surjective equivalence. In the naturality square
\[ \xygraph{!{0;(2,0):(0,.5)::} {QX}="p0" [r] {X}="p1" [d] {C_TX}="p2" [l] {C_TQX}="p3" "p0":"p1"^-{q_X}:"p2"^-{r_X}:@{<-}"p3"^-{C_Tq_X}:@{<-}"p0"^-{r_{QX}}} \]
$q_X$, $C_Tq_X$ and $r_{QX}$ are thus weak equivalences, and so the result follows from the 2 out of 3 property.
\end{proof}

\appendix
\section{Proof of Lemma \ref{lem:monoidally-stable-local-coidentifier}}
\label{sec:proof-lem-SigFree-charn}

In this appendix we supply the proof of Lemma \ref{lem:monoidally-stable-local-coidentifier}. This will be achieved below using Lemmas \ref{lem:composition-DI} and \ref{lem:general-coidentifier-stability}. Denote by $\mathfrak{E}_I$ the full sub-2-category of $\Polyc{\Cat}(I,I)$ consisting of those polynomials
\[ \xygraph{{I}="p0" [r] {E}="p1" [r] {B}="p2" [r] {I}="p3" "p0":@{<-}"p1"^-{s}:"p2"^-{p}:"p3"^-{t}} \]
such that $p$ is a discrete fibration with finite fibres, and $B$ is equivalent to a discrete category. By Lemma \ref{lem:ess-disc-inheritance} $E$ is also equivalent to a discrete category.
\begin{lem}\label{lem:composition-DI}
A composite of polynomials in $\mathfrak{E}_I$ is in $\mathfrak{E}_I$.
\end{lem}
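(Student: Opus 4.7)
The plan is to take polynomials $(s_i,p_i,t_i) \in \mathfrak{E}_I$ for $i=1,2$, form the composite $(s_3,p_3,t_3)$ as in the composition diagram displayed just before Theorem 4.1.4 of \cite{Weber-PolynomialFunctors}, and verify the two defining conditions of $\mathfrak{E}_I$: namely that $B_3$ is equivalent to a discrete category, and that $p_3 : E_3 \to B_3$ is a discrete fibration with finite fibres.

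For the first condition I would trace the composition diagram one stage at a time, invoking Lemma \ref{lem:ess-disc-inheritance} at each step. The hypothesis gives that $B_1, B_2$ are equivalent to discrete categories, and since $p_1, p_2$ are discrete fibrations, Lemma \ref{lem:ess-disc-inheritance}(\ref{lemcase:essdisc-inherited-along-dfib}) propagates this to $E_1$ and $E_2$. The intermediate pullback $B_1 \times_I E_2$, and the further pullback $F$ appearing in the composition (into which $E_1$ and $B_1 \times_I E_2$ both map via a pullback over $B_1$), are then equivalent to discrete by Lemma \ref{lem:ess-disc-inheritance}(\ref{lemcase:essdisc-inherited-by-pb}), using that $I$ is literally discrete. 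Finally $B_3$ appears as an apex of a distributivity pullback around $(p_2, F \to E_2)$, so since $p_2$ is a discrete fibration, Lemma \ref{lem:ess-disc-inheritance}(\ref{lemcase:essdisc-inherited-by-dpb}) yields that $B_3$ (and simultaneously $E_3$) is equivalent to discrete.

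For the second condition the crucial observation is the pullback clause built into the very definition of a distributivity pullback: if $(p,q,r)$ is a distributivity pullback around $(f,g)$, then by definition $(gp, f, r, q)$ forms a pullback square, so the map $q$ is canonically the pullback of $f$ along $r$. Specialising this to $(p_2, F \to E_2)$ identifies $E_3$ with $E_2 \times_{B_2} B_3$ and exhibits $p_3$ as the pullback of $p_2 : E_2 \to B_2$ along the induced map $B_3 \to B_2$. Since both ``being a discrete fibration'' and ``having finite fibres'' are stable under pullback in $\Cat$, $p_3$ inherits both properties from $p_2$.

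The proof is essentially bookkeeping: once the composition diagram is matched against the appropriate stability results, no delicate calculation is required. The one potential pitfall is reading the definition of the distributivity pullback carefully enough to recognise $p_3$ as a pullback of $p_2$ itself, rather than of some larger map such as the composite $F \to E_2 \to B_2$ — only the former identification makes the finite-fibre condition transfer cleanly.
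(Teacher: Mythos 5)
Your strategy for the essentially-discrete part is the paper's own (propagate through the composition diagram using Lemma \ref{lem:ess-disc-inheritance}), and it works once the diagram is read correctly: $P = B_1\times_I E_2$ is equivalent to discrete by part (\ref{lemcase:essdisc-inherited-by-pb}), then $F$ and $B_3$ both arise from the distributivity pullback around $(p_2,\, P\to E_2)$ -- not around $(p_2,\, F\to E_2)$, and $F$ is not a pullback of $E_1$ against $P$ -- so part (\ref{lemcase:essdisc-inherited-by-dpb}) applies to them, and finally $E_3 = E_1\times_{B_1}F$ is handled by part (\ref{lemcase:essdisc-inherited-by-pb}) again. That misreading is repairable bookkeeping.

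The treatment of $p_3$, however, contains a genuine error, and it is exactly the pitfall you flag at the end -- you fall into it rather than avoid it. The pullback clause of the distributivity pullback around $(p_2,\, P\to E_2)$ identifies $F$ (not $E_3$) with $E_2\times_{B_2}B_3$, and exhibits the map $F\to B_3$ (not $p_3$) as the pullback of $p_2$ along $B_3\to B_2$. The map $p_3$ is the composite $E_3\to F\to B_3$, whose first factor $E_3\to F$ is a pullback of $p_1$ along $F\to P\to B_1$. So $p_3$ is not a pullback of $p_2$: its fibres are of the form $\coprod_{e\in p_2^{-1}(b)}p_1^{-1}(\phi(e))$ and genuinely involve the fibres of both $p_1$ and $p_2$ (your version would make $p_3$ independent of $p_1$, which is visibly false, e.g.\ when $p_2$ is an identity). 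The repair is the one the paper uses: discrete fibrations with finite fibres are closed under \emph{composition} as well as stable under pullback, so $p_3$, being a composite of a pullback of $p_1$ with a pullback of $p_2$, lies in the class. Your argument never invokes closure under composition, and without it the claim about $p_3$ does not go through.
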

\begin{proof}
We suppose that $(s_1,p_1,t_1)$ and $(s_2,p_2,t_2)$ are in $\mathfrak{E}_I$ and form their composite $(s_3,p_3,t_3)$ as in
\[ \xygraph{{I}="b1" [r] {E_1}="b2" [r] {B_1}="b3" [r] {I}="b4" [r] {E_2}="b5" [r] {B_2}="b6" [r] {I.}="b7" "b4" [u] {P}="p1" [u] {F}="dl" ([r(1.5)] {B_3}="dr", [l(1.5)] {E_3}="p2")
"b1":@{<-}"b2"_-{s_1}:"b3"_-{p_1}:"b4"_-{t_1}:@{<-}"b5"_-{s_2}:"b6"_-{p_2}:"b7"_-{t_2} "dl":"p1"_-{}(:"b3"_-{},:"b5"^-{}) "b2":@{<-}"p2"_-{}:"dl"_-{}:"dr"_-{}:"b6"^(.7){} "b1":@{<-}"p2"^-{s_3} "dr":"b7"^-{t_3} "p2":@/^{1pc}/"dr"^-{p_3}
"b3" [u(1.25)] {\scriptstyle{\tn{pb}}} "b5" [u(1.25)] {\scriptstyle{\tn{dpb}}} "b4" [u(.5)] {\scriptstyle{\tn{pb}}}} \]
The functors which are discrete fibrations, discrete opfibrations and have finite fibres are closed under composition and stable by pullback along arbitrary functors. Thus $p_3$ is such a functor since $p_1$ and $p_2$ are. By Lemma \ref{lem:ess-disc-inheritance} one deduces successively that $P$, $B_3$, $F$ and $E_3$ are all equivalent to discrete categories.
\end{proof}
We consider now a 2-cell $(\alpha_1,\alpha_2)$ which has a 1-section, and a $1$-cell $(q_1,q_2)$
\begin{equation}\label{diag:stable-local-coidentifier}
\begin{gathered}
\xygraph{!{0;(1.5,0):(0,1)::} {I}="p0" [ur] {E_3}="p1" [r] {B_3}="p2" [dr] {I}="p3" [l] {B_2}="p4" [l] {E_2}="p5" [d(.75)] {E_1}="p6" [r] {B_1}="p7" "p0":@{<-}"p1"^-{}:"p2"^-{p_3}:"p3"^-{}:@{<-}"p4"^-{}:@{<-}"p5"^-{p_2}:"p0"^-{}
"p1":@/_{1pc}/"p5"_(.5){}|(.5){}="dal" "p2":@/_{1pc}/"p4"_(.5){}|(.5){}="dar"
"p1":@/^{1pc}/"p5"^(.5){}|(.5){}="cal" "p2":@/^{1pc}/"p4"^(.5){}|(.5){}="car"
"dal":@{}"cal"|(.3){}="dl"|(.7){}="cl" "dl":@{=>}"cl"^-{\alpha_2}
"dar":@{}"car"|(.3){}="dr"|(.7){}="cr" "dr":@{=>}"cr"^-{\alpha_1}
"p0":@{<-}"p6"_-{}:"p7"_-{p_1}:"p3"_-{}
"p5":"p6"^-{q_2} "p4":"p7"^-{q_1}}
\end{gathered}
\end{equation}
in $\mathfrak{E}_I$ such that $(q_1,q_2)(\alpha_1,\alpha_2) = \id$.
\begin{lem}\label{lem:general-coidentifier-stability}
If in (\ref{diag:stable-local-coidentifier}) $q_1$ is the coidentifier in $\Cat$ of $\alpha_1$, then $(q_1,q_2)$ is a coidentifier of $(\alpha_1,\alpha_2)$ in $\Polyc{\Cat}(I,I)$, which is preserved by composition on either side by polynomials from $\mathfrak{E}_I$.
\end{lem}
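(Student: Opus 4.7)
The plan is to verify the universal property of $(q_1,q_2)$ as a coidentifier in $\Polyc{\Cat}(I,I)$, and then to extract monoidal stability by showing the hypothesis of the lemma persists after composition with elements of $\mathfrak{E}_I$.

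First I would argue that $q_2:E_2 \to E_1$ is itself the coidentifier of $\alpha_2$ in $\Cat$. Since $q_1$ is the coidentifier of $\alpha_1$ and $B_2$ is equivalent to a discrete category, Lemma \ref{lem:ess-disc-inheritance}(4) forces $B_1$ to be equivalent to a discrete category and $q_1$ to be surjective on objects and arrows. Since $(q_1,q_2)$ is a polynomial $1$-cell, the square with horizontals $q_2,q_1$ and verticals $p_2,p_1$ is a pullback, from which $q_2$ inherits surjectivity on objects and arrows, and $E_1$ is equivalent to a discrete category by Lemma \ref{lem:ess-disc-inheritance}(1). To check the universal property of $q_2$, given $g:E_2 \to G$ with $g\alpha_2 = \id$, two objects $(b,e_1),(b',e_1) \in E_2 \iso B_2 \times_{B_1} E_1$ sharing a $q_2$-image satisfy $q_1 b = q_1 b' = p_1 e_1$, so $b$ and $b'$ are connected by a zig-zag of $\alpha_1$-components in $B_2$; each such component lifts uniquely along the discrete fibration $p_3:E_3 \to B_3$ to an $\alpha_2$-component in $E_2$ situated over $e_1$, and $g\alpha_2 = \id$ forces $g$ to agree on endpoints, whence the induced $\bar g:E_1 \to G$ is well-defined. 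Functoriality is handled identically.

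Given a competitor $1$-cell $(f_1,f_2):(s_2,p_2,t_2) \to (s,p,t)$ with $(f_1,f_2)(\alpha_1,\alpha_2) = \id$, the coidentifier property of $q_1$ yields a unique $h_1:B_1 \to B$ with $h_1q_1 = f_1$, and that of $q_2$ yields a unique $h_2:E_1 \to E$ with $h_2q_2 = f_2$. The polynomial $1$-cell data on $(h_1,h_2)$, namely the pullback square $p \circ h_2 = h_1 \circ p_1$ together with the compatibilities with $s$ and $t$, descend from the analogous conditions on $(f_1,f_2)$ using that $q_1$ and $q_2$ are epimorphisms; the $2$-dimensional universal property is handled by the same descent argument applied to $2$-cells via the $2$-dimensional aspect of the coidentifier property of $q_1$ and $q_2$.

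For monoidal stability, composing diagram (\ref{diag:stable-local-coidentifier}) on either side by a polynomial $R \in \mathfrak{E}_I$ produces another diagram in $\mathfrak{E}_I$ by Lemma \ref{lem:composition-DI}, whose $B$-level data is obtained from $(q_1,\alpha_1)$ by pullback and distributivity pullback along functors associated with $R$. The plan is to show the composed $B$-level map is still a coidentifier in $\Cat$ of the composed $B$-level $2$-cell, for then the first two steps apply directly to the composite. Pullbacks along functors between categories equivalent to discrete preserve reflexive coidentifiers of such diagrams, since these reduce componentwise over connected components to left adjoints in $\Set$. For the distributivity pullback the explicit description of Lemma \ref{lem:Cat-dpb-along-dopfib-fam} recasts it as an ordinary pullback involving $\Fam$; because all input categories are equivalent to discrete, $\Fam$ applied to them agrees up to equivalence with $\Fam$ of the corresponding sets of connected components, reducing the problem to stability of reflexive coequalisers in $\Set$ under pullback along functions with finite fibres, which is routine. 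I expect this distributivity pullback step to be the main technical obstacle, precisely because distributivity pullbacks do not preserve reflexive coidentifiers in general; it is the $\mathfrak{E}_I$-hypothesis that makes the $\Fam$-theoretic reduction valid.
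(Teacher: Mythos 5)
Your overall architecture matches the paper's (show $q_2$ coidentifies $\alpha_2$, verify the universal property in $\Polyc{\Cat}(I,I)$, then handle composition), and your instinct that the distributivity pullback is the crux of stability is correct. But there are two genuine gaps. The first is in the universal property: having induced $h_1$ and $h_2$ from the coidentifier properties of $q_1$ and $q_2$, you assert that the polynomial $1$-cell data on $(h_1,h_2)$ — ``namely the pullback square'' — descends because $q_1$ and $q_2$ are epimorphisms. Epimorphy gives commutativity of the square $(h_2,f,h_1,p_1)$, but not that it is a pullback; the pasting lemma runs the wrong way (the left square and the composite being pullbacks does not force the right square to be one). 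This is exactly where the paper does its real work: it checks the pullback property by hand, on objects using surjectivity of $q_1$ on objects, the pullback $(q_2,p_1,q_1,p_2)$ and the joint monicity of the projections from the competitor's pullback square, and on arrows using that $E_1$ is equivalent to a discrete category. Without such an argument the induced $(h_1,h_2)$ is not known to be a $1$-cell of $\Polyc{\Cat}(I,I)$ at all. (Incidentally, your first step can be done in one line: the square $(q_2,p_1,q_1,p_2)$ exhibits $q_2,\alpha_2$ as $\Delta_{p_1}$ applied to $q_1,\alpha_1$ in the slices, and $\Delta_{p_1}$ preserves colimits since $p_1$ is exponentiable; no zig-zag analysis or lifting along $p_3$ is needed.)

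The second gap is that your stability argument only covers one side of composition. When $R$ is composed on the outside, the composite's $B$-level is $\PFun{\Cat}(R)$ applied to the original coidentifier, i.e.\ a pullback followed by a distributivity pullback along the \emph{fixed} map $p_R$; that is the situation your $\tn{Fam}$-reduction addresses, and it amounts to a hands-on special case of the sifted-colimit preservation of such polynomial functors (Theorem 4.5.1 of \cite{Weber-PolynomialFunctors}, which the paper simply cites). But when $R$ is composed on the inside, the distributivity pullback in the composite is taken along the \emph{varying} middle maps $p_3$, $p_2$, $p_1$ of the coidentifier diagram, not ``along functors associated with $R$'', so your description of how the composed $B$-level data is built is wrong for this case and the $\tn{Fam}$ argument does not apply. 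The paper handles this side by a different device: the inside-composed $B$-level is the evaluation of the endofunctor-level coidentifier at the fixed object $t_R : B_R \to I$ of $\Cat/I$, which is related to its value at the terminal object (namely the original $q_1$, $\alpha_1$) by pullback along $S(!)$; cartesianness of the transformations $d$, $c$, $k$ and exponentiability of $S(!)$ (which is a fibration and opfibration because $S$ is familial and opfamilial) then finish. Some argument of this second kind is needed for ``either side''.
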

\begin{proof}
Since $\Sigma_{B_1} : \Cat/B_1 \to \Cat$ creates colimits, one can regard $q$ and $\alpha_1$ as a coidentifier in $\Cat/B_1$. Then $r$ and $\alpha_2$ can be regarded as obtained from these by applying $\Delta_{p}$, and thus since $p$ is exponentiable, $r$ is the coidentifier of $\alpha_2$ in $\Cat/E_1$, and thus also in $\Cat$ since $\Sigma_{E_1}$ creates colimits.

To show that (\ref{diag:stable-local-coidentifier}) is a coidentifier in $\Polyc{\Cat}(I,I)$ it suffices to show that given $r_1$, $r_2$ and $f$ as in
\[ \xygraph{!{0;(1.5,0):(0,.6667)::}
{E_3}="p0" [r] {E_2}="p1" [r] {E_1}="p2" [r] {X}="p3" "p0":@<1.5ex>"p1"|(.4){}="pt"^-{}:"p2"_-{q_2}:@{.>}"p3"_-{s_2} "p0":@<-1.5ex>"p1"|(.4){}="pb"_-{} "p1":@/^{1pc}/"p3"^-{r_2}
"pt":@{}"pb"|(.25){}="pd2"|(.75){}="pc2" "pd2":@{=>}"pc2"^{\alpha_2}
"p0" [d] {B_3}="q0" [r] {B_2}="q1" [r] {B_1}="q2" [r] {Y}="q3" "q0":@<1.5ex>"q1"|(.4){}="qt"^-{}:"q2"^-{q_1}:@{.>}"q3"^-{s_1} "q0":@<-1.5ex>"q1"|(.4){}="qb"_-{} "q1":@/_{1pc}/"q3"_-{r_1}
"qt":@{}"qb"|(.25){}="qd2"|(.75){}="qc2" "qd2":@{=>}"qc2"^{\alpha_1}
"p0":"q0"_-{p_3} "p1":"q1"^-{p_2} "p2":"q2"^-{p_1} "p3":"q3"^-{f}} \]
making the square $(r_2,f,r_1,p_2)$ a pullback, then the induced square $(s_2,f,s_1,p_1)$ is also a pullback. To check that the induced square is a pullback on objects, consider $b \in B_1$ and $x \in X$ such that $s_1b = fx$. Since $q_1$ is surjective on objects, one has $b_2$ such that $q_1b_2 = b$, and so there is $e_2 \in E_2$ unique such that $p_2e_2 = b_2$ and $r_2e_2 = x$. Thus $e = q_2e_2$ satisfies $p_1e = b$ and $s_2e = x$. To see that $e$ is unique with these properties, consider $e' \in E_1$ such that $p_1e' = b$ and $s_2e' = x$. By the pullback $(q_2,p_1,q_1,p_2)$ there is $e'_2 \in E_2$ unique such that $p_2e'_2 = b_2$ and $q_2e'_2 = e'$. But then $r_2e_2 = r_2e'_2$ and $p_2e_2 = p_2e'_2$, and so since $(p_2,r_2)$ are jointly monic $e_2 = e'_2$, and so $e = e'$. To check that the induced square is a pullback on arrows, given arrows $\beta$ in $B_1$ and $\alpha$ in $X$, one establishes the existence of an arrow $\gamma$ in $E_1$ such that $p_1\gamma = \beta$ and $s_2\gamma = \alpha$ as in the objects case, and the uniqueness follows since $E_1$ is equivalent to a discrete category.

It remains to show that taking a situation (\ref{diag:stable-local-coidentifier}) in which $q_1$ is a coidentifier of $\alpha_1$ in $\Cat$, and either pre or post-composing it with a polynomial from $\mathfrak{E}_I$, gives another such situation. Let $P \in \mathfrak{E}_I$ and let the diagram on the left
\[ \xygraph{{\xybox{\xygraph{!{0;(1.25,0):(0,1)::} {I}="p0" [ur] {E'_3}="p1" [r] {B'_3}="p2" [dr] {I}="p3" [l] {B'_2}="p4" [l] {E'_2}="p5" [d(.75)] {E'_1}="p6" [r] {B'_1}="p7" "p0":@{<-}"p1"^-{}:"p2"^-{p'_3}:"p3"^-{}:@{<-}"p4"^-{}:@{<-}"p5"^-{p'_2}:"p0"^-{}
"p1":@/_{1pc}/"p5"_(.5){}|(.5){}="dal" "p2":@/_{1pc}/"p4"_(.5){}|(.5){}="dar"
"p1":@/^{1pc}/"p5"^(.5){}|(.5){}="cal" "p2":@/^{1pc}/"p4"^(.5){}|(.5){}="car"
"dal":@{}"cal"|(.3){}="dl"|(.7){}="cl" "dl":@{=>}"cl"^-{\alpha'_2}
"dar":@{}"car"|(.3){}="dr"|(.7){}="cr" "dr":@{=>}"cr"^-{\alpha'_1}
"p0":@{<-}"p6"_-{}:"p7"_-{p'_1}:"p3"_-{}
"p5":"p6"^-{q'_2} "p4":"p7"^-{q'_1}}}}
[r(5)]
{\xybox{\xygraph{!{0;(1.25,0):(0,1)::} {I}="p0" [ur] {E''_3}="p1" [r] {B''_3}="p2" [dr] {I}="p3" [l] {B''_2}="p4" [l] {E''_2}="p5" [d(.75)] {E''_1}="p6" [r] {B''_1}="p7" "p0":@{<-}"p1"^-{}:"p2"^-{p''_3}:"p3"^-{}:@{<-}"p4"^-{}:@{<-}"p5"^-{p''_2}:"p0"^-{}
"p1":@/_{1pc}/"p5"_(.5){}|(.5){}="dal" "p2":@/_{1pc}/"p4"_(.5){}|(.5){}="dar"
"p1":@/^{1pc}/"p5"^(.5){}|(.5){}="cal" "p2":@/^{1pc}/"p4"^(.5){}|(.5){}="car"
"dal":@{}"cal"|(.3){}="dl"|(.7){}="cl" "dl":@{=>}"cl"^-{\alpha''_2}
"dar":@{}"car"|(.3){}="dr"|(.7){}="cr" "dr":@{=>}"cr"^-{\alpha''_1}
"p0":@{<-}"p6"_-{}:"p7"_-{p''_1}:"p3"_-{}
"p5":"p6"^-{q''_2} "p4":"p7"^-{q''_1}}}}} \]
denote the result of post composing (\ref{diag:stable-local-coidentifier}) by $P$, and let the diagram on the right denote the result of pre composing (\ref{diag:stable-local-coidentifier}) by $P$. Our task is to show that in $\Cat$, $q'_1$ is a coidentifier of $\alpha'_1$ and $q''_1$ is a coidentifier of $\alpha''_1$. In the case of $q'_1$ note that $\alpha'_1 = T\alpha_1$ and $q'_1 = Tq_1$ where $T = \PFun{\Cat}_{I,I}(P)$. By \cite{Weber-PolynomialFunctors} Theorem 4.5.1, $T$ preserves sifted colimits and so $q'_1$ is indeed the coidentifier of $\alpha'_1$. In the case of $q''_1$ if one denotes by
\begin{equation}\label{eq:coid-endos-for-proof-of-main-lemma}
\begin{gathered}
\xygraph{!{0;(1.5,0):(0,1)::} {Q}="p0" [r] {R}="p1" [r] {S}="p2" "p0":@<1.5ex>"p1"|(.45){}="t"^-{d}:"p2"^-{k} "p0":@<-1.5ex>"p1"|(.45){}="b"_-{c} "t":@{}"b"|(.15){}="d"|(.85){}="c" "d":@{=>}"c"^-{\zeta}}
\end{gathered}
\end{equation}
the diagram in $\tn{End}(\Cat/I)$ which is the result of applying $\PFun{\Cat}_{I,I}$ to (\ref{diag:stable-local-coidentifier}), and by $X \to I$ the third map comprising the polynomial $P$, then $\alpha''_1$ and $q''_1$ are the result of evaluating (\ref{eq:coid-endos-for-proof-of-main-lemma}) at $X \in \Cat/I$. Note in particular, that when $X = 1$ this will be $\alpha_1$ and $q_1$, with $q_1$ a coidentifier of $\alpha_1$ by hypothesis. In the general case note that one has
\[ \xygraph{!{0;(1.75,0):(0,.6667)::}
{QX}="p0" [r] {RX}="p1" [r] {SX}="p2"
"p0":@<1.5ex>"p1"|(.4){}="pt"^-{d_X}:"p2"^-{k_X} "p0":@<-1.5ex>"p1"|(.4){}="pb"_-{c_X}
"pt":@{}"pb"|(.25){}="pd2"|(.75){}="pc2" "pd2":@{=>}"pc2"^{\zeta_X}
"p0" [d] {Q1}="q0" [r] {R1}="q1" [r] {S1.}="q2" "q0":@<1.5ex>"q1"|(.4){}="qt"^-{d_1}:"q2"_-{k_1} "q0":@<-1.5ex>"q1"|(.4){}="qb"_-{c_1} 
"qt":@{}"qb"|(.25){}="qd2"|(.75){}="qc2" "qd2":@{=>}"qc2"^{\zeta_1}
"p0":"q0"_-{Q(!)} "p1":"q1"^-{R(!)} "p2":"q2"^-{S(!)}} \]
One can regard $k_X$ and $\zeta_X$ as living in $\Cat/SX$, and since $\Sigma_{SX} : \Cat/SX \to \Cat$ creates colimits, $k_X$ coidentifies $\zeta_X$ in $\Cat/SX$ iff it does so in $\Cat$. Similarly $k_1$ may be regarded as a coidentifier of $\zeta_1$ in $\Cat/S1$. Since the 2-natural transformations $d$, $c$ and $k$ are cartesian, one may regard the top row as being obtained from the bottom by pulling back along $S(!)$. Since $S$ is familial and opfamilial by \cite{Weber-PolynomialFunctors} Theorem 4.4.5, it preserves fibrations and opfibrations, whence $S(!)$ is a fibration and opfibration, and so is exponentiable. Thus $\Delta_{S(!)}$ preserves colimits, and so $\zeta_X = \alpha''_1$ is indeed coidentified by $k_X = q''_1$.
\end{proof}
\begin{proof}
(\emph{of Lemma \ref{lem:monoidally-stable-local-coidentifier}}).
The situation (\ref{diag:local-coidentifier-in-Poly}) clearly conforms to that of (\ref{diag:stable-local-coidentifier}), and by definition $q_{B_T}$ is the coidentifier of $\alpha_{B_T}$ in $\Cat$. To show that (\ref{diag:local-coidentifier-in-Poly}) is a monoidally stable coidentifier, one must show that it is a coidentifier in $\Polyc{\Cat}(I,I)$ which is preserved by pre or post composition with composites of
\[ \xygraph{*{\xybox{\xygraph{!{0;(1.25,0):(0,.5)::} {I}="p0" [r] {E_T}="p1" [r] {B_T}="p2" [r] {I}="p3" "p0":@{<-}"p1"^-{s_T}:"p2"^-{p_T}:"p3"^-{t_T}}}}
[r(6)]
*!(0,-.05){\xybox{\xygraph{!{0;(1.25,0):(0,.5)::} {I}="p0" [r] {E^{[1]}_T}="p1" [r] {B^{[1]}_T}="p2" [r] {I.}="p3" "p0":@{<-}"p1"^-{s^{[1]}_T}:"p2"^-{p^{[1]}_T}:"p3"^-{t^{[1]}_T}}}}} \]
Since these are in $\mathfrak{E}_I$ the result follows by Lemmas \ref{lem:composition-DI} and \ref{lem:general-coidentifier-stability}.
\end{proof}
%


\end{document}